\newtheorem{theorem}{Theorem}[section]
\newtheorem{corollary}[theorem]{Corollary}
\newtheorem{lemma}[theorem]{Lemma}
\newtheorem{proposition}[theorem]{Proposition}
\theoremstyle{definition}
\newtheorem{definition}[theorem]{Definition}
\newtheorem{remark}[theorem]{Remark}
\def\tangle#1#2{\left[\frac{#1}{#2}\right]}
\title[Embeddings of 2-string tangles]{On embeddings of $2$-string tangles into the unknot, the unlink and split links}
\author[J. M. Nogueira and A. Salgueiro]{Jo\~ao M. Nogueira and Ant\'onio Salgueiro}
\thanks{This work was partially supported by the Centre for Mathematics of the University of Coimbra - UIDB/00324/2020, funded by the Portuguese Government through FCT/MCTES}
\begin{document}

\begin{abstract}
In this paper we study further when tangles embed into the unknot, the unlink or a split link. In particular, we study obstructions to these properties through geometric characterizations, tangle sums and colorings. As an application we determine when each prime 2-string tangle with up to seven crossings embeds into the unknot, the unlink or a split link. 
\end{abstract}

\maketitle

\section{Introduction}

A {\em tangle} $\mathcal{T}$ is a pair $(B, \sigma)$ formed by a ball $B$ and a collection of properly embedded disjoint arcs $\sigma$ in $B$. If $\sigma$ has $n$ components we say that $\mathcal{T}$ is a $n$-string tangle.
Two tangles $\mathcal{T}$, $\mathcal{T}'$ are {\em equivalent}, denoted by $\mathcal{T}\approx\mathcal{T}'$, if there is an isotopy of $B$ sending $\mathcal{T}$ to $\mathcal{T}'$, and {\em strongly equivalent}, denoted by $\mathcal{T}=\mathcal{T}'$, if this isotopy fixes $\partial B$. For example, all rational $2$-string tangles are equivalent, but they are not strongly equivalent.

Let $K$ be a link in $S^3$, and $B$ a ball in $S^3$ with exterior $B'$. If $\mathcal{T}=(B,B\cap K)$ and $\mathcal{T}'=(B',B'\cap K)$ are tangles, then we say that $\mathcal{T}\cup\mathcal{T}'$ is a {\em tangle decomposition} of $K$ and that $K$ is a {\em closure} of $\mathcal{T}$ (and of $\mathcal{T}'$). In case there is a tangle decomposition of $K$ with $\mathcal{T}$ one of the tangle components, we also say that $\mathcal{T}$ embeds into the pair $(S^3, K)$, or, for abbreviation, that it embeds into  $K$. Similarly, let $\mathcal{T}_1=(B_1,\sigma_1)$ and $\mathcal{T}_2=(B_2,\sigma_2)$ be tangles such that $B_1\cap B_2$ is a disk and $\mathcal{T}=(B_1\cup B_2,\sigma_1\cup \sigma_2)$ is a tangle. Then we say that $\mathcal{T}_1\cup \mathcal{T}_2$ is a {\em tangle decomposition} of  $\mathcal{T}$.\\

A fundamental question in knot theory is determining whether a knot or link is actually the unknot (resp., an unlink or a split link). In this paper, we continue the study of when a tangle embeds or not into the unknot, the unlink or a split link. If a tangle $\mathcal{T}$ embeds into the unknot, the unlink, or a split link, we say that $\mathcal{T}$ is {\em unknottable},  {\em unlinkable} or {\em splittable}. We refer to a tangle $\mathcal{U}$ such that $\mathcal{T}\cup\, \mathcal{U}$ is a tangle decomposition of the unknot, the unlink or of a split link, respectively, as an {\em unknotting}, {\em unlinking} or {\em splitting} {\em closure tangle} of $\mathcal{T}$.

A {\em projection} of a tangle $\mathcal{T}$ is the image $p(\mathcal{T})$ of the tangle by an orthogonal projection $p$ to a plane such that the preimage of each point of $p(\mathcal{T})$ has at most two points, and there are a finite number of double points, which are called the {\em crossings} of the projection. A projection always exists in the piecewise linear category. 
If these crossings are decorated with broken lines to show the overcrosses and undercrosses, then we get a {\em diagram} of $\mathcal{T}$. 
A tangle diagram $\mathcal{D}$ is called {\em unknottable}, {\em unlinkable} or {\em splittable}  if there is a diagram of the unknot, the unlink or a split link, respectively, that  contains $\mathcal{D}$. (See Figure \ref{figure:51ab} for an example of an unknottable tangle diagram.)

\begin{figure}[ht]
	\centering
	\includegraphics[width=0.5\textwidth]{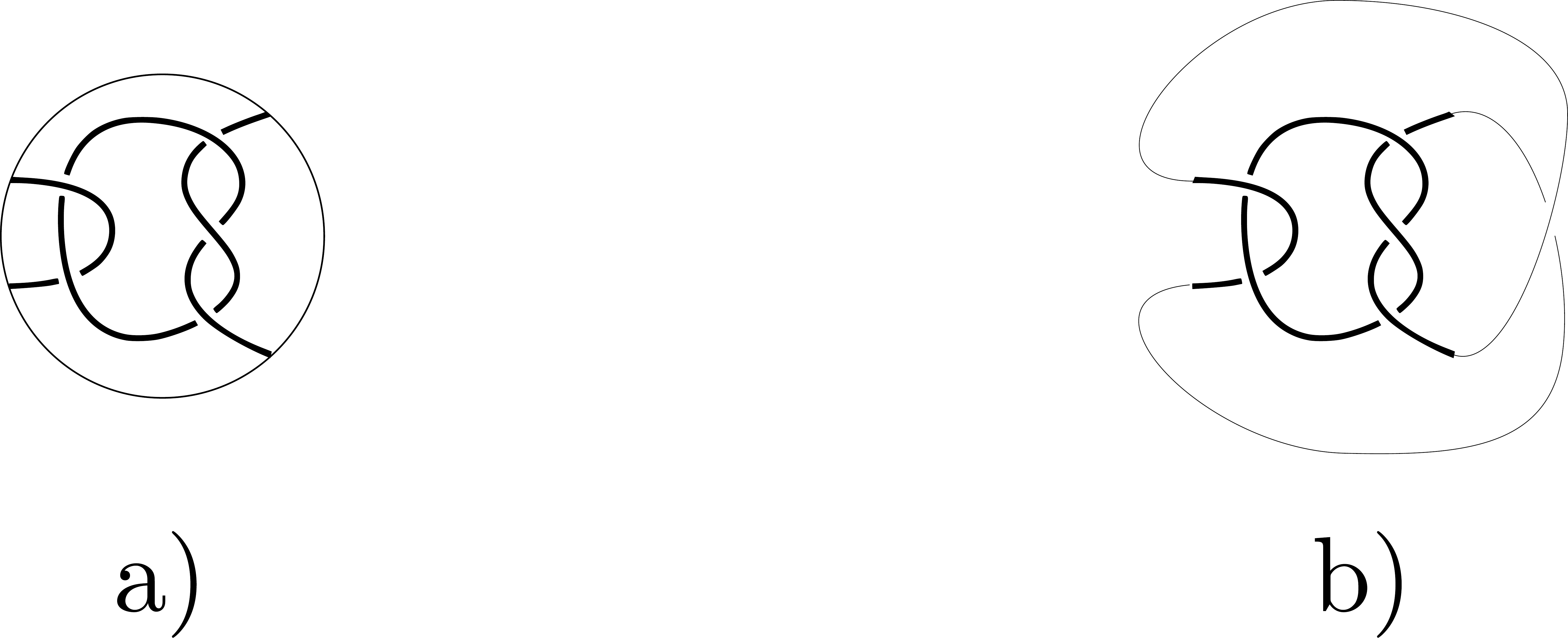} 
	\caption{The diagram a) is unknottable since it is contained on the diagram b) of the unknot.}
	\label{figure:51ab}
\end{figure}

The study of embeddings of tangles into the unknot, the unlink or split links has been considered before in several papers. In \cite{K}, Krebes shows that the greatest common divisor of the determinant of the numerator and denominator closure of a 2-string tangle divides the determinant of any knot or link obtained by closing the tangle, and is able to show several tangles not to be unknottable. This work uses a combinatorial interpretation of the determinant in terms of link diagrams and Kauffman brackets. In \cite{SW-18}, Silver and Williams gave a new shorter proof of this result. In \cite{KSW-99}, this approach is generalized to $n$-string tangles with certain bracket-derived invariants of link diagrams.  In \cite{SW-99}, Silver and Williams extend Krebes' result using Fox colorings. More recently, Kauffman and Lopes \cite{KL-20} use colorings of knot diagrams with involutory quandles to study how to obtain tangles that are not unknottable. In \cite{R-00}, Ruberman extends the work of Krebes following a homological interpretation of the determinant of a link as the order of the first homology of the 2-fold branched cover of $S^3$ over the link; and also introduces an obstruction for a 2-string tangle to be unlinkable, through an application of work of Cochran and Ruberman \cite{CR-89} on tangle invariants from higher order linking numbers. In \cite{PSW-05}, Przytycki, Silver and Williams extend the work of Krebes and of Ruberman to $n$-string tangles and use the Jones polynomial instead of just the determinant. With this work, they are able to show an example of a 2-string tangle from  \cite{K} that is not unknottable and unlinkable and which is not possible to prove using the determinant result of \cite{K}. In the present paper, we are also able to show that this tangle is not unknottable and unlinkable with a different approach.\\

For 2-string tangles in particular, more can be said from the literature. A 2-string tangle $(B, \sigma)$ is called {\em essential} if its strings cannot be separated by a disk properly embedded in $B$ and {\em inessential} otherwise. As observed in section \ref{section: algebraic properties}, a 2-string tangle is unknottable (resp., unlinkable) if and only if the unknotting (resp., unlinking) closure tangle is a rational tangle, and it is splittable if and only if it has a rational tangle as a splitting closure tangle. Under these circumstances, with respect to strong equivalence, the unknotting closure tangle  of an unknottable 2-string essential tangle is unique \cite{BS-86,BS-88}, and the unlinking (resp. rational splitting) closure tangle of an unlinkable (resp., splittable) 2-string tangle is unique \cite{EM-88}. Furthermore, in case the 2-string tangle is essential, it cannot be unknottable and splittable (or unlinkable) simultaneously \cite{S-85}. New proofs of these results were also obtained by Taylor in \cite{T-08}. For the case of a rational tangle it is possible to determine exactly which rational tangles are its unknotting closure tangles, as seen in the work of Ernst and Sumners \cite{ES-90} and of Kauffman and Lambropoulou in \cite{KL-11}.\\

This paper is organized as follows. In Section \ref{section: properties}, we observe some  fundamental basic properties on embeddings of tangles into the unknot, the unlink and a split link. We also observe that Conjecture 3.1 of \cite{KL-20} by Kauffman and Lopes is false. In Section \ref{section:geometric}, we give a geometric characterization for a 2-string tangle to be unknottable, unlinkable or splittable. In Section \ref{section: algebraic properties}, we study the behavior of unknottability, unlinkability and unsplittability  under the sum of tangles. In particular, we determine when a Montesinos tangle is unknottable, unlinkable or splittable. In Section \ref{section:colorings}, we study further when the coloring invariants are an obstruction for a tangle being unknottable. We also apply these invariants on the study of unlinkable tangles for the first time. In particular, we determine the unlinking closure tangle candidate for any 2-string tangle. In Section \ref{section:tables}, we determine which tangles of the table classifying all 2-string tangles up to 7 crossings (table 1 in \cite{KSS}) are unknottable, unlinkable or unsplittable. We refer to this table throughout the paper.

\section{Basic properties}\label{section: properties}

In this section, we observe basic properties of unknottable, unlinkable and splittable tangles.
\begin{theorem}\label{theorem:equivalence} Let $\mathcal T$ be a tangle. The following are equivalent:
	\begin{enumerate}
		\item for every tangle $\mathcal T'$ (strongly) equivalent to $\mathcal T$, every diagram $\mathcal D'$ of $\mathcal T'$ is unknottable;
		\item there is a tangle $\mathcal T'$ (strongly) equivalent to $\mathcal T$ that has a unknottable diagram;		
		\item $\mathcal T$ is unknottable.
	\end{enumerate}
	A similar result holds for unlinkable/splittable tangles.
\end{theorem}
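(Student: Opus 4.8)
The plan is to prove the cycle of implications $(3)\Rightarrow(1)\Rightarrow(2)\Rightarrow(3)$, isolating two facts that are reused throughout: that unknottability is invariant under (strong) equivalence, and that a tangle is unknottable if and only if each one of its diagrams is unknottable. The implication $(1)\Rightarrow(2)$ is immediate, since a projection of $\mathcal T$ always exists, so taking $\mathcal T'=\mathcal T$ together with any one of its diagrams produces the desired unknottable diagram.

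For the invariance under equivalence, suppose $\mathcal T'=(B,\sigma')$ is equivalent to $\mathcal T=(B,\sigma)$ and unknottable, with $\mathcal T'\cup\mathcal U$ a tangle decomposition of the unknot, $\mathcal U=(B',\tau)$. In the strong case the equivalence is realized by a homeomorphism of $B$ fixing $\partial B$ pointwise, which extends by the identity over $B'$ to a homeomorphism of $S^3$ carrying $\sigma'\cup\tau$ to $\sigma\cup\tau$; hence $\mathcal U$ is also an unknotting closure tangle of $\mathcal T$. For ordinary equivalence the homeomorphism of $B$ moves $\partial B$, but its restriction to $\partial B$ extends over $B'$ (by coning, i.e.\ the Alexander trick), producing a homeomorphism of $S^3$ that sends the unknot $\sigma'\cup\tau$ to $\sigma$ together with the image of $\tau$; since homeomorphisms of $S^3$ preserve the unknot, $\mathcal T$ is again unknottable. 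The same argument applies verbatim to the unlink and to split links, as these are properties stable under homeomorphisms of $S^3$.

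The heart of the argument is the equivalence that $\mathcal T'$ is unknottable if and only if every diagram of $\mathcal T'$ is unknottable, which yields both $(2)\Rightarrow(3)$ and the diagram half of $(3)\Rightarrow(1)$. One direction is formal: if a diagram $\mathcal D'$ of $\mathcal T'$ is contained in a diagram $D$ of the unknot, then the complement of $\mathcal D'$ in $D$ is a diagram of a closure tangle $\mathcal U$ and $\mathcal T'\cup\mathcal U$ decomposes the unknot, so $\mathcal T'$ is unknottable; combined with the invariance above, this gives $(2)\Rightarrow(3)$. For the converse, fix an unknotting closure tangle $\mathcal U$ and choose a diagram of the unknot $\mathcal T'\cup\mathcal U$ in which the projection of $\partial B$ is an embedded circle $C$ meeting the four strands transversally, so that inside $C$ one reads off a diagram $\mathcal D_0$ of $\mathcal T'$ and outside $C$ a diagram of $\mathcal U$. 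The main obstacle is that the given diagram $\mathcal D'$ need not coincide with $\mathcal D_0$, as two diagrams of the same tangle may look quite different. This is resolved by the Reidemeister calculus for tangles: $\mathcal D'$ and $\mathcal D_0$ represent the same tangle with the same four boundary points, hence differ by a finite sequence of planar isotopies and Reidemeister moves supported in the interior of $C$. Performing these moves on $\mathcal D_0$ inside $C$ leaves the exterior diagram of $\mathcal U$ untouched and does not change the link type, so the result is a diagram of the unknot containing $\mathcal D'$; thus $\mathcal D'$ is unknottable.

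Finally, $(3)\Rightarrow(1)$ assembles these pieces: given $\mathcal T$ unknottable and any $\mathcal T'$ (strongly) equivalent to $\mathcal T$, the invariance under equivalence shows $\mathcal T'$ is unknottable, and the converse direction just established shows that every diagram $\mathcal D'$ of $\mathcal T'$ is unknottable, which is precisely $(1)$. Since neither the invariance argument nor the Reidemeister argument used any property of the unknot beyond its being a fixed link type stable under $S^3$-homeomorphisms and Reidemeister moves, the identical reasoning proves the statement for unlinkable and splittable tangles.
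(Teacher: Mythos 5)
Most of your argument tracks the paper's: the same cycle of implications, the extension of the equivalence over the exterior ball (you use the Alexander trick where the paper conjugates the isotopy by a symmetry swapping $B$ and $B'$; both are fine), and the formal direction that an unknottable diagram exhibits an unknotting closure tangle. The gap is in the step you call the heart of the argument, namely the claim that the given diagram $\mathcal D'$ and the diagram $\mathcal D_0$ read off inside the circle $C$ ``differ by a finite sequence of planar isotopies and Reidemeister moves supported in the interior of $C$.'' This is false in general. Two diagrams of one and the same tangle arise from two different projections, and changing the projection amounts to rotating the ball $B$; that is an equivalence of tangles but not a strong equivalence, and in the resulting movie of diagrams the projected endpoints of the strings slide over and under strands --- events that are not interior Reidemeister moves and that change the rel-boundary class of the diagram. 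Concretely, let $\mathcal T'$ be the integral tangle $[2]$. Its standard projection is the horizontal-twist diagram, whose numerator closure is the Hopf link; rotating the projection by $90^\circ$ about the projection axis yields the vertical-twist picture, again a diagram of $\mathcal T'$ with endpoints at the four intercardinal positions, but its numerator closure is the unknot. Since the link type obtained by gluing a fixed exterior diagram is invariant under planar isotopies and Reidemeister moves supported inside $C$, these two diagrams of $\mathcal T'$ cannot be related by such moves. So your gluing step, which keeps the exterior diagram of $\mathcal U$ untouched, breaks down exactly where it is needed: the rel-boundary Reidemeister theorem for tangles classifies diagrams up to strong equivalence with fixed boundary, not the set of all projections of a fixed tangle.

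The way out is not to hold the exterior diagram fixed, and this is what the paper does in (c)$\Rightarrow$(a): given the projection $p'$ that realizes $\mathcal D'$, perturb the closure tangle $\lambda(\mathcal U)$ inside the exterior ball, rel $\partial B$, so that $p'$ is generic on the whole link $\mathcal T'\cup\lambda(\mathcal U)$; then the image of this link under $p'$ is a diagram of the unknot containing $\mathcal D'$ as a subdiagram. (Note that ``contains'' does not require the exterior strands to project outside the disk $p'(B)$; they may pass over or under the region of $\mathcal D'$.) With the exterior projected along with $\mathcal T'$ rather than frozen, the rest of your proof --- invariance under (strong) equivalence, the formal direction, and the remark that only stability of the unknot, unlink and split links under homeomorphisms of $S^3$ is used --- goes through.
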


\begin{proof}
	The implication (a)$\Rightarrow$(b) is immediate, by considering any diagram of $\mathcal T$. 
	
	To prove (b)$\Rightarrow$(c), consider an unknottable diagram $\mathcal D'$ of $\mathcal T'$. Then there is a diagram $\mathcal D$ of the unknot $K$ that contains $\mathcal D'$. Consider an isotopy $\lambda$ of $B$ sending $\mathcal T'$ to $\mathcal T$. Extend $\lambda$ to $S^3$ by defining it on the exterior $B'$ of $B$ as the conjugate of $\lambda$ by a symmetry interchanging $B$ and $B'$. Then $\lambda(K)$ is the unknot with a decomposition $\mathcal{T}\cup\mathcal{U}$. Therefore, $\mathcal T$ is unknottable.
	
	Finally, to prove (c)$\Rightarrow$(a), suppose that $\mathcal T$ is unknottable. Then there is a tangle $\mathcal{U}$ such that $\mathcal{T}\cup\mathcal{U}$ is a tangle decomposition of the unknot. Consider an isotopy $\lambda$ of $B$  sending $\mathcal T$ to $\mathcal T'$ (and its extension to $S^3$) and a diagram $\mathcal D'$ of $\mathcal T'$. Then $\lambda(\mathcal{T})\cup\lambda(\mathcal{U})$ is a tangle decomposition of the unknot that has a diagram containing $\mathcal D'$. Therefore $\mathcal D'$ is unknottable.
\end{proof}

We say that the tangle $\mathcal{T}=\mathcal{T}_1\cup \mathcal{T}_2$ is the {\em union} of the tangles $\mathcal{T}_1=(B_1,\sigma_1)$ and $\mathcal{T}_2=(B_2,\sigma_2)$ if $B_1\cap B_2$ is a disk disjoint from  $\sigma_1\cup\sigma_2$. If a tangle $\mathcal{T}_1=(B_1,\sigma_1)$ is embedded in a tangle $\mathcal{T}=(B,\sigma)$, we say that $\mathcal{T}_1$ is a {\em subtangle} of $\mathcal{T}$ (See Figure \ref{figure:decompostion_and_union}.)

\begin{figure}[ht]
	\centering
	\includegraphics[scale=.1]{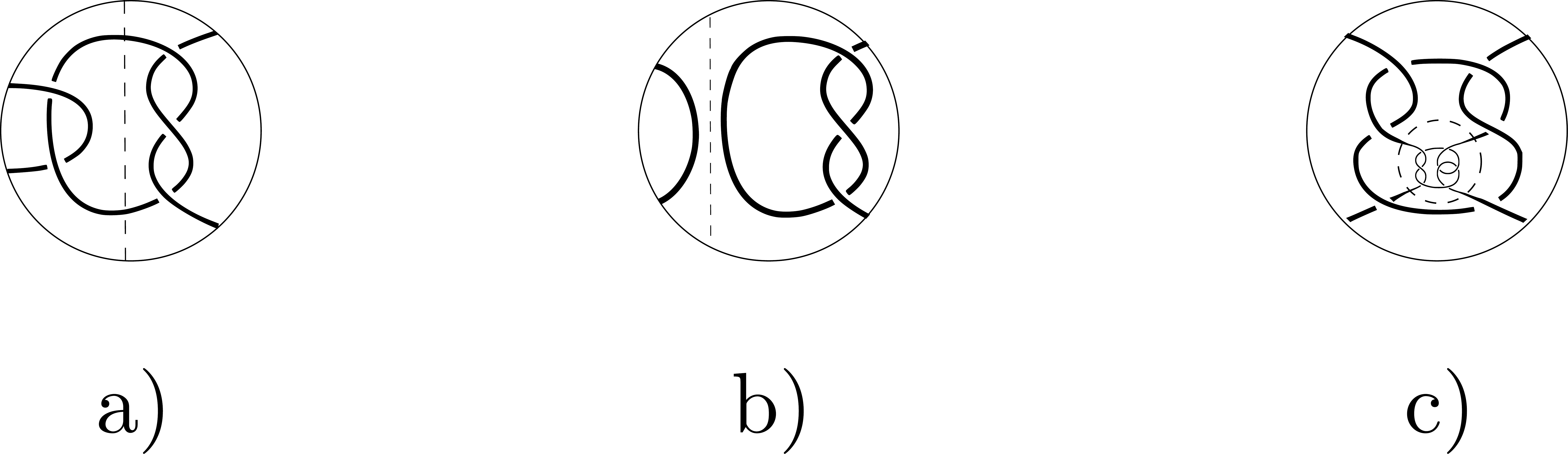} 
	\caption{a) A decomposition of tangles; b) an union of tangles; c) a subtangle.}
	\label{figure:decompostion_and_union}
\end{figure}

\begin{theorem}\label{theorem:subtangle}
	Let $\mathcal{T}_1$ be a subtangle of $\mathcal{T}$. \begin{enumerate}
		\item If $\mathcal{T}$ is unknottable, then  $\mathcal{T}_1$ is unknottable.
		\item If $\mathcal{T}$ is unlinkable, then  $\mathcal{T}_1$ is unlinkable if it intersects more than one string of $\mathcal{T}$; otherwise $\mathcal{T}_1$ is unknottable.
		\item If $\mathcal{T}$ is splittable and $\mathcal{T}_1$ intersects two strings of $\mathcal{T}$ separated by the splitting decomposition, then  $\mathcal{T}_1$ is splittable.
	\end{enumerate}
\end{theorem}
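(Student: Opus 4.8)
The plan is to exploit the fact that a subtangle automatically inherits a closure from the ambient tangle, and then to adjust that inherited closure using the geometry of the unlink or split link.

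First I would set up the core observation. Write $\mathcal{T}=(B,\sigma)$ and suppose $\mathcal{T}\cup\mathcal{U}$ is a tangle decomposition of a link $L$, where $\mathcal{U}=(B',\sigma')$ and $B'=\overline{S^3\setminus B}$. Since $\mathcal{T}_1=(B_1,\sigma_1)$ is a subtangle, $B_1\subseteq B$ and $\sigma_1=\sigma\cap B_1=L\cap B_1$. Setting $B_1'=\overline{S^3\setminus B_1}$ and $\mathcal{U}_1=(B_1',L\cap B_1')$, we obtain that $\mathcal{T}_1\cup\mathcal{U}_1$ is a tangle decomposition of the same link $L$; that is, $\mathcal{T}_1$ embeds into $L$. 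For part (a) this finishes the argument at once: if $L$ is the unknot, then $\mathcal{T}_1$ embeds into the unknot and is unknottable.

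For parts (b) and (c) the inherited closure $L$ is already an unlink or split link, so the content is to match the \emph{type} of closure to the hypothesis on the strings, which I would do by tracking on which components (resp.\ sides) of $L$ the arcs of $\sigma_1$ lie. For (b), I would first record that in an unlinking decomposition distinct strings of $\mathcal{T}$ close up into distinct components of the unlink $L$; for a $2$-string tangle this is forced, since closing two tangle arcs with two rational-closure arcs into a two-component link pairs each tangle arc with a single closure arc. Hence, if $\mathcal{T}_1$ meets more than one string of $\mathcal{T}$, then $\sigma_1$ meets at least two components of $L$, and discarding the components disjoint from $B_1$ leaves a sub-unlink with at least two components into which $\mathcal{T}_1$ still embeds, so $\mathcal{T}_1$ is unlinkable. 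If instead $\mathcal{T}_1$ meets a single string, then $\sigma_1$ lies on a single component $K$ of $L$; discarding all other components leaves $\mathcal{T}_1$ embedded in the unknot $K$, so $\mathcal{T}_1$ is unknottable. For (c), the link $L$ is split by a sphere $S$, and the hypothesis gives two strings of $\mathcal{T}$ on opposite sides of $S$ that are both met by $\mathcal{T}_1$; then $\mathcal{T}_1$ has two strings lying on opposite sides of $S$, so $S$ separates the strings of $\mathcal{T}_1$ while remaining disjoint from $L=\mathcal{T}_1\cup\mathcal{U}_1$, witnessing $\mathcal{T}_1$ as splittable.

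The main obstacle is the ``discarding components'' step in (b), i.e.\ converting the inherited unlink closure into a smaller unlink or into the unknot, which requires producing an actual new closure tangle rather than merely deleting circles from a diagram. The point to verify carefully is that the components of $L$ not meeting $B_1$ can be absorbed into the complementary tangle. Since $L$ is an unlink, these components are mutually split unknots disjoint from the compact set $B_1\cup(\text{components met by }\sigma_1)$; using the splitting spheres of the unlink I would isotope them into a ball disjoint from $B_1$ and from the met components, after which the complementary region provides a ball $\widetilde{B}\supseteq B_1$ with $\widetilde{B}\cap L$ equal to exactly the met components. This $\widetilde{B}$ realizes the desired (sub)closure of $\mathcal{T}_1$, and the analogous isotopy relative to $S$ is what keeps $S$ a genuine splitting sphere after restricting to $B_1$ in part (c).
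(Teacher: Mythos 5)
Your proposal is correct and is essentially the paper's own argument: the paper's entire proof consists of the single observation that $\mathcal{T}_1$ together with the exterior tangle $\mathcal{T}'\cup(\mathcal{T}-\mathcal{T}_1)$, i.e.\ the pair $\left(\overline{S^3\setminus B_1},\,K\cap\overline{S^3\setminus B_1}\right)$, is again a tangle decomposition of the closure $K$, from which the three cases are read off exactly as you do. The one place where you work harder than the paper --- the ``discarding components'' step in (b) and (c) --- in fact needs no isotopy at all: replacing $K$ by the sublink $K_0$ of components meeting $B_1$ (a sublink of an unlink is an unlink, and under the hypothesis of (c) the sublink is still split by the same sphere), the complementary pair $\left(\overline{S^3\setminus B_1},\,K_0\cap\overline{S^3\setminus B_1}\right)$ automatically has no closed components and is therefore a genuine closure tangle, so $B_1$ itself already exhibits the embedding; the paper elides this point silently, and your version, once simplified this way, is the more careful of the two.
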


\begin{proof}
	If $\mathcal{T}\cup\mathcal{T'}$ is a tangle decomposition of a link $K$, then  $\mathcal{T}_1\cup \left(\mathcal{T}'\cup (\mathcal{T}-\mathcal{T}_1)\vphantom{2^{2^2}}\right)$ is also a tangle decomposition of $K$.
	
	If $K$ is an unknot, then $\mathcal{T}_1$ is unknottable; if $K$ is an unlink, then $\mathcal{T}_1$ is unlinkable if it intersects more than one string of $\mathcal{T}$ and unknottable otherwise; if $K$ is a split link, then $\mathcal{T}_1$ is splittable if it intersects  two strings of $\mathcal{T}$ separated by the splitting decomposition.
\end{proof}

As an immediate consequence of this theorem, we have the following corollary.

\begin{corollary}\label{corollary:subtangle}
	Let  $\mathcal{T}_1 \cup\mathcal{T}_2$ be a tangle decomposition of  $\mathcal{T}$.
	\begin{enumerate}
		\item If $\mathcal{T}$ is unknottable, then  $\mathcal{T}_1$ and  $\mathcal{T}_2$ are unknottable.
		\item If $\mathcal{T}$ is unlinkable, then $\mathcal{T}_1$ and $\mathcal{T}_2$ are unlinkable or unknottable. If the tangle decomposition is not an union, then at least one of 	$\mathcal{T}_1$ or $\mathcal{T}_2$ is unlinkable.
		\item If $\mathcal{T}$ is splittable and the tangle decomposition is not an union, then at least one of 	$\mathcal{T}_1$ or $\mathcal{T}_2$ is splittable.
	\end{enumerate}
\end{corollary}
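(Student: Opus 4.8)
The plan is to deduce everything from Theorem~\ref{theorem:subtangle}, after observing that in a tangle decomposition $\mathcal{T}_1\cup\mathcal{T}_2$ of $\mathcal{T}$ both pieces are subtangles of $\mathcal{T}$: we have $B_1,B_2\subset B$, and the arcs $\sigma_i$ are exactly the parts of $\sigma$ lying in $B_i$. I can therefore apply each item of Theorem~\ref{theorem:subtangle} to $\mathcal{T}_1$ and, symmetrically, to $\mathcal{T}_2$. Part (a) is then immediate, since an unknottable $\mathcal{T}$ forces every subtangle to be unknottable. For the first assertion of (b), Theorem~\ref{theorem:subtangle}(b) tells us that each $\mathcal{T}_i$ is either unlinkable (when it meets more than one string of $\mathcal{T}$) or unknottable (when it meets at most one), which is exactly the dichotomy claimed.

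The remaining content is the two ``at least one'' statements, and these I would obtain from one counting argument about how the strings of $\mathcal{T}$ are distributed across the separating disk $D=B_1\cap B_2$. If the decomposition is not a union then, by definition, $D$ is not disjoint from $\sigma$, so some string $s$ of $\mathcal{T}$ meets $D$; being connected and contained in $B_1\cup B_2$ with $B_1\cap B_2=D$, this $s$ has arcs in both $B_1$ and $B_2$ and so is met by both $\mathcal{T}_1$ and $\mathcal{T}_2$. For (b) I first note that an unlinkable tangle has at least two strings (a single properly embedded arc can only close up to a knot), so there is a second string $s'$; whether $s'$ lies in $B_1$, lies in $B_2$, or itself crosses $D$, in each case one of $\mathcal{T}_1,\mathcal{T}_2$ meets both $s$ and $s'$ and hence meets more than one string of $\mathcal{T}$, so by Theorem~\ref{theorem:subtangle}(b) that piece is unlinkable. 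I would phrase this as a contrapositive: if neither piece were unlinkable, then each would meet at most one string of $\mathcal{T}$, and the case analysis shows this can happen only when $D$ avoids $\sigma$, that is, only when the decomposition is a union.

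For (c) I would run the same distribution argument while recording on which side of the splitting sphere $S$ each string lies. As $S$ is disjoint from the link, every string of $\mathcal{T}$ lies entirely on one side, so the strings are partitioned into those separated by the splitting decomposition; taking the string $s$ that crosses $D$ together with a string $s'$ on the opposite side of $S$, the three-case analysis again yields a piece meeting two strings separated by the splitting decomposition, which is splittable by Theorem~\ref{theorem:subtangle}(c). The step I expect to need the most care is precisely this bookkeeping for (c): unlike (b), it is not enough to exhibit a piece meeting two strings, since one must ensure that the two strings it meets lie on opposite sides of $S$. This forces me to verify that $\mathcal{T}$ genuinely has strings on both sides of the splitting decomposition -- the natural setting in which a splitting decomposition of $\mathcal{T}$ is meaningful -- and to choose $s$ and $s'$ on opposite sides from the start rather than arbitrarily.
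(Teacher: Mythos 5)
Your proof is correct and takes essentially the same route as the paper: the paper states this corollary as an immediate consequence of Theorem \ref{theorem:subtangle}, applied to $\mathcal{T}_1$ and $\mathcal{T}_2$ viewed as subtangles of $\mathcal{T}$. Your counting argument for the two ``at least one'' claims---a string crossing $D=B_1\cap B_2$ exists exactly when the decomposition is not a union, a second string exists because no closed component of a closure can be hidden in the closure tangle, and for (c) that second string is chosen on the opposite side of the splitting sphere---is precisely the bookkeeping the paper leaves implicit.
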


For instance, since $7_{16}\approx\mathcal{T}*[-2]$, where $\mathcal{T}\approx 6_2$, as illustrated in Figure  \ref{figure:7_16_as_product}, and the tangle $6_2$ is not unknottable nor unlinkable (see Section \ref{section:tables}), then $7_{16}$ is also not unknottable nor unlinkable (as both strings are unknotted, from Proposition \ref{proposition:rational}, it is also not splittable).

\begin{figure}[ht]
	\centering
	\includegraphics[scale=.1]{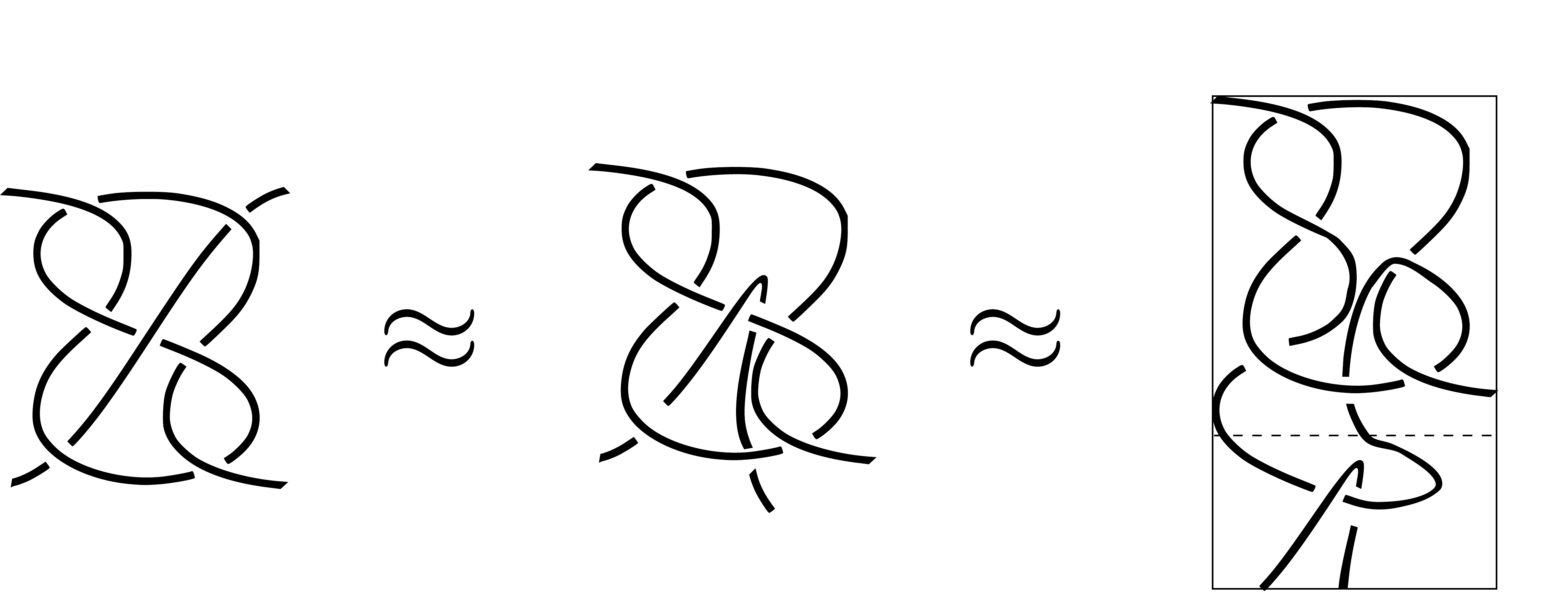}
	\caption{The equivalence of $7_{16}$ to a product of a tangle equivalent to $6_2$ and $[-2]$.}
	\label{figure:7_16_as_product}
\end{figure}

The converse of this corollary is not true. For instance,  $6_2$ is not unknottable, unlinkable or splittable, as verified in Section \ref{section:tables}, but it has a decomposition into two trivial 2-string tangles.
However, in the case where the decomposition of tangles is an union, the  converse is also true, as stated in the following theorem.

\begin{theorem}\label{lemma:side_by_side}
	Let $\mathcal{T}$ be the union of the tangles  $\mathcal{T}_1$ and $\mathcal{T}_2$. Then
	\begin{enumerate}
		\item  $\mathcal{T}$ is unknottable if and only if $\mathcal{T}_1$ and $\mathcal{T}_2$ are unknottable.
		\item  $\mathcal{T}$ is unlinkable if and only if $\mathcal{T}_1$ and $\mathcal{T}_2$ are unlinkable or unknottable.
		\item  $\mathcal{T}$ is splittable.
	\end{enumerate} 
\end{theorem}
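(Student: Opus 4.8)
The plan is to obtain the forward implications of (a) and (b) immediately from Corollary~\ref{corollary:subtangle}, and to establish the converses together with (c) by a single diagrammatic side-by-side construction justified by Theorem~\ref{theorem:equivalence}. Since $\mathcal{T}_1$ and $\mathcal{T}_2$ arise from a tangle decomposition of $\mathcal{T}$, Corollary~\ref{corollary:subtangle}(a) shows that $\mathcal{T}$ unknottable forces both $\mathcal{T}_1,\mathcal{T}_2$ unknottable, and Corollary~\ref{corollary:subtangle}(b) shows that $\mathcal{T}$ unlinkable forces each of $\mathcal{T}_1,\mathcal{T}_2$ to be unlinkable or unknottable. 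So only the converses in (a) and (b) and the unconditional claim (c) remain. For the common set-up, recall that $B_1\cap B_2$ is a disk $D$ disjoint from $\sigma_1\cup\sigma_2$; I would therefore choose diagrams $\mathcal{D}_1$ of $\mathcal{T}_1$ and $\mathcal{D}_2$ of $\mathcal{T}_2$ whose gluing arcs (the image of $\partial D$) face each other, so that placing $\mathcal{D}_1$ and $\mathcal{D}_2$ side by side produces a diagram $\mathcal{D}=\mathcal{D}_1\cup\mathcal{D}_2$ of $\mathcal{T}$. By Theorem~\ref{theorem:equivalence} it then suffices to verify that this one diagram $\mathcal{D}$ is unknottable, unlinkable, or splittable, respectively.

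For the converse of (a), assume $\mathcal{T}_1,\mathcal{T}_2$ are unknottable. By Theorem~\ref{theorem:equivalence} each $\mathcal{D}_i$ is unknottable, so there is a diagram $\mathcal{E}_i$ of the unknot containing $\mathcal{D}_i$, with the closure of $\mathcal{D}_i$ drawn in the region complementary to the tangle. Placing $\mathcal{E}_1$ and $\mathcal{E}_2$ side by side yields a diagram of a two-component unlink containing $\mathcal{D}$. I would then fuse the two unknot diagrams by a band routed through the complementary (closure) region along their facing outermost edges; this realizes the connected sum of the two diagrams, and since the connected sum of the unknot with the unknot is the unknot, the result is a diagram of the unknot. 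As the band lies entirely in the closure region it avoids $\mathcal{D}$, so the resulting unknot diagram still contains $\mathcal{D}$. Hence $\mathcal{D}$ is unknottable and, by Theorem~\ref{theorem:equivalence}, so is $\mathcal{T}$.

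For the converse of (b) and for (c) the argument is identical except that no fusion is performed. Assuming each $\mathcal{T}_i$ is unlinkable or unknottable, I would choose for each a diagram $\mathcal{E}_i$ of an unlink or of the unknot containing $\mathcal{D}_i$; placing $\mathcal{E}_1$ beside $\mathcal{E}_2$ then gives a diagram of the disjoint union of the two closures. This union has at least two components (each $\mathcal{E}_i$ contributes at least one, and two unknots already form a two-component unlink), so it is an unlink diagram containing $\mathcal{D}$, whence $\mathcal{T}$ is unlinkable. For (c) no hypothesis is needed: close each $\mathcal{T}_i$ by any diagram $\mathcal{F}_i$, for instance the trivial closure of its endpoints, obtaining a diagram of a link $L_i$ with at least one component. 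Placing $\mathcal{F}_1$ beside $\mathcal{F}_2$ gives a diagram of the distant union $L_1\sqcup L_2$, which is split by the vertical line separating the two halves and which contains $\mathcal{D}$; hence $\mathcal{T}$ is splittable.

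The routine parts here are the side-by-side placements and the elementary component count for (b). The one step I expect to require genuine care is the fusion in the converse of (a): one must confirm that the connecting band can be drawn within the closure region so that it avoids $\mathcal{D}$ while still genuinely realizing a connected sum of the two unknot diagrams, rather than some other band sum. This is the only real subtlety, and I would resolve it by taking the band along the facing \emph{outermost} edges of $\mathcal{E}_1$ and $\mathcal{E}_2$, which lie in the unbounded complementary region and so both keep the band off $\mathcal{D}$ and guarantee that the operation is a connected sum.
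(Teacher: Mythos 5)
Your proposal is correct and takes essentially the same route as the paper: the forward implications come from Corollary~\ref{corollary:subtangle}, and the converses come from placing the two closures side by side, fusing them into a single unknot in case (a) (the paper does this by adding a single crossing between the two closure tangles, which is the same operation as your band sum realizing a connected sum) and leaving them disjoint in cases (b) and (c). The only cosmetic difference is that you phrase the construction at the diagram level via Theorem~\ref{theorem:equivalence}, whereas the paper argues directly with the closure tangles and its Figure~\ref{figure:unknottable}.
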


\begin{proof}
	(a) Suppose that $\mathcal{T}$ is unknottable. By Corollary \ref{corollary:subtangle}a), $\mathcal{T}_1$  and $\mathcal{T}_2$ are unknottable. 
	Now suppose that $\mathcal{T}_1$ and $\mathcal{T}_2$ are unknottable. Then there exist
	tangles $\mathcal{T}'_1,\mathcal{T}'_2$ such that $\mathcal{T}_1\cup\mathcal{T}'_1$ and $\mathcal{T}_2\cup\mathcal{T}'_2$ are tangle decompositions of the unknot. By adding a single crossing between $\mathcal{T}'_1$ and $\mathcal{T}'_2$, as in Figure \ref{figure:unknottable}, we obtain an unknot $K$ containing $\mathcal{T}$. Therefore $\mathcal{T}$ is unknottable.\\
	
	\begin{figure}[ht]
		\centering
		\includegraphics[scale=.075]{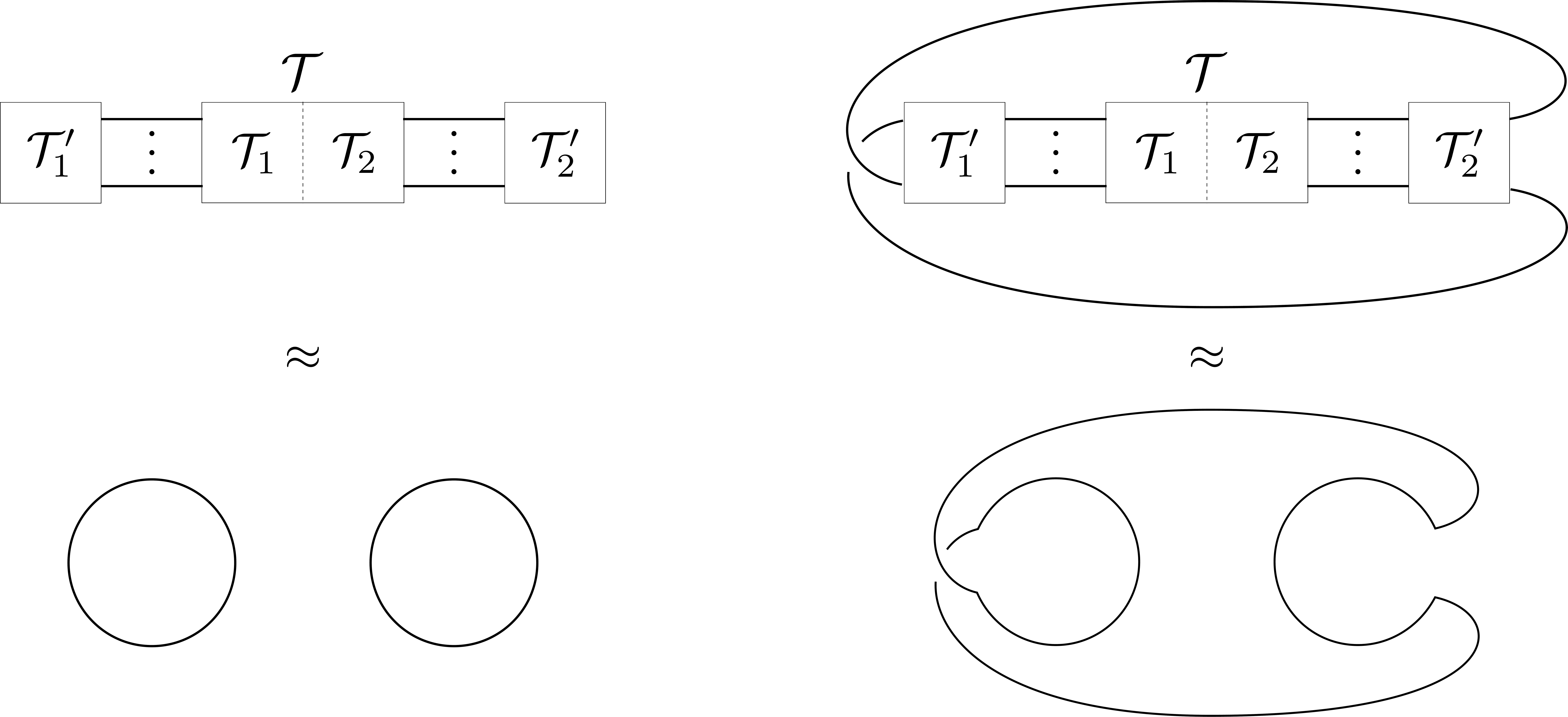} 
		\caption{The union of unknottable tangles is unknottable.}
		\label{figure:unknottable}
	\end{figure}
	
	(b) Suppose that $\mathcal{T}$ is unlinkable. By Corollary \ref{corollary:subtangle} (b), $\mathcal{T}_1$  and $\mathcal{T}_2$ are unlinkable or unknottable.
	Now suppose that $\mathcal{T}_1$ and $\mathcal{T}_2$ are unlinkable or unknottable. Then there exist tangles $\mathcal{T}'_1,\mathcal{T}'_2$ such that $\mathcal{T}_1\cup\mathcal{T}'_1$ and $\mathcal{T}_2\cup\mathcal{T}'_2$ are tangle decompositions of the unknot or an unlink. Hence, the union of the tangles $\mathcal{T}'_1$, $\mathcal{T}'_2$ and $\mathcal{T}$ is an unlink. Therefore $\mathcal{T}$ is unlinkable.\\
	
	(c) From the decomposition of $\mathcal{T}$ defined by $\mathcal{T}_1$ and $\mathcal{T}_2$ we obtain a split link by closing $\mathcal{T}$ appropriately on each side of the decomposition.
\end{proof}

\begin{corollary}
	Any $n$-string tangle equivalent to the trivial $n$-string tangle is unknottable and unlinkable.
\end{corollary}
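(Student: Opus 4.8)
The plan is to reduce to the standard trivial tangle via the invariance already established, and then to build it up as a union of single unknotted arcs so that Theorem \ref{lemma:side_by_side} applies. By Theorem \ref{theorem:equivalence}, unknottability and unlinkability depend only on the equivalence class of a tangle, so it suffices to prove that the standard trivial $n$-string tangle $\mathcal{T}_n$, consisting of $n$ unknotted, mutually unlinked, parallel arcs in $B$, is both unknottable and unlinkable. For the base case, note that the trivial $1$-string tangle is a single unknotted arc, and closing it with another trivial $1$-string tangle yields the unknot; hence it is unknottable.

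Next I would observe that $\mathcal{T}_n$ admits a decomposition as a union: a disk separating one of the parallel arcs from the remaining $n-1$ can be taken disjoint from the strings, exhibiting $\mathcal{T}_n$ as the union $\mathcal{T}_1\cup\mathcal{T}_{n-1}$ of a trivial $1$-string tangle and a trivial $(n-1)$-string tangle. The proof then proceeds by induction on $n$ using Theorem \ref{lemma:side_by_side}. Assuming $\mathcal{T}_{n-1}$ is unknottable and unlinkable, part (a) of that theorem gives that $\mathcal{T}_n$ is unknottable, since both summands are unknottable; and part (b), which requires each summand to be unlinkable or unknottable, gives that $\mathcal{T}_n$ is unlinkable, since $\mathcal{T}_1$ is unknottable and $\mathcal{T}_{n-1}$ is unlinkable by the inductive hypothesis.

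The only real subtlety is verifying that the separating disk is genuinely disjoint from the strings, so that the decomposition qualifies as a union rather than a general tangle decomposition. This matters because Theorem \ref{lemma:side_by_side} applies specifically to unions, and the converse direction of Corollary \ref{corollary:subtangle} fails for arbitrary decompositions. For the standard model of $\mathcal{T}_n$, with its parallel arcs, this disjointness is immediate, so no further work is needed there.
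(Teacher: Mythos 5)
Your proof is correct and follows essentially the route the paper intends: the corollary is stated there as an immediate consequence of Theorem \ref{lemma:side_by_side}, and your argument just makes the implicit induction explicit, writing the trivial $n$-string tangle as an iterated union of unknotted $1$-string tangles and invoking Theorem \ref{theorem:equivalence} to pass to equivalent tangles. The only cosmetic slip is that at the step $n=2$ the inductive hypothesis gives that $\mathcal{T}_1$ is unknottable but not unlinkable; as you yourself note, part (b) of Theorem \ref{lemma:side_by_side} only requires each piece to be unlinkable \emph{or} unknottable, so the induction goes through unchanged.
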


By this Corollary, any rational tangle is unknottable. (See also \cite{ES-90, KL-11}.) For 2-string tangles the converse of this corollary is true, as a 2-string tangle that is unknottable and splittable is trivial \cite{EM-88}.



But in general, the  converse of this corollary is not true. In fact, consider the 3-string tangle $\mathcal T$ defined by adding a parallel string to one of the strings of the 2-string tangle $6_3$. Then $\mathcal T$ is unlinkable (it embeds into the three components unlink) and unknottable, but it is not equivalent to a trivial tangle. 

We also observe that Conjecture 3.1 of \cite{KL-20} is false. In fact, the tangle $\mathcal T$ of Figure \ref{figure:51ab}a) is essential \cite{NS}, but it is unknottable, as can be seen in Figure \ref{figure:51ab}b), which shows a diagram of a trivial knot containing a diagram of $\mathcal T$. Moreover,  $\mathcal T$ is irreducible in the sense of \cite{KL-20}, because its numerator closure is the nontrivial knot $5_1$, as illustrated in Figure \ref{figure:51c}, and $\mathcal T$ has minimum crossing number \cite{NS}. 

\begin{figure}[ht]\label{figure:conjecture}
	\centering
	\includegraphics[scale=.1]{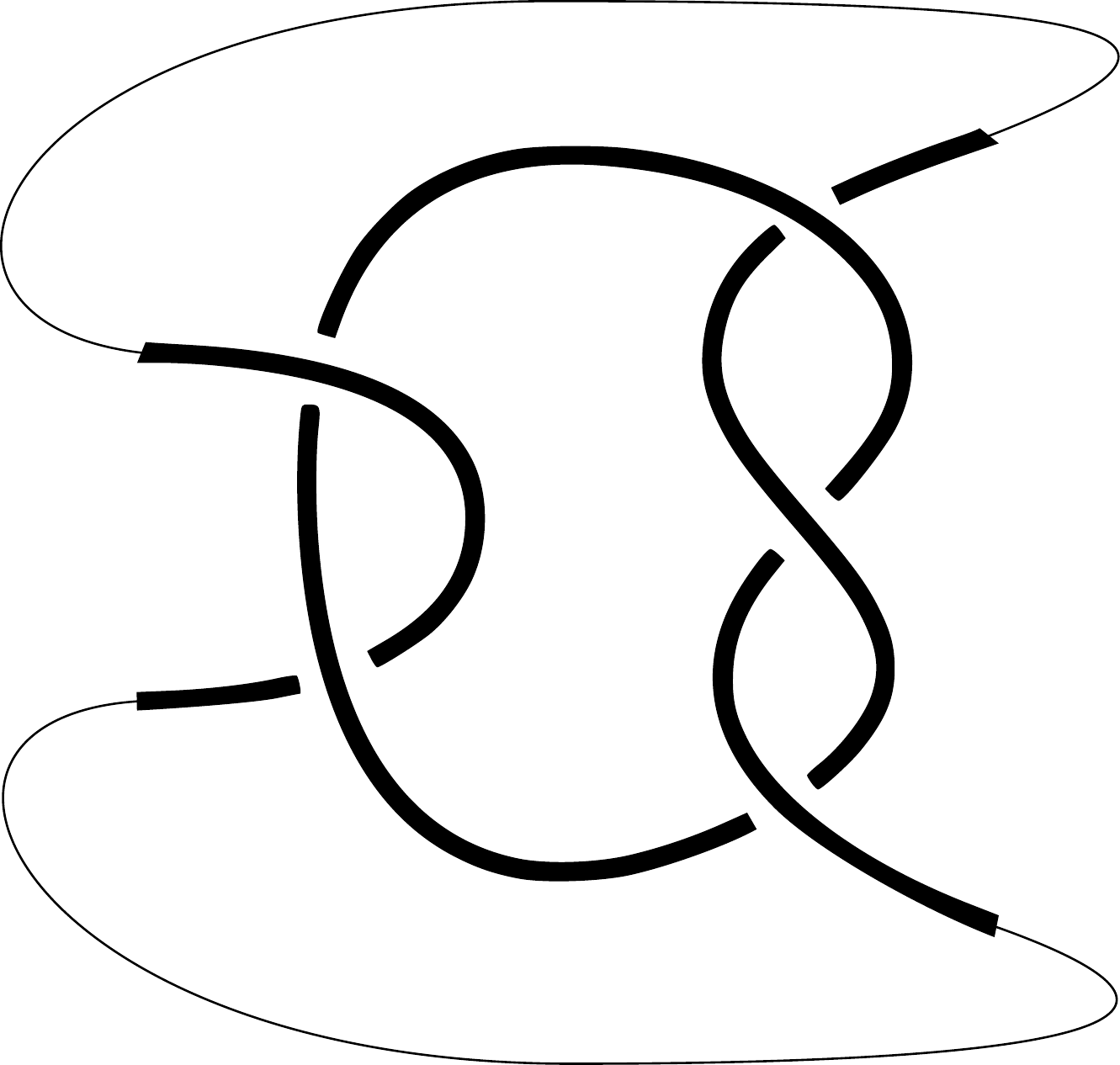} 
	\caption{The numerator closure of $\mathcal{T}$ .}
	\label{figure:51c}
\end{figure}

\section{Geometric characterization for 2-string tangles} \label{section:geometric}

In this section we characterize geometrically unknottable, unlinkable and splittable 2-string tangles. We will use the expression {\em (punctured) disk} to refer to a disk minus the interior of a set of $n$ disks, where $n$ is a non-negative integer (i.e., to either a disk or a punctured disk).

\begin{definition}
	Let $\mathcal{T}=(B,s_1\cup s_2)$ be a 2-string tangle.\\ 
	We say that $s_1$ and $s_2$ are {\em quasi-parallel} if there is a (punctured) disk $P$ embedded in $B$ such that $s_1$ and $s_2$ are in the same component of $\partial P$ and $\partial P-(s_1\cup s_2)\subset\partial B$.\\
	We say that $s_1$ and $s_2$ are {\em quasi-trivial} if there are two disjoint (punctured) disks $P_1$ and $P_2$ embedded in $B$ co-bounded, respectively, by $s_1$ and $s_2$.\\
	We say that $\mathcal{T}$ is {\em quasi-inessential} if there is a (punctured) disk $D$ properly embedded in $B$, disjoint from $s_1$ and $s_2$, such that each boundary component of $D$ separates the ends of $s_1$ from the ends of $s_2$. (See Figure \ref{figure:quasiparallel}.)
\end{definition}

\begin{figure}[ht]
	\centering
	\includegraphics[scale=.075]{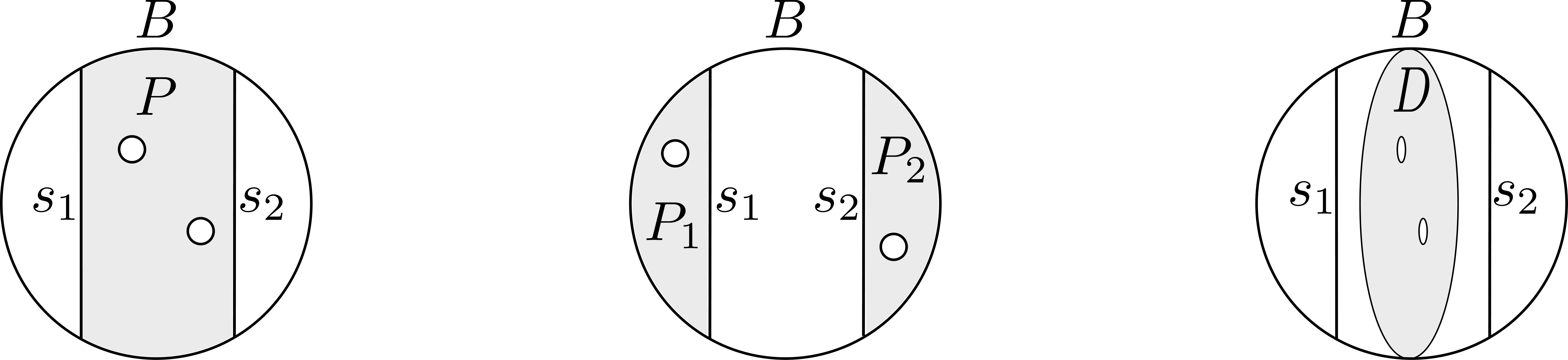} 
	\caption{Quasi-parallel, quasi-trivial and quasi-inessential strings.}
	\label{figure:quasiparallel}
\end{figure}

We remark that a rational tangle is quasi-inessential and its strings are quasi-parallel and quasi-trivial, with the disks in each case having no punctures. We can visualize these disks by starting with two parallel strings as in Figure \ref{figure:quasiparallel} and rotating the ends of the strings until obtaining the desired rational tangle.
	
For non rational tangles, punctures are required as the disks are pushed across $\partial B$. On Figure \ref{figure:puncture}, we can visualize such a puncture, which is defined by the intersection of the disk bounded by the depicted unknot with $\partial B$.

\begin{figure}[ht]
		\centering
		\includegraphics[scale=.15]{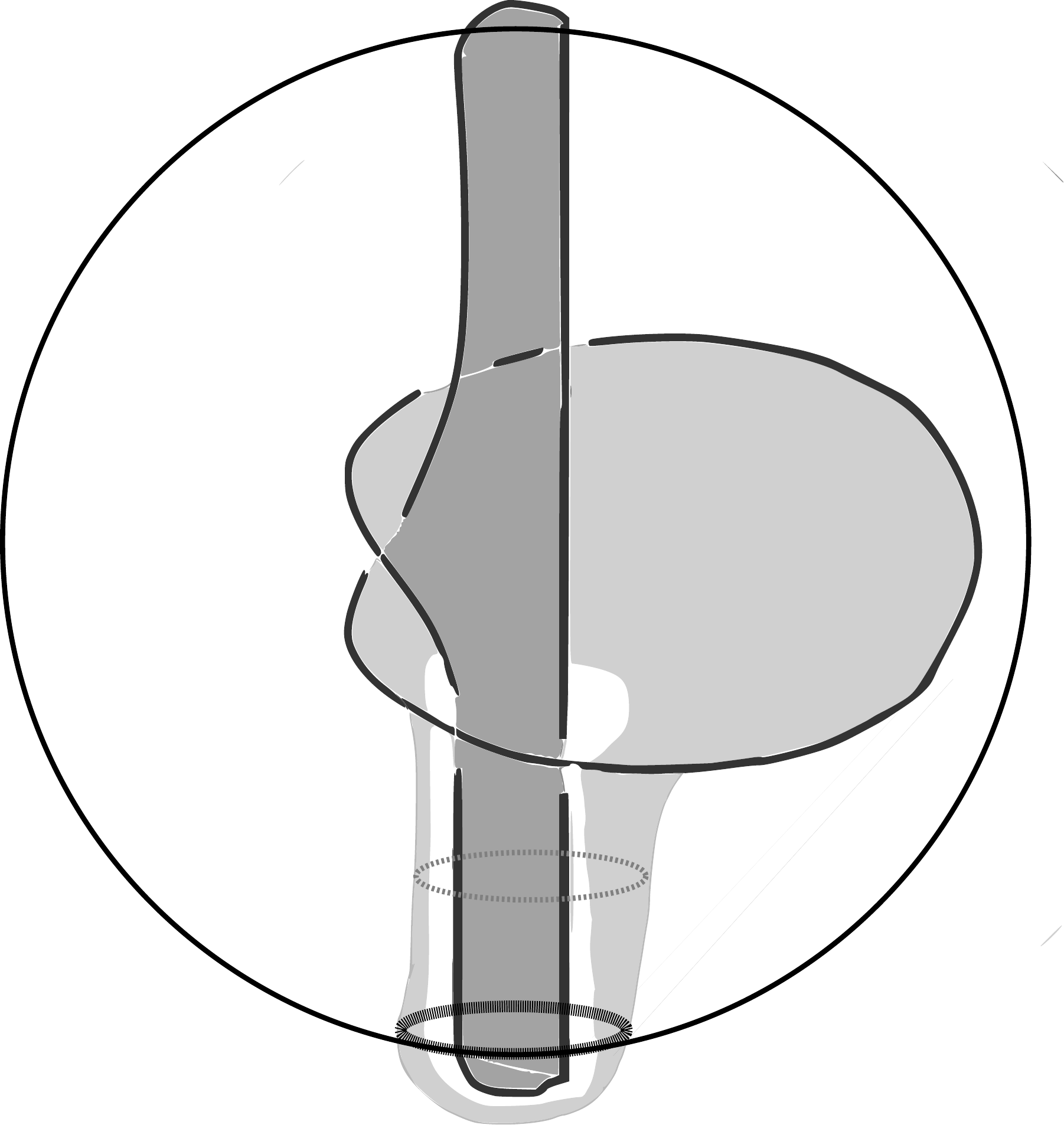} 
		\caption{A punctured disk in $B$.}
		\label{figure:puncture}
\end{figure}

\begin{lemma}\label{lemma:quasi-parallel}
	Let $\mathcal{T}=(B,s_1\cup s_2)$ be a 2-string tangle, with $s_1$ and $s_2$ quasi-parallel. If $\mathcal{T}$ is inessential, then it is trivial.
\end{lemma}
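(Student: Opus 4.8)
The plan is to build two disjoint boundary-parallelisms for the strings, since a $2$-string tangle admitting disjoint properly embedded disks $D_1,D_2$ with $\partial D_i=s_i\cup(\text{arc on }\partial B)$ is trivial. I start from the two surfaces the hypotheses provide: a separating disk $D$ witnessing inessentiality, with $\partial D\subset\partial B$ and $D\cap(s_1\cup s_2)=\emptyset$, cutting $B$ into balls $B_1\ni s_1$ and $B_2\ni s_2$; and the quasi-parallel punctured disk $P$, whose outer boundary component is $s_1\cup a_1\cup s_2\cup a_2$ with $a_1,a_2\subset\partial B$ and whose puncture circles lie on $\partial B$. The idea is to cut $P$ along $D$ and read off the two parallelism disks from the halves, after first clearing away the circles of intersection and the punctures.

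First I would put $P$ and $D$ in general position and minimize $|P\cap D|$ by an ambient isotopy fixing $s_1\cup s_2$. Because $a_1$ and $a_2$ each join an end of $s_1$ to an end of $s_2$, and $\partial D$ separates the ends of $s_1$ from those of $s_2$ on $\partial B$, each $a_i$ meets $\partial D$ an odd, hence minimally a single, number of times; so $P\cap D$ contains one spanning arc $\alpha$ joining these two crossing points, together with closed circles and arcs running to the punctures. These last I remove by standard surgery: an innermost circle of $P\cap D$ on $D$ bounds a subdisk $\Delta\subset D$, and according to whether the matching circle on $P$ bounds a disk in $P$ or is parallel to a puncture, I either isotope $P$ across $\Delta$ (lowering $|P\cap D|$) or cap the puncture with $\Delta$ (lowering the number of punctures); an outermost arc running to a puncture is used symmetrically to push that puncture off $\partial D$. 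Iterating, I reach $P\cap D=\alpha$, with every puncture disjoint from $\partial D$ and hence contained in a single $B_i$.

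Now $\alpha$ splits the outer boundary of $P$ into an arc carrying $s_1$ and an arc carrying $s_2$, so $\alpha$ cuts $P$ into pieces $P_1\subset B_1$ and $P_2\subset B_2$ with $s_i\subset\partial P_i$ and $\partial P_i-s_i\subset\partial B$. If the $P_i$ are honest disks, they are the desired disjoint parallelisms and $\mathcal T$ is trivial; so the entire difficulty is concentrated in the punctures of the $P_i$. This is the step I expect to be the main obstacle: I would take a puncture $\gamma\subset\partial B_i$, note that it bounds a disk $E$ on the sphere $\partial B_i$, and try to cap $\gamma$ by pushing $E$ into $B_i$. This is immediate unless $E$ is forced to meet the arc $s_i$, that is, unless $\gamma$ encircles an endpoint of $s_i$ on $\partial B_i$; ruling out this last configuration is the delicate point, and it is here that I would lean on inessentiality together with the minimality of $|P\cap D|$, arguing that such a puncture would have been removed earlier by surgery against $D$. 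Closing this case analysis completes the proof.
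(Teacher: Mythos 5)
Your reduction phase is essentially sound and runs parallel to the paper's own argument: the paper likewise minimizes the intersection of the quasi-parallel surface $P$ with the separating disk $D$ and analyzes the arcs of $P\cap D$ (via an outermost-arc case analysis, with the number of punctures of $P$ minimized at the outset), arriving at the same key configuration you reach -- a single spanning arc $\alpha$ with one end on each of $a_1,a_2$, cutting $P$ into punctured disks $P_1,P_2$ co-bounded by $s_1$ and $s_2$. The genuine gap is in your endgame, exactly at the point you flag. You propose to remove the punctures of $P_i$ by pushing disks of $\partial B_i$ into $B_i$, and to dispose of the bad case (a puncture $\gamma$ encircling an endpoint of $s_i$) by ``inessentiality together with the minimality of $|P\cap D|$, arguing that such a puncture would have been removed earlier by surgery against $D$.'' That mechanism cannot work: after your reductions every puncture is disjoint from $D$, so it contributes nothing to $|P\cap D|$ and is untouched by any surgery against $D$; minimality of $|P\cap D|$ places no constraint at all on where punctures sit in $\partial B\cap B_i$. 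So the delicate case is left genuinely open by your proposal.

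It can be closed, in two different ways. (i) The case you fear is in fact vacuous, for a reason you did not identify: let $\beta_i$ be the arc obtained from the outer boundary component of $P_i$ by deleting the interior of $s_i$ (a sub-arc of $a_1$, then $\alpha$, then a sub-arc of $a_2$). This is a connected arc on the sphere $\partial B_i$ joining the two endpoints of $s_i$ and disjoint from $\gamma$, so $\gamma$ can never separate the two endpoints of $s_i$ in $\partial B_i$. Hence the disk that $\gamma$ bounds on the side away from the endpoints contains neither endpoint and is disjoint from $\beta_i$ (so also from $D$); choosing $\gamma$ innermost on that side, this disk meets nothing and can be pushed into $B_i$ to cap the puncture, and iterating yields your honest parallelism disks. (ii) The paper's route avoids puncture-removal altogether: it caps the punctures of $P_1$ and $P_2$ with disjoint disks in the complementary ball $S^3-\mathrm{int}(B)$ (disjoint circles on a sphere bound disjoint disks in the ball they cobound), so the closure of each $s_i$ by an arc in $\partial B$ bounds an embedded disk in $S^3$; thus each string is unknotted, and an inessential tangle with both strings unknotted is trivial. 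Either repair completes your proof; the appeal to minimality of $|P\cap D|$ does not.
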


\begin{proof}
	Since $s_1$ and $s_2$ are quasi-parallel, there exists a (punctured) disk $P$ embedded in $B$ such that $s_1$ and $s_2$ are in the same component $b$ of $\partial P$ and $\partial P-(s_1\cup s_2)\subset\partial B$. Assume that $P$ has the minimal possible number of punctures.
	
	Since $\mathcal{T}$ is inessential, there exists a disk $D$ properly embedded in $B$ separating the strings $s_1$ and $s_2$. Assume that the number of components of $D\cap P$, denoted by $|D\cap P|$, is minimal, among all possible disks $D$. Let $a$ and $a'$ be the arcs defined by $b-(s_1\cup s_2)$ and $\alpha$ be an outermost arc of $D\cap P$ in $D$.
	
	Suppose that $\alpha$ is non-separating in $P$. Then, by compressing $P$ along $\alpha$ and the corresponding outermost disk in $D$, we reduce the number of punctures of $P$, contradicting its minimality. Otherwise, suppose that $\alpha$ is separating in $P$. If the ends of $\alpha$ are in the same component of $\partial P - (s_1\cup s_2)$,  by compressing $P$ along $\alpha$ and the corresponding outermost disk in $D$, we reduce the number of punctures of $P$ or reduce $|D\cap P|$, contradicting their minimality. Hence, $\alpha$ is separating and has ends in distinct components of $\partial P - (s_1\cup s_2)$. As $\alpha$ is separating, it has to have ends in the same component of $\partial P$. Hence, $\alpha$ has ends in $b$, more precisely, one end in $a$ and the other end in $a'$. Then, $\alpha$ cuts $P$ into two disjoint embedded possibly punctured disks, one co-bounded by $s_1$ and the other by $s_2$. Therefore, $s_1$ and $s_2$ are unknotted, which, for being inessential, implies that $\mathcal{T}$ is trivial. \end{proof}

\begin{theorem} \label{theorem:main}A 2-string tangle is unknottable if and only if its strings are quasi-parallel.
\end{theorem}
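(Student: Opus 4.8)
The plan is to prove the two implications separately, after first normalizing the closure tangle. By the observation recalled in Section~\ref{section: algebraic properties}, a $2$-string tangle $\mathcal T=(B,s_1\cup s_2)$ is unknottable if and only if it admits a \emph{rational} unknotting closure tangle $\mathcal U$ in the exterior ball $B'$, and every rational tangle is equivalent to the trivial one. Since both unknottability (by Theorem~\ref{theorem:equivalence}) and quasi-parallelism (its defining (punctured) disk lies in $B$ with non-string boundary on $\partial B$, and these features are carried along by any ambient isotopy of the pair) are invariant under equivalence, I may replace $\mathcal T$ by an equivalent tangle for which the closure $\mathcal U$ is the \emph{trivial} tangle, its two strings boundary-parallel in $B'$. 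Flattening those two strings onto $S:=\partial B$ by an isotopy supported in $B'$ (fixing $\mathcal T$) turns the closure curve $K=\mathcal T\cup\mathcal U$ into $\hat K=s_1\cup a\cup s_2\cup a'$, where $a,a'\subset S$ are disjoint arcs joining the endpoints of $s_1$ to those of $s_2$. The point of this normalization is that $\hat K$ is precisely the candidate outer boundary of a quasi-parallel disk, and that $\hat K$ is a \emph{single} circle carrying both strings; this is what will force the conclusion ``quasi-parallel'' rather than merely ``quasi-trivial''.

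For the direction ``quasi-parallel $\Rightarrow$ unknottable'', I start from a punctured disk $P\subset B$ with outer boundary $s_1\cup a\cup s_2\cup a'$ and puncture circles $c_1,\dots,c_n\subset S$, and close $\mathcal T$ by boundary-parallel strings that are push-offs of $a$ and $a'$ into $B'$. Then $\mathcal T\cup\mathcal U$ is isotopic to $\hat K=\partial(\text{outer boundary of }P)$, so it suffices to cap the punctures to a disk. Each $c_i$ bounds a disk on the sphere $S$; when that disk can be chosen disjoint from $a\cup a'$ I push it slightly into $B'$, capping the puncture away from the closure strings, and $P$ together with these caps is an embedded spanning disk for $\hat K$. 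The delicate punctures are those circles $c_i$ that separate $a$ from $a'$ on $S$: these I absorb by replacing the trivial closure with the appropriate \emph{rational} (twist) closure, the twists supplying the exterior disk pieces that cap the punctures while keeping the global surface a disk. Either way $\hat K$ bounds a disk, so $K$ is the unknot and $\mathcal T$ is unknottable.

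For the converse, after normalization I have a trivial closure with $\hat K=s_1\cup a\cup s_2\cup a'$ an unknot, bounding a disk $\Delta\subset S^3$. I put $\Delta$ in general position with respect to $S$, taking a collar of $\hat K$ in $\Delta$ to approach $S$ from the $B$-side along $a,a'$, so that $\Delta\cap S$ is a disjoint union of circles. I then run the standard innermost-disk surgery: an innermost circle of $\Delta\cap S$ bounding a subdisk of $\Delta$ whose interior lies in $B$ is removed whenever it bounds a disk on $S$ disjoint from $\hat K$, and the obstinate circles that survive are exactly those bounding subdisks of $\Delta$ into $B'$. Deleting these subdisks leaves $P:=\Delta\cap B$, a disk punctured along the surviving circles (on $S$), whose outer boundary is the single curve $\hat K$ carrying both $s_1$ and $s_2$; hence $s_1,s_2$ are quasi-parallel.

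The step I expect to be the main obstacle is the bookkeeping of the punctures and of the intersection circles $\Delta\cap S$. In the forward direction this means verifying that the innermost-disk surgeries can be completed so that precisely the circles dipping into $B'$ remain as punctures while the $B$-piece stays a single embedded disk; in the backward direction it means capping all punctures simultaneously, with embeddedness maintained and with the separating punctures correctly absorbed into a rational closure. The conceptual heart, by contrast, is cheap: flattening the rational closure onto $S$ merges the two strings onto the one curve $\hat K$, which is exactly why both strings end up on a \emph{single} boundary component of $P$. I would also take care to justify the two invariance claims (that unknottability and quasi-parallelism survive the equivalence used to trivialize $\mathcal U$) and to cite Section~\ref{section: algebraic properties} for the reduction to a rational, hence trivial, closure.
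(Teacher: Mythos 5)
The central gap is at the step you yourself flag as the main obstacle, and your proposed resolution of it does not work. In the direction ``quasi-parallel $\Rightarrow$ unknottable'', a puncture circle $c_i\subset\partial B$ separating $a$ from $a'$ needs no change of closure at all: precisely because your closure is the \emph{trivial} tangle $\mathcal{U}=(B',u_1\cup u_2)$ with $u_1,u_2$ push-offs of $a,a'$, such a circle separates the two pairs of string endpoints on $\partial B$ and therefore bounds a properly embedded disk in $B'$ disjoint from $u_1\cup u_2$ --- concretely, the frontier of a regular neighborhood in $B'$ of $E\cup u_1$, where $E\subset\partial B$ is the disk bounded by $c_i$ containing $a$; this is just a splitting disk of the inessential tangle $\mathcal{U}$. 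Your alternative --- ``replacing the trivial closure with the appropriate rational (twist) closure, the twists supplying the exterior disk pieces'' --- is incoherent as stated: changing the closure changes the link being built, the punctured disk $P$ still has outer boundary $\hat K$ rather than the new closure curve, and twisting the closure strings around one another makes it \emph{harder}, not easier, for $c_i$ to bound a disk in $B'$ avoiding them. This is exactly the point where the paper's proof uses (tersely) the true statement that each component of $\partial P-K$ bounds a disk in $B'-(u_1\cup u_2)$; without it your forward direction is not proved.

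There are two further gaps. First, your normalization invokes ``unknottable iff it admits a rational unknotting closure'', but Proposition \ref{proposition:rational} is proved only for \emph{essential} tangles, and the hypothesis matters: reading Figure \ref{figure:51ab} from the other side, a rational (hence inessential) tangle has the essential tangle $5_1$ as an unknotting closure, so rationality of closures fails for inessential tangles; what saves your statement is that an inessential unknottable tangle is trivial, which is exactly the separate inessential case the paper runs (via Theorem \ref{lemma:side_by_side} and Lemma \ref{lemma:quasi-parallel}) and you omit. Second, in the converse direction your assertion that every surviving circle of $\Delta\cap S$ bounds a subdisk of $\Delta$ lying in $B'$ is never justified; it does hold after your normalization (a surviving circle separates the endpoint pairs crosswise, so a subdisk of $\Delta$ in $B$ bounded by it would be a disk disjoint from $s_1\cup s_2$ that each string nevertheless has to cross), but some such argument --- or the essentiality argument the paper uses --- must be given, and since $\Delta\cap B$ may be disconnected you should take $P$ to be the component containing $\partial\Delta$. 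With these repairs your route does work and is a genuine simplification of the paper's: flattening a rational closure onto $\partial B$ forces both closure arcs to pair the endpoints correctly, eliminating the paper's case analysis of the arcs $\alpha_1,\alpha_2$, and in fact once $\Delta\cap S$ consists of circles the component of $\Delta\cap B$ containing $\partial\Delta$ is already a quasi-parallelism surface, so no surgery is needed at all. But as written, the points above are real gaps, and the first sits at the step the proof cannot do without.
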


\begin{proof} Let $\mathcal{T}$ be a 2-string tangle.
	Suppose that $\mathcal{T}$ is inessential. Then $\mathcal{T}$ is the union of two $1$-string tangles $\mathcal{T}_1$ and $\mathcal{T}_2$. By Theorem \ref{lemma:side_by_side}, $\mathcal{T}$ is unknottable if and only if the tangles $\mathcal{T}_1$ and $\mathcal{T}_2$ are unknottable, that is, both strings are unknotted. By Lemma \ref{lemma:quasi-parallel}, this is equivalent to the strings being quasi-parallel.
	
	Now consider the case when $\mathcal{T}=(B,s_1\cup s_2)$  is essential. 
	Suppose first that $\mathcal{T}$ is unknottable and let $\mathcal{U}=(B',u_1\cup u_2)$ be its unknotting closure tangle, with $\mathcal{T}\cup\mathcal{U}$ a tangle decomposition of the unknot $K$. Denote the sphere $B\cap B'$ by $S$. Since $K$ is trivial, it bounds an embedded disk $D$ in $S^3$. Consider $D$ such that the number of components of $D\cap S$, denoted by $|D\cap S|$, is minimal. 
	
	Let $\alpha$ be one of those components. If $\alpha$ is a closed curve, then it is essential in $S-(s_1\cup s_2)$, otherwise $\alpha$ bounds a disk in $S-(s_1\cup s_2)$ and we can reduce $|D\cap S|$, contradicting its minimality. As  $\mathcal{T}$ is essential, the disk that $\alpha$ bounds in $D$ is in $B'$. Therefore, by removing all these disks from $D$, we obtain a (punctured) disk $D'$ embedded in $B$.
	
	Now let $\alpha$ be an arc component of $D\cap S$. Since the only intersections of $\partial D$ and $S$ are the endpoints of $s_1$ and $s_2$, the arc $\alpha$ connects these endpoints and there are only two such arcs $\alpha_1$ and $\alpha_2$.
	
	Suppose first that $\alpha_i$ connects the endpoints of $s_i$, for $i=1,2$. Let $D_1$, $D_2$ be disjoint disks cut from $D$ by $\alpha_1$, $\alpha_2$.
	If $D_i\cap S$ contains a closed curve $\alpha$, as in Figure \ref{figure:disk}, then, as observed before, $\alpha$ doesn't bound a disk in $S-(s_1\cup s_2)$, it bounds a disk in $B'$. As $D_1$ is disjoint from $D_2$, the curve $\alpha$ separates $\partial s_1$ from $\partial s_2$ in $S$, and the disk $\alpha$ bounds in $B'$ separates $u_1$ and $u_2$. Then, without loss of generality, $\partial u_i$ is attached to $\partial s_i$, but this contradicts $K$ being a knot. Therefore, $D_i$ is disjoint from $S$, hence $s_i$ is trivial in $\mathcal{T}$, which contradicts $\mathcal{T}$ being essential.
	
	\begin{figure}[ht]
		\centering
		\includegraphics[scale=.1]{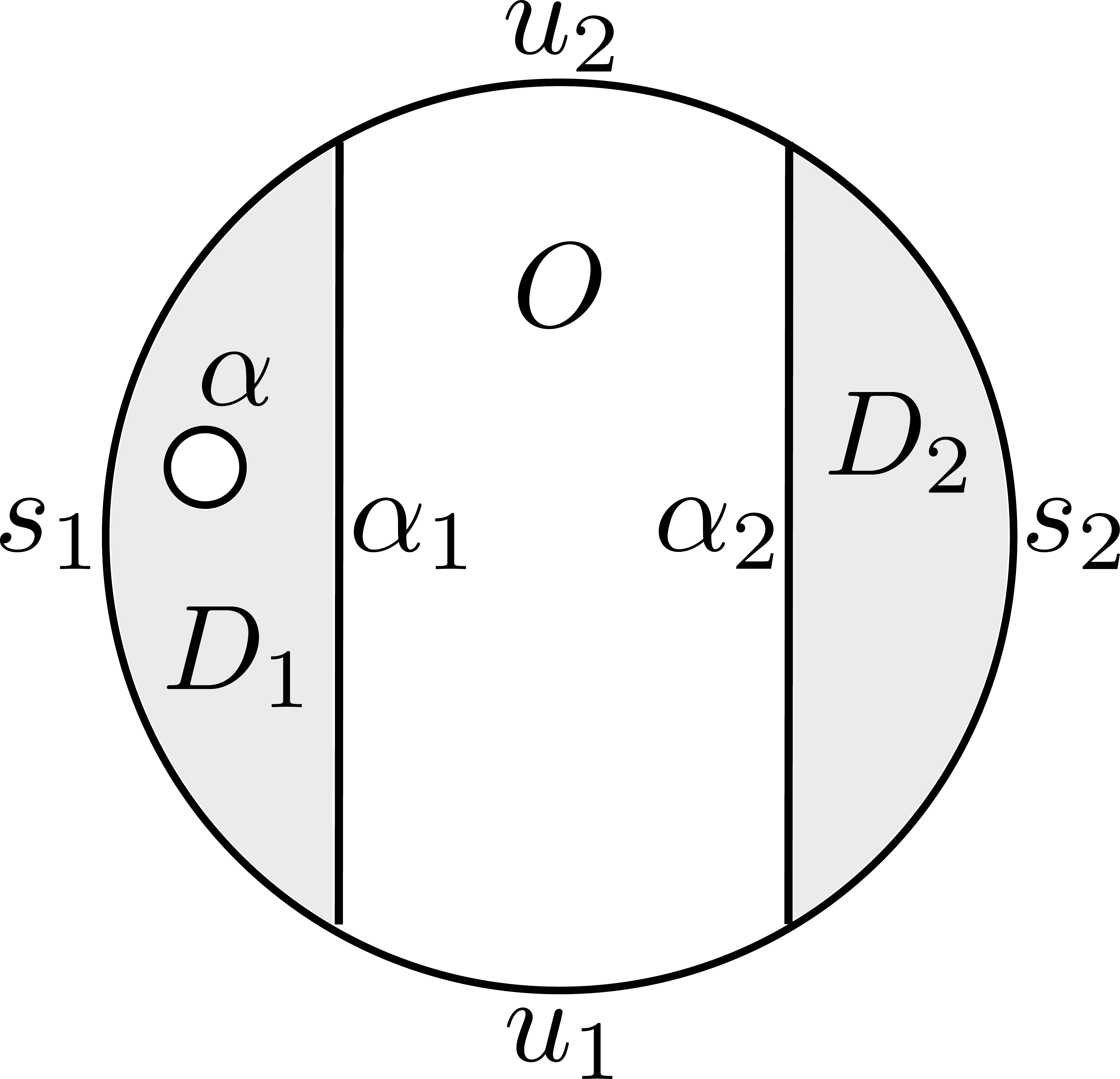} 
		\caption{If $\alpha_i$ connects the ends of $s_i$.}
		\label{figure:disk}
	\end{figure}
	
	Suppose now that $\alpha_1$ and $\alpha_2$ both connect an endpoint of $s_1$ and an endpoint of $s_2$. Let 
	$P$ be the (punctured) disk cut from $D'$ by $\alpha_1$ and $\alpha_2$, as in Figure \ref{figure:disk2}. Then $P$ is a disk embedded in $B$ such that $s_1$ and $s_2$ are in the same component of $\partial P$ and $\partial P-(s_1\cup s_2)\subset\partial B$. Therefore, $s_1$ and $s_2$ are quasi-parallel.
	
	\begin{figure}[ht]
		\centering
		\includegraphics[scale=.1]{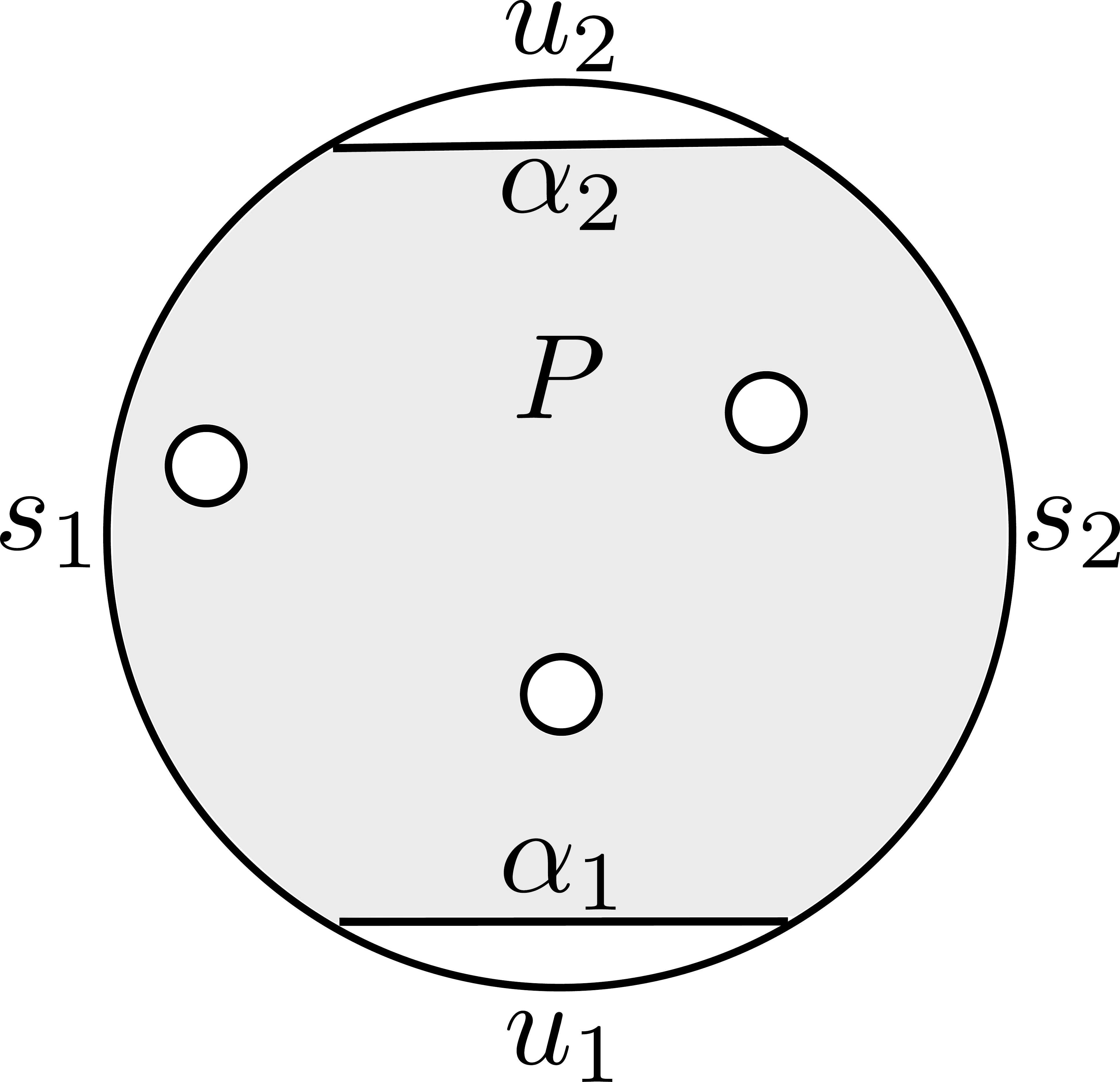} 
		\caption{If $\alpha_i$ connects  one end of $s_1$ to an end of $s_2$.}
		\label{figure:disk2}
	\end{figure}
	
	 Conversely, suppose that $s_1$ and $s_2$ are quasi-parallel and $P$ is a corresponding punctured disk. Let $K$ be the component of $\partial P$ containing $s_1\cup s_2$ and $u_1$ and $u_2$ be the two arcs of $K-(s_1\cup s_2)$. Isotope the interior of $u_1$ and $u_2$ into the exterior $B'$ of $B$ so that $\mathcal{T}'=(B',u_1\cup u_2)$ is a tangle. Then $\mathcal{T}\cup\mathcal{T}'$ is a tangle decomposition of $K$. Since each component of $\partial P-K$ bounds a disk in $B'-(u_1\cup u_2)$, then $K$ bounds a disk in $S^3$, hence it is trivial. Therefore $\mathcal{T}$ is unknottable.\end{proof}

\begin{theorem}
	A 2-string tangle is unlinkable if and only if its strings are quasi-trivial.
\end{theorem}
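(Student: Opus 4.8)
The plan is to mirror the proof of Theorem \ref{theorem:main}, replacing the single spanning disk of the unknot by the two disjoint spanning disks of the unlink. The guiding observation is that when a $2$-string tangle $\mathcal{T}=(B,s_1\cup s_2)$ is closed by a $2$-string tangle $\mathcal{U}=(B',u_1\cup u_2)$ to form the $2$-component unlink, each closure circle uses exactly one string and one closure arc; so, after relabeling, the components are $L_1=s_1\cup u_1$ and $L_2=s_2\cup u_2$, and each string lies in a distinct component. This is the feature that lets the quasi-\emph{parallel} picture (one disk, both strings on its boundary) be replaced by the quasi-\emph{trivial} picture (two disjoint disks, one per string).

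For the forward direction, I would suppose $\mathcal{T}$ is unlinkable, fix an unlinking closure tangle $\mathcal{U}=(B',u_1\cup u_2)$ with $\mathcal{T}\cup\mathcal{U}$ a decomposition of the unlink $L_1\sqcup L_2$, and let $E_1,E_2$ be disjoint properly embedded disks in $S^3$ with $\partial E_i=L_i$. Put $S=\partial B$ and make each $E_i$ transverse to $S$. Since $\partial E_i=L_i$ meets $S$ only at the two endpoints of $s_i$, the $1$-manifold $E_i\cap S$ has exactly one arc component $\alpha_i$, joining those two endpoints, together with some simple closed curves. I would then take $P_i$ to be the component of $E_i\cap B$ containing $s_i$: because $E_i$ is a disk, $P_i$ is planar, its outer boundary is the circle $s_i\cup\alpha_i$ with $\alpha_i\subset S$, and its remaining boundary circles lie on $S$ (each bounding a disk in the ball $B'$). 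Thus $P_i$ is a (punctured) disk in $B$ co-bounded by $s_i$, and $P_1,P_2$ are disjoint because $E_1\cap E_2=\varnothing$; hence $s_1,s_2$ are quasi-trivial. Note that, unlike in Theorem \ref{theorem:main}, essentiality of $\mathcal{T}$ plays no role, since the two disks are automatically disjoint, so I do not expect to need the inessential/essential split nor an analogue of Lemma \ref{lemma:quasi-parallel}.

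For the converse, I would start from disjoint (punctured) disks $P_1,P_2$ in $B$ co-bounded by $s_1,s_2$, write $\partial P_i=(s_i\cup a_i)\cup c^i_1\cup\cdots\cup c^i_{n_i}$ with $a_i\subset S$ the spanning arc and the $c^i_j\subset S$ the punctures, and isotope the interiors of $a_1,a_2$ into $B'$ to obtain a closure tangle $\mathcal{U}=(B',u_1\cup u_2)$ with $L_i=s_i\cup u_i$. Exactly as in the converse part of Theorem \ref{theorem:main}, each puncture $c^i_j$ bounds a disk in $B'-(u_1\cup u_2)$; choosing these capping disks mutually disjoint (the $c^i_j$ are disjoint circles on the sphere $S$, disjoint from $a_1,a_2$, so their caps nest disjointly into the ball $B'$) and attaching them to $P_i$ produces disjoint disks $E_1,E_2$ bounded by $L_1,L_2$. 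Therefore $L_1\sqcup L_2$ is the unlink and $\mathcal{T}$ is unlinkable.

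The step I expect to be the main obstacle is the bookkeeping of the punctures. In the forward direction I must make sure the closed curves of $E_i\cap S$ really organize $P_i$ into an embedded punctured disk whose holes sit on $\partial B$ — this is where transversality and the planarity of $E_i$ are used. In the converse I must arrange the capping disks in $B'$ to be simultaneously disjoint from one another and from the newly created arcs $u_1,u_2$; this is the only place a little care is needed, and it is handled just as the corresponding assertion in the proof of Theorem \ref{theorem:main}.
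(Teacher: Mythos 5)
Your proposal is correct and follows essentially the same route as the paper: for the forward direction you intersect the two disjoint spanning disks of the unlink with $\partial B$ and take the pieces in $B$ containing $s_1,s_2$ as the (punctured) disks, and for the converse you push the arcs $b_i-s_i$ into the complementary ball and cap the punctures with disjoint disks there, exactly as the paper does (including the correct observation that, unlike Theorem \ref{theorem:main}, no essentiality dichotomy or minimality argument is needed). The only soft spot --- arranging the capping disks in $B'$ to be mutually disjoint and disjoint from $u_1\cup u_2$, including for punctures that separate $a_1$ from $a_2$ on $\partial B$ --- is glossed over at the same level of detail in the paper's own proof.
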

\begin{proof}
	Let $\mathcal{T}=(B, s_1\cup s_2)$ be a 2-string tangle.\\ 
	Suppose the strings of $\mathcal{T}$ are quasi-trivial and let $P_1$, $P_2$ be the corresponding disjoint punctured disks each co-bounds. Let $b_i$ be the boundary component of $P_i$ that contains $s_i$, for $i=1, 2$. Assuming $B$ in $S^3$, let $B'$ be the complement of $B$ and let $s_i'$ be the arc obtained by pushing the interior of $b_i-s_i$ into the interior of $B'$, so that $\mathcal{T}'=(B', s_1'\cup s_2')$ is a 2-string tangle. Let $L$ be the link in $S^3$ defined by $b_1\cup b_2$. Then $\mathcal{T}\cup\mathcal{T}'$ is a tangle decomposition of $L$. Since each component of $\partial P_i-b_i$ bounds a disk in $B'-(s_1'\cup s_2')$, we have that $b_1$ and $b_2$ bound disjoint disks. That is, $L$ is a trivial two component link and, therefore, $\mathcal{T}$ is unlinkable.\\ 
	Conversely, suppose that $\mathcal{T}$ is unlinkable with unlinking closure tangle $\mathcal{U}$. Let $b_1$ and $b_2$ be the components of $L$ and $O_1$, $O_2$ the corresponding disjoint disks they bound. We have that $b_i\cap B$ is $s_i$. Hence, $O_i\cap \partial B$ is a collection of simple closed curves and an arc $\alpha_i$ sharing the ends of $s_i$. Let $D_i$ be the (punctured) disk cut by $\alpha_i$ from $O_i$ that is co-bounded by $s_i$. Then, $s_1$ and $s_2$ are quasi-trivial strings. 
\end{proof}

\begin{theorem}
	A 2-string tangle is splittable if and only if it is quasi-inessential.
\end{theorem}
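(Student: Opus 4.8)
The plan is to mirror the geometric arguments used for the unknottable and unlinkable cases, replacing the spanning disk of the unknot (resp.\ the disjoint disks bounded by an unlink) by a \emph{splitting sphere} of the ambient link. Write $\mathcal{T}=(B,s_1\cup s_2)$, and let $a,b$ be the ends of $s_1$ and $c,d$ the ends of $s_2$ on $S=\partial B$.

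For the forward implication, suppose $\mathcal{T}$ is splittable, say $\mathcal{T}\cup\mathcal{U}$ is a tangle decomposition of a split link $L$, with $\mathcal{U}=(B',u_1\cup u_2)$. First I would observe that $L$ must be a two–component link whose components are $s_1\cup u_1$ and $s_2\cup u_2$: a tangle has no closed components, so each component of $L$ contains at least one of $s_1,s_2$, and a split link needs at least two components; hence there are exactly two, $u_1$ joins $a,b$ and $u_2$ joins $c,d$, and a splitting sphere $\Sigma$ necessarily separates $s_1$ from $s_2$. I would take $\Sigma$ disjoint from $L$ and transverse to $S$, and isotope it so that the number of circles of $\Sigma\cap S$ is minimal. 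Since $\Sigma$ is a sphere, every component of $\Sigma\cap B$ is a (punctured) disk disjoint from $s_1$ and $s_2$; I would then extract the component $D$ whose removal separates $s_1$ from $s_2$ in $B$ and argue it is the required quasi-inessential disk.

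The heart of this direction—and the step I expect to be hardest—is showing that after the minimization every boundary circle of $D$ separates $\{a,b\}$ from $\{c,d\}$. I would run innermost–disk arguments exactly parallel to those in the proofs of Theorem \ref{theorem:main} and Lemma \ref{lemma:quasi-parallel}: an innermost disk of $\Sigma$, lying in $B$ or in $B'$ and bounded by a circle $\gamma\subset S$, cuts the corresponding ball into two pieces, and since each string of $\mathcal{T}$ and of $\mathcal{U}$ is disjoint from $\Sigma$ and has its ends among the same four points $a,b,c,d$, it lies entirely in one piece. Thus the disk of $S$ bounded by $\gamma$ contains a full pair $\{a,b\}$, a full pair $\{c,d\}$, none, or all four ends. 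A circle enclosing no end (or all four) produces a ball meeting $L$ in nothing, contradicting minimality of $|\Sigma\cap S|$; a circle enclosing exactly one end would trap the corresponding string in a ball it cannot leave, contradicting that it is an arc with its other end outside. This rules out all but the $\{a,b\}\mid\{c,d\}$ separation. Propagating this from innermost circles outward—the delicate bookkeeping being that nesting on $\Sigma$ and nesting on $S$ differ—shows that every surviving circle separates the two pairs of ends, so each boundary circle of $D$ does, and $\mathcal{T}$ is quasi-inessential.

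For the converse, suppose $\mathcal{T}$ is quasi-inessential with separating (punctured) disk $D$, whose boundary circles all separate $\{a,b\}$ from $\{c,d\}$ and which leaves $s_1$ and $s_2$ in different components of $B\setminus D$. These boundary circles are mutually parallel curves on $S$ cutting it into a disk containing $\{a,b\}$, a disk containing $\{c,d\}$, and annuli between them. I would cap each boundary circle of $D$ by a disk in $B'$, chosen disjoint and nested according to this parallel structure, so that $\Sigma:=D\cup(\text{caps})$ is an embedded $2$–sphere separating $a,b$ from $c,d$ in $S^3$; the count $\chi(D)+\#\partial D=2$ confirms $\Sigma$ is a sphere. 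Placing an arc $u_1$ joining $a,b$ on the side of $\Sigma$ containing $s_1$ and an arc $u_2$ joining $c,d$ on the other side gives a closure tangle $\mathcal{U}=(B',u_1\cup u_2)$ for which $\mathcal{T}\cup\mathcal{U}$ is the two–component link $(s_1\cup u_1)\sqcup(s_2\cup u_2)$, split by $\Sigma$; hence $\mathcal{T}$ is splittable. When $D$ has no punctures this is immediate from Theorem \ref{lemma:side_by_side}(c), since $D$ then realizes $\mathcal{T}$ as a union of two $1$–string tangles, so the real content is in accommodating the punctures forced by non-rational tangles.
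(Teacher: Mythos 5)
Your proposal is correct and follows essentially the same route as the paper: your converse is the paper's construction (cap off the boundary circles of $D$ by nested disks in the complementary ball and close the strings up on the appropriate sides of the resulting sphere), and your forward direction is the paper's argument of taking a splitting sphere $\Sigma$ minimizing $|\Sigma\cap\partial B|$ and running innermost-circle/minimality arguments to force every intersection circle to separate the ends of $s_1$ from the ends of $s_2$. The only differences are organizational --- the paper first disposes of the inessential case and then uses essentiality to conclude that $\Sigma\cap B$ is a \emph{single} punctured disk, whereas you skip that case split and simply extract one separating component of $\Sigma\cap B$ --- and the details you flag as delicate (the surgery justifying minimality and the propagation from innermost circles to all circles) are exactly the steps the paper handles with its cut-and-paste argument.
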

\begin{proof}
	Suppose that $\mathcal{T}$ is quasi-inessential and let $D$ be the punctured disk as in the definition. Let $s_i'$ be an arc in $\partial B$ connecting $\partial s_i$, disjoint from $\partial D$. Then the circle $s_1\cup s_1'$ together with the circle $s_2\cup s_2'$ define a 2-component link $L$. Let $B'$ be the exterior of $B$ in $S^3$ and $\mathcal{T}'$ the 2-string tangle defined by $(B', s_1'\cup s_2')$ after pushing the interior of $s_i'$ into the interior of $B'$. Each boundary component of $\partial D$ bounds a disk in $B'-(s_1'\cup s_2')$ resulting on a sphere $S$ that separates the components of $L$. Then, $\mathcal{T}\cup\mathcal{T}'$ is a tangle decomposition of a split link, that is, $\mathcal{T}$ is splittable.\\
	Conversely, suppose that $\mathcal{T}$ is splittable with splitting closure tangle $\mathcal{U}$. In case there is a disk in $B$ separating $s_1$ and $s_2$, then this disk makes $\mathcal{T}$ quasi-inessential, by definition. Hence, we can assume that $\mathcal{T}$ is essential. Let $S$ be the split sphere of $L$ and consider the intersection of $S$ with $B$. Suppose that $|S\cap \partial B|$ is minimal among all such spheres $S$. Note that it is necessarily non-empty as $B$ intersects both sides of $S$. As $\mathcal{T}$ is essential, the innermost curves of $S\cap \partial B$ in $\partial B$ have the corresponding innermost disks in $B'$. Suppose that some curve of $S\cap \partial B$ is not innermost in $S$. Let $c$ be innermost among these curves. Hence, $c$ cuts from $S$ a disk intersecting $\partial B$ only in innermost curves. Let $E$ be the corresponding punctured disk in $B$. The boundary components of $E$ are parallel in $\partial B-\partial (s_1\cup s_2)$ and bound disks in $B'-(s_1'\cup s_2')$. By capping off the boundary components of $E$ with those disks, either the resulting sphere splits the link $L$, and in this case we have a contradiction with $|S\cap \partial B|$ being minimal, or it bounds a ball in the exterior of $L$, and by cutting $S$ along $c$ and pasting a disk it bounds in $B'-(s_1'\cup s_2')$ we obtain a split sphere of $L$ that reduces $|S\cap \partial B|$, contradicting its minimality. Hence, all curves of $S\cap \partial B$ are innermost in $S$ with corresponding innermost disks in $B'$ separating $s_1'$ from $s_2'$. That is, $S$ intersects $B$ at a single punctured disk $D$ that separates $s_1$ from $s_2$ in $B$ and whose boundary components separate $\partial s_1$ from $\partial s_2$ in $\partial B$.\end{proof}


\section{Algebraic properties for 2-string tangles} \label{section: algebraic properties}

In this section we consider 2-string tangles $(B,\sigma)$ with the ends of $\sigma$ being four fixed points in the boundary circle $b$ of the diagram disk as in the intercardinal directions $NW$, $SW$, $NE$ and $SE$, illustrated in Figure \ref{figure:tangleends}. We say that a disk in $\partial B$ is a \textit{west disk} (resp., \textit{east disk}) if the disk intersects $b$ in a single arc containing $NW$ and $SW$ (resp., $NE$ and $SE$) but not any of the points $NE$ and $SE$ (resp., $NW$ and $SW$). Similarly we define a \textit{north disk} and a \textit{south disk} in $\partial B$.

\begin{figure}
	\centering
	\includegraphics[scale=.4]{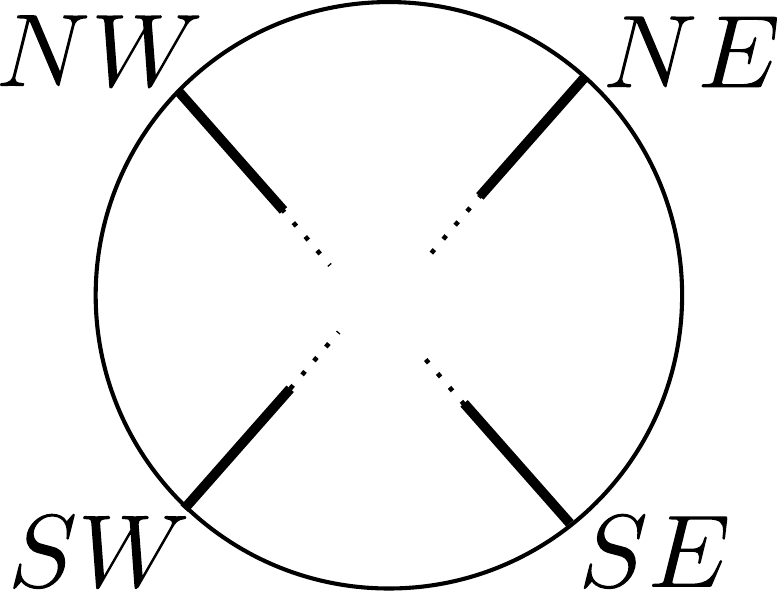}
	\caption{The ends of a tangle}
	\label{figure:tangleends}
\end{figure}

\begin{definition}
	We define the  \textit{sum} of two 2-string tangles $\mathcal{T}$ and $\mathcal{T}'$ by identifying an east disk in $\partial B$ to a west disk in $\partial B'$. The resulting tangle sum  is denoted $\mathcal{T+T'}$.\\
	We define the \textit{product} of $\mathcal{T}$ and $\mathcal{T}'$ by identifying a south disk of $\mathcal{T}$ to a north disk in $\mathcal{T}'$. The resulting tangle product is denoted $\mathcal{T*T'}$.\\
\end{definition}

Throughout this section we study unknottability, unlinkability and splittability of 2-string tangles up to strong equivalence and the behavior of these properties under tangle sum and product.

\begin{definition}
	Let $\mathcal{T}$ be a 2-string tangle. The {\em numerator closure} of $\mathcal{T}$ is the link $N(\mathcal{T})$ obtained by joining $NE$ and $NW$ by an unknotted arc and joining $SE$ and $SW$ by another unknotted arc;  the {\em denominator closure} of $\mathcal{T}$ is the link $D(\mathcal{T})$ obtained by joining $NW$ and $SW$ by an unknotted arc and joining $NE$ and $SE$ by another unknotted arc (See Figure \ref{figure:numerator_denominator}). 
\end{definition}

\begin{figure}
	\centering
	\includegraphics[scale=.3]{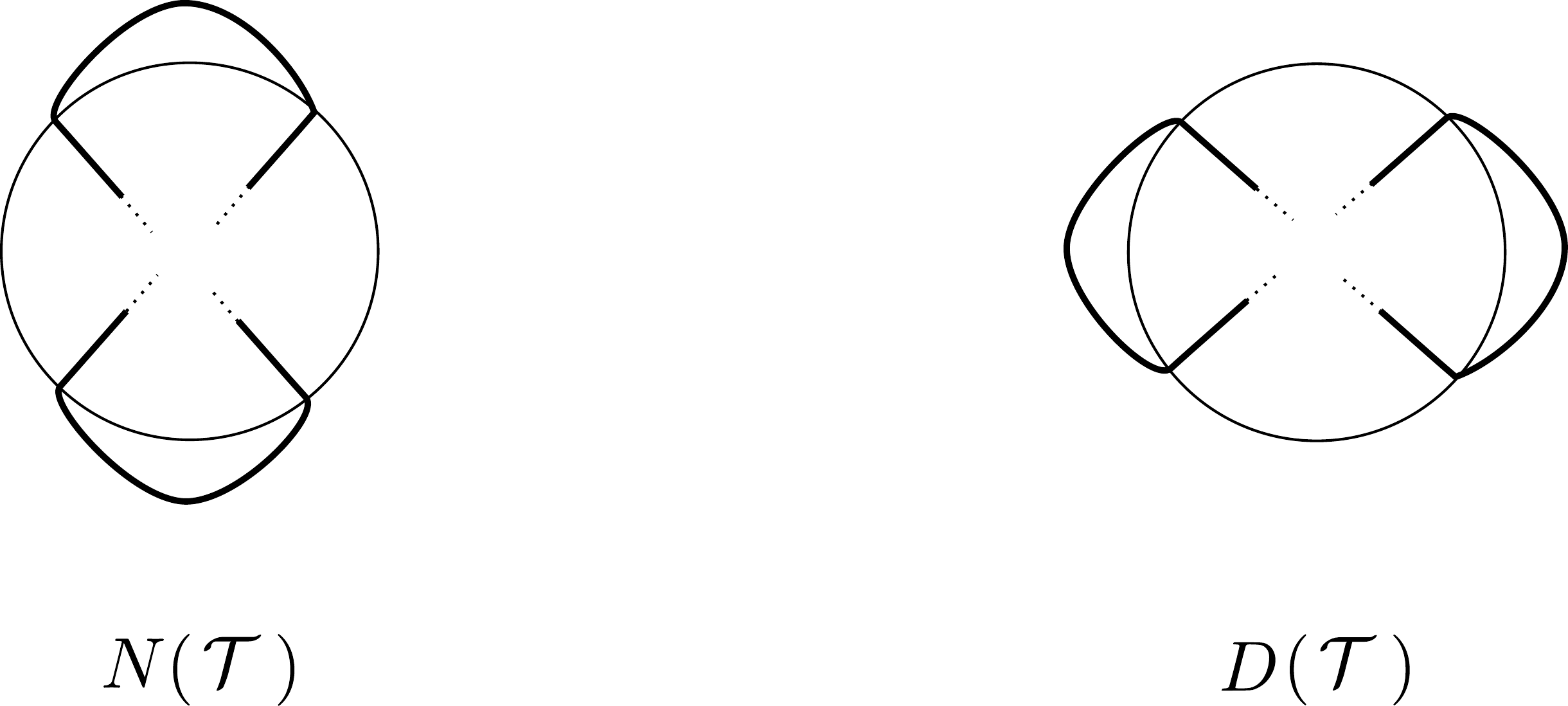}
	\caption{The numerator and denominator closures of a tangle.}
	\label{figure:numerator_denominator}
\end{figure}

\begin{definition}
	Let $\mathcal{T}$ be a $2$-string tangle.
	\begin{enumerate}
		\item The tangle obtained from $\mathcal{T}$ by a $180^\circ$ rotation around an axis perpendicular to the plane is denoted by $\bar{\mathcal{T}}$.
		\item The tangle obtained from $\mathcal{T}$ by a $90^\circ$ rotation around an axis perpendicular to the plane is denoted by $\mathcal{T}^\perp$.
	\end{enumerate}
\end{definition}

Note that a unknotting (resp. unlinking or splitting) closure tangle of a 2-string tangle $\mathcal{T}$ can be considered as a 2-string tangle $\mathcal{U}$ such that $N(\mathcal{T}+\mathcal{U})$ is the unknot (resp., the unlink or a split link). We have that $N({\mathcal T}+{\mathcal U})$ is equivalent to $D({\mathcal T}*\bar {\mathcal U})$ and for $\mathcal{U}$ a rational tangle, ${\mathcal U}=\bar {\mathcal U}$. Then, with $\mathcal{U}$ a rational tangle, $D({\mathcal T}*{\mathcal U})$ is also the unknot (resp., the unlink or a split link). From the next proposition, the unknotting or unlinking closure tangle of a 2-string tangle is a rational tangle, and we can assume that the splitting closure tangle of a 2-string tangle is a rational tangle.

\begin{proposition}\label{proposition:rational}
	Let $\mathcal{T}$ be an essential 2-string tangle. If $\mathcal{T}$ is unknottable (resp., unlinkable), then any unknotting (resp., unlinking) closure tangle is rational. 
	If $\mathcal{T}$ is splittable, then it has a rational splitting closure tangle.
\end{proposition}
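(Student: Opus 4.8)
The plan is to pass to double branched covers and translate the statement into the language of Dehn filling, using the standard dictionary between $2$-string tangles and $3$-manifolds with torus boundary. For a tangle $\mathcal S=(B,\sigma)$ write $M_{\mathcal S}$ for the double cover of $B$ branched over the two arcs $\sigma$; then $\partial M_{\mathcal S}$ is a torus, and a tangle decomposition $\mathcal T\cup\mathcal U$ of a link $K$ lifts to a splitting $\Sigma_2(S^3,K)=M_{\mathcal T}\cup_{T^2}M_{\mathcal U}$. I will use the Montesinos correspondence ($\mathcal S$ is rational if and only if $M_{\mathcal S}$ is a solid torus) together with Lickorish's theorem on prime tangles ($\mathcal S$ is prime, i.e.\ essential and without local knots, if and only if $M_{\mathcal S}$ is irreducible with incompressible boundary). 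The first observation is that if $\mathcal T\cup\mathcal U$ is the unknot or the unlink, then $K$ is locally trivial, so neither $\mathcal T$ nor $\mathcal U$ can carry a local knot; hence each of them is either rational or prime.

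For the unknot and the unlink cases I would argue uniformly. Since $\mathcal T$ is essential it is not rational, so $M_{\mathcal T}$ is irreducible with incompressible boundary $T^2$. Suppose, for contradiction, that the closure tangle $\mathcal U$ is also essential; being locally trivial it is then prime, so $M_{\mathcal U}$ also has incompressible boundary. Then $T^2$ is incompressible from both sides, hence incompressible in $\Sigma_2(S^3,K)$, which would embed $\mathbb Z^2=\pi_1(T^2)$ into $\pi_1(\Sigma_2(S^3,K))$. But $\Sigma_2$ of the unknot is $S^3$ and $\Sigma_2$ of the two-component unlink is $S^1\times S^2$, whose fundamental groups are $1$ and $\mathbb Z$; neither contains $\mathbb Z^2$, a contradiction. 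Therefore $\mathcal U$ is inessential, and being locally trivial it must be rational, since an irreducible manifold with compressible torus boundary is a solid torus. For the unknot there is an even shorter route: by Alexander's theorem the torus $T^2\subset S^3$ bounds a solid torus on at least one side, and since that side is not $M_{\mathcal T}$ it must be $M_{\mathcal U}$, so $\mathcal U$ is rational.

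For the splittable case only the \emph{existence} of a rational closure tangle is required, and here I would argue directly from the geometric characterization proved above, namely that a splittable tangle is quasi-inessential. Starting from the properly embedded (punctured) disk $D\subset B$ that separates the ends of $s_1$ from the ends of $s_2$, I would cap each boundary component of $D$ by a disk in the exterior $B'$ and join the four tangle ends by two boundary-parallel, unlinked arcs $u_1,u_2\subset B'$ lying on the two sides of these capping disks. The resulting closure tangle $\mathcal U=(B',u_1\cup u_2)$ is then the trivial tangle, hence rational, while the capped-off copy of $D$ is a sphere splitting $\mathcal T\cup\mathcal U$. This is exactly the closure produced in the proof of the characterization, so no new construction is needed beyond the remark that it is rational.

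The main obstacle is the unlink case, specifically justifying that an essential, locally trivial closure tangle cannot occur: this rests entirely on the tangle-to-manifold dictionary and on the incompressibility of $T^2$ in the glued-up cover. The delicate points I expect to check carefully are that ``essential'' in the sense used here (no separating disk) corresponds precisely to incompressibility of $\partial M_{\mathcal T}$, which uses the equivariant loop theorem to descend a compressing disk to a separating disk, and that closing into the unknot or unlink genuinely forbids local knots and connected summands, so that the reducible alternative in the dictionary is excluded. The splittable case is comparatively soft because we are free to choose the closure tangle; the real content lies in the word ``any'' in the unknot and unlink statements.
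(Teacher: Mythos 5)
Your proof is correct, but it takes a genuinely different route from the paper's. For the unknot/unlink part, the paper argues directly at the tangle level and very tersely: since the unknot and the unlink have no local knots, $\mathcal{U}$ is locally trivial, and ``as $\mathcal{T}$ is essential, $\mathcal{U}$ cannot be essential''; hence $\mathcal{U}$ is trivial, i.e.\ rational. Your double-branched-cover argument is precisely a proof of the step the paper leaves implicit: passing to $\Sigma_2(S^3,K)$, using Lickorish's prime-tangle dictionary, and using incompressibility of the lifted Conway torus to contradict $\pi_1(S^3)=1$ and $\pi_1(S^1\times S^2)=\mathbb{Z}$. This costs you heavier machinery (the equivariant loop theorem behind Lickorish's theorem, plus the Montesinos correspondence), but it makes rigorous and self-contained the exclusion of an essential closure tangle, which the paper treats as known. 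One small repair: your final justification that an inessential, locally trivial $\mathcal{U}$ is rational ``since an irreducible manifold with compressible torus boundary is a solid torus'' is not quite the right appeal, because irreducibility of $M_{\mathcal{U}}$ is not immediate when the ambient cover is the reducible manifold $S^1\times S^2$; but your own dichotomy already closes this (a locally trivial tangle is rational or prime, and an inessential tangle is not prime), or one can argue directly that the separating disk cuts $\mathcal{U}$ into two unknotted ball--arc pairs, so $\mathcal{U}$ is the trivial tangle. For the splittable part, the paper gives a self-contained innermost-curve argument on the split sphere, concluding that $\mathcal{U}$ is inessential and then replacing any knotted string by an unknotted one to produce a rational splitting closure tangle; you instead reuse the Section 3 characterization (splittable if and only if quasi-inessential) and observe that the closure tangle constructed there is trivial. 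That is legitimate---Section 3 precedes this proposition and its proof does not depend on it, so there is no circularity---and it is arguably more economical, though the paper's version keeps the proposition independent of the geometric characterization.
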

\begin{proof}
	Suppose that $\mathcal{T}$ is unknottable or unlinkable with corresponding closure tangle $\mathcal{U}$, that is, $N(\mathcal{T}+\mathcal{U})$ is the unknot or the unlink, respectively. Therefore, $\mathcal{U}$ cannot have local knots and, as $\mathcal{T}$ is essential, $\mathcal{U}$ cannot be essential. Hence, $\mathcal{U}$ is trivial, that is, it is a rational tangle.\\
	Suppose now that $\mathcal{T}=(B, \sigma)$ is splittable with  splitting closure tangle $\mathcal{U}$, that is, $N(\mathcal{T}+\mathcal{U})$ is a split link. Let $S$ be a split sphere for $N(\mathcal{T}+\mathcal{U})$. Consider an innermost curve $c$ of $S\cap \partial B$. As $S$ is a sphere, $c$ bounds a disk $D$ in $S$, disjoint from the other components of $S\cap \partial B$. As $\mathcal{T}$ is essential, $D$ cannot be in $B$. Hence $D$ is in the exterior of $B$. That is, $\mathcal{U}$ is inessential. Therefore, $D$ separates the two strings of $\mathcal{U}$. Hence, in case both are unknotted, the tangle is trivial, and in case at least one is knotted, it can be replaced in the same ball separated by $D$ in $B'$ by an unknotted string. The resulting tangle $\mathcal{V}$ is a rational tangle. As the strings of $\mathcal{V}$ are disjoint from $S$, and the end points of the strings of $\mathcal{U}$ and $\mathcal{V}$ are the same, we have that $N(\mathcal{T}+\mathcal{V})$ is a split link. That is, $\mathcal{V}$ is a rational splitting closure tangle of $\mathcal{T}$.
\end{proof}

From Property P and double branched covers of unknottable 2-string tangles being knot exteriors in $S^3$ we have the following theorem, which is a consequence of work by Bleiler and Scharlemann in \cite{BS-86} and \cite{BS-88}.

\begin{theorem}[Bleiler and Scharlemann \cite{BS-86}, \cite{BS-88}]\label{theorem:unique_unknotting} 
	If an unknottable 2-string tangle is essential, then it has a unique unknotting closure tangle.
\end{theorem}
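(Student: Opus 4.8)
The plan is to pass to double branched covers, where the assertion becomes a Dehn surgery uniqueness statement that Property P resolves. Let $M$ denote the double cover of $B$ branched over $\sigma$. Since $\mathcal{T}$ is a $2$-string tangle, $\partial M$ is a torus, namely the double cover of $\partial B$ branched over the four endpoints of $\sigma$. By Proposition \ref{proposition:rational} every unknotting closure tangle is rational, and a rational closure tangle $\mathcal{U}$ glued along $\partial B$ is determined, up to strong equivalence, by a single slope on the four-punctured sphere $\partial B-\partial\sigma$; this slope lifts to a slope $\gamma_{\mathcal{U}}$ on the torus $\partial M$, and the double branched cover of the closure $N(\mathcal{T}+\mathcal{U})$ is precisely the Dehn filling $M(\gamma_{\mathcal{U}})$ (the Montesinos correspondence). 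As the double branched cover of the unknot is $S^3$, the condition that $\mathcal{U}$ is an unknotting closure tangle translates into $M(\gamma_{\mathcal{U}})=S^3$. Thus it suffices to show that there is a unique slope $\gamma$ on $\partial M$ with $M(\gamma)=S^3$, since distinct rational fractions yield distinct slopes.

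Next I would use essentiality to pin down the topology of $M$. A $2$-string tangle is essential exactly when $\partial M$ is incompressible in $M$; in particular $M$ is not a solid torus. Fix one unknotting closure tangle $\mathcal{U}_1$ with slope $\gamma_1$, so $M(\gamma_1)=S^3$. Filling $M$ along $\gamma_1$ embeds $M$ in $S^3$ as the exterior of a knot $k_1$, the core of the filling solid torus, with $\gamma_1$ the meridian of $k_1$. Because $\partial M$ is incompressible, $M$ is not a solid torus, and hence $k_1$ is a nontrivial knot.

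Finally I would invoke Property P. Let $\mathcal{U}_2$ be any unknotting closure tangle, with slope $\gamma_2$, so that $M(\gamma_2)=S^3$ as well. Viewed inside $S^3=M(\gamma_1)$, the filling along $\gamma_2$ is a Dehn surgery on the nontrivial knot $k_1$ that recovers $S^3$. By Property P (equivalently, a nontrivial surgery on a nontrivial knot cannot yield $S^3$) this surgery must be trivial, so $\gamma_2$ is the meridian of $k_1$, whence $\gamma_2=\gamma_1$. Therefore the $S^3$-filling slope of $M$ is unique, and by the slope-to-tangle dictionary the unknotting closure tangle is unique up to strong equivalence.

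The main obstacle is really the setup rather than the surgery input, which is a black box once Property P is granted: one must justify carefully the dictionary between rational closure tangles (up to strong equivalence) and slopes on $\partial M$, including that $N(\mathcal{T}+\mathcal{U})$ is double covered by the filling $M(\gamma_{\mathcal{U}})$ and that distinct rational tangles give distinct slopes, together with the equivalence between essentiality of $\mathcal{T}$ and incompressibility of $\partial M$. A secondary subtlety is confirming that $k_1$ is genuinely nontrivial: this is exactly where ``$M$ is not a solid torus'' is used, and it is precisely the essentiality hypothesis that supplies it. Without essentiality, $M$ could be a solid torus, which admits infinitely many $S^3$-fillings; this is why uniqueness fails for inessential tangles and why the hypothesis cannot be dropped.
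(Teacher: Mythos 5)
Your proof is correct and is essentially the paper's own argument: the paper states this result purely as a citation to Bleiler and Scharlemann, justified by the one-line remark that it follows from ``Property P and double branched covers of unknottable 2-string tangles being knot exteriors in $S^3$,'' which is precisely your slope-uniqueness argument in the double branched cover. The only refinement worth noting is that the core $k_1$ is automatically strongly invertible (the covering involution of $S^3$ over the unknot inverts it), so the case of Property P actually proved by Bleiler and Scharlemann already suffices, without appealing to the general Property P theorem or to the Gordon--Luecke theorem that your parenthetical ``equivalently'' implicitly invokes.
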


\begin{remark}
	If a 2-string tangle has two different unknotting closure tangles, then it is rational.
\end{remark}

A similar result was obtained by Eudave-Mu\~noz in \cite{EM-88} for splittability, and hence unlinkability, of 2-string tangles.

\begin{theorem}[Eudave-Mu\~noz \cite{EM-88}]\label{theorem:unique_split} 
	If a 2-string tangle is unlinkable (resp., splittable) then it has a unique unlinking (resp., rational splitting) closure tangle.
\end{theorem}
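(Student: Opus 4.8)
The plan is to pass to double branched covers, where splittability becomes reducibility of a Dehn filling, and then to deduce uniqueness of the closure slope from the scarcity of reducing slopes on a manifold with incompressible torus boundary. First I would reduce the two assertions to one: since any unlinking closure tangle is already rational by Proposition \ref{proposition:rational}, and the unlink is a split link, the unlinkable case is the special case of the splittable case in which both sides of the splitting sphere cap off to unknots; so it suffices to establish uniqueness of the rational splitting closure tangle. I would also dispose of the inessential case separately: if $\mathcal{T}$ is inessential then, being a union of two $1$-string tangles, its structure is rigid enough that a splitting sphere, and hence the closure slope, can be read off directly, so the remaining work is for $\mathcal{T}$ essential.

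For essential $\mathcal{T}=(B,\sigma)$, let $M$ be the double cover of $B$ branched over $\sigma$. Since $\partial B$ is a sphere meeting $\sigma$ in four points, $\partial M$ is a torus, and a rational closure tangle $\mathcal{U}$ lifts to a solid torus; thus filling $\mathcal{T}$ by $\mathcal{U}$ corresponds to a Dehn filling of $M$ along a slope $\gamma(\mathcal{U})$, and because rational tangles are classified up to strong equivalence by this slope, uniqueness of $\mathcal{U}$ is equivalent to uniqueness of $\gamma(\mathcal{U})$. Essentiality of $\mathcal{T}$ makes $\partial M$ incompressible, and any local knots of $\mathcal{T}$ split off as canonical connected summands that are shared by every closure and are disjoint from the splitting sphere; I may therefore reduce to the case in which $M$ is irreducible with incompressible torus boundary. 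On the other side, a split link has reducible branched double cover---the two lifts of the splitting sphere bound an essential sphere, in fact producing an $S^1\times S^2$ summand---so $\gamma(\mathcal{U})$ is a slope yielding a reducible filling of $M$.

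The crux is then to show that such an $M$ has a unique reducing slope. I would take two rational splitting closures $\mathcal{U},\mathcal{V}$, lift their splitting spheres to reducing spheres $\widehat{S}_{\mathcal U},\widehat{S}_{\mathcal V}$ in the two fillings, and put the corresponding planar surfaces in $M$ into minimal position with respect to the filling solid tori; an innermost-disk and intersection analysis on these lifted spheres, using irreducibility of $M$ and incompressibility of $\partial M$, should force the two reducing slopes to coincide (equivalently, the distance between reducing slopes is zero). This is precisely the input supplied by the theory of reducible Dehn fillings; controlling how two reducing spheres can meet the filling solid tori is the same phenomenon underlying the uniqueness results of Gordon--Litherland and Scharlemann, and I expect this intersection bookkeeping to be the main obstacle. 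Invoking that machinery rather than reproving it is the natural route. Once $\gamma(\mathcal{U})=\gamma(\mathcal{V})$ is established, the classification of rational tangles by slope gives $\mathcal{U}=\mathcal{V}$, and translating back through the branched cover completes the proof.
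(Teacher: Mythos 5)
First, a remark on the comparison itself: the paper contains no proof of this statement --- it is quoted from Eudave-Mu\~noz \cite{EM-88} --- so your proposal has to stand entirely on its own, and its crux step does not. You assert that an irreducible $3$-manifold $M$ with incompressible torus boundary has a unique reducing filling slope, and that this ``is precisely the input supplied by the theory of reducible Dehn fillings'' (Gordon--Litherland, Scharlemann). No such theorem exists. The sharpest general result, due to Gordon and Luecke, is that two reducing slopes satisfy $\Delta(\gamma_1,\gamma_2)\le 1$; it gives distance at most one, not zero, and the bound is sharp (examples with two reducible fillings at distance one are known --- Eudave-Mu\~noz and Wu even constructed hyperbolic ones). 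So the machinery you invoke cannot close the argument: showing $\Delta=0$ is exactly the hard part, and it is not a formal consequence of Dehn filling theory. A second, related defect: reducibility of the double branched cover of a closure corresponds to that closure being split \emph{or composite} (equivariant sphere theorem), so ``uniqueness of the reducing slope'' is strictly stronger than the theorem you are proving. Your argument nowhere rules out that a second filling is reducible because that closure is a connected sum rather than a split link, and general intersection bookkeeping with reducing spheres certainly cannot distinguish the two cases.

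The approach can be repaired, but by homology rather than by filling machinery, using a feature you mention and then discard: a splitting sphere is disjoint from the link and has link components on both sides, so each of its two lifts is a \emph{non-separating} sphere in $M(\gamma)$ (this is your ``$S^1\times S^2$ summand''), whence $b_1(M(\gamma))\ge 1$. Now $b_1(M)=1$: it is at least $1$ by ``half lives, half dies'', and at most $1$ because some rational closure of $\mathcal{T}$ is a knot, the double branched cover of a knot is a rational homology sphere (the determinant of a knot is odd, hence nonzero), and filling drops $b_1$ by at most one. Since $H_1(M(\gamma);\mathbb{Q})\cong H_1(M;\mathbb{Q})/\langle[\gamma]\rangle$, a filling of such an $M$ can have $b_1\ge 1$ only if $[\gamma]$ is torsion in $H_1(M;\mathbb{Z})$, and exactly one slope --- the rational longitude --- is torsion. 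Hence all rational splitting (and in particular unlinking) closure slopes coincide, and the slope/rational-tangle correspondence finishes the proof. Note that this argument needs neither irreducibility of $M$ nor incompressibility of $\partial M$, so it also covers the inessential case you waved off, and it automatically sidesteps the split-versus-composite ambiguity, since a connected-sum sphere lifts to a separating sphere, which carries no homology.
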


From work of Scharlemann \cite{S-85} we have that an essential 2-string tangle cannot be splittable and unknottable simultaneously.

\begin{theorem}[Scharlemann \cite{S-85}]\label{theorem:split_unknot}
	If a 2-string tangle is unknottable and splittable, then it is a rational tangle.
\end{theorem}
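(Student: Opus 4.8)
The plan is to dispose of the statement by splitting into two cases according to whether $\mathcal T$ is essential, which reduces everything to the essential case — the genuine content of Scharlemann's theorem. \emph{Inessential case:} if $\mathcal T$ is inessential and unknottable, then by the characterization of unknottable tangles (Theorem~\ref{theorem:main}) its strings are quasi-parallel, so Lemma~\ref{lemma:quasi-parallel} applies and $\mathcal T$ is trivial, hence rational. Thus in this case the conclusion already follows from unknottability alone, and splittability is not even needed.

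\emph{Essential case:} here I would show the two hypotheses are contradictory, i.e.\ an essential $\mathcal T$ cannot be simultaneously unknottable and splittable, and this is where I pass to double branched covers. By Proposition~\ref{proposition:rational}, $\mathcal T$ has a rational unknotting closure $\mathcal U$ and a rational splitting closure $\mathcal V$. Let $M=\Sigma_2(\mathcal T)$ be the double branched cover of $(B,\sigma)$; its boundary is the torus double-covering the four-punctured sphere $\partial B$. Taking double branched covers turns each tangle sum into a torus gluing, $\Sigma_2(N(\mathcal T+\mathcal U))=M\cup_{\partial M}\Sigma_2(\mathcal U)$ and likewise for $\mathcal V$, and since a rational tangle has a solid torus as double branched cover, both $\Sigma_2(\mathcal U)$ and $\Sigma_2(\mathcal V)$ are solid tori. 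Consequently both closures correspond to Dehn fillings of the \emph{single} manifold $M$ along two slopes $\alpha,\beta$ on $\partial M$.

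From the unknotting closure, $\Sigma_2(\text{unknot})=S^3$, so $\alpha$-filling $M$ yields $S^3$; hence $M=S^3\smallsetminus N(k)$ is the exterior of a knot $k$, with $\alpha$ the meridian. Since $\mathcal T$ is essential, an equivariant compressing-disk argument shows $\partial M$ is incompressible in $M$, so $M$ is not a solid torus and $k$ is a \emph{nontrivial} knot. From the splitting closure, $\Sigma_2(\text{split link})$ is reducible: a two-component split link $K_1\sqcup K_2$ has double branched cover $\Sigma_2(K_1)\#\Sigma_2(K_2)\#(S^1\times S^2)$, which contains a non-separating sphere and has $b_1\ge 1$. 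Since filling a knot exterior yields $b_1\ge 1$ only along the rational longitude, $\beta$ must be the Seifert longitude of $k$; that is, $\Sigma_2(\text{split link})$ is obtained by $0$-surgery on $k$, and it is reducible. But by Gabai's resolution of the Property~R conjecture, $0$-surgery on a nontrivial knot in $S^3$ is irreducible, a contradiction. Therefore the essential case does not occur, and combining with the inessential case, $\mathcal T$ must be rational.

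\emph{Main obstacle:} the routine parts are the inessential reduction and the bookkeeping that converts double branched covers of tangle sums into Dehn fillings of $M$. The crux is the essential case, which rests on two points: first, correctly identifying the double branched cover of a split link as carrying an $S^1\times S^2$ summand, so as to pin down $\beta$ as the $0$-slope via $b_1$; and second, invoking the deep input that $0$-surgery on a nontrivial knot is irreducible. It is precisely this last $3$-manifold-topology ingredient — rather than the naive and much messier attempt to directly intersect the quasi-parallel disk with the quasi-inessential disk inside $B$, an approach obstructed by the punctures of those disks — that makes the argument succeed.
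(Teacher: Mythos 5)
Your proposal is correct, but it is necessarily a different route from the paper's, because the paper gives no proof of this statement at all: Theorem~\ref{theorem:split_unknot} is quoted from Scharlemann \cite{S-85}, whose original argument is a long combinatorial, Morse-theoretic analysis of band sums (and which predates Gabai's theorem, so it could not have invoked it). Your argument is the now-standard branched-cover proof, and its steps check out: the inessential case follows from Theorem~\ref{theorem:main} together with Lemma~\ref{lemma:quasi-parallel}, both of which are proved in the paper independently of the present theorem, so there is no circularity; in the essential case, Proposition~\ref{proposition:rational} makes both closure tangles rational, the Montesinos trick converts the two closures into Dehn fillings of the single manifold $M=\Sigma_2(B,\sigma)$ along slopes on the torus $\partial M$, the $S^3$-filling identifies $M$ as a knot exterior with meridian $\alpha$, the $S^1\times S^2$ summand in the double branched cover of a split link pins the other slope to the Seifert longitude by homology, and Gabai's irreducibility of $0$-surgery on a nontrivial knot yields the contradiction. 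The one step you compress is the incompressibility of $\partial M$: the equivariant loop theorem produces a compressing disk that is either disjoint from its translate or invariant, and only the first case descends directly to a disk separating the strings; the invariant case needs its own (easy) elimination, since its quotient would be either a disk meeting $\sigma$ in one point, which is impossible by a parity count against an essential curve in the four-punctured sphere, or a boundary-parallelism disk for one string, whose regular neighborhood again yields a separating disk. As for what each approach buys: yours is short, conceptual, and consistent with the toolbox the paper itself uses for Theorem~\ref{theorem:Montesinos} (double branched covers following Montesinos \cite{M-73}), at the cost of importing a very deep input, namely taut foliations and Property~R; Scharlemann's proof is far more intricate but elementary in its prerequisites, and Taylor \cite{T-08}, also cited in the paper, gives yet another proof by different (sutured-manifold type) techniques.
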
 

In the next theorem, we determine when a Montesinos tangle is unknottable/unlinkable. This result extends, for instance, Theorem 5 of \cite{KL-11}, which gives all rational unknotting closures tangles for a rational tangle. From Theorem 2.2 of \cite{ES-90} we also obtain, in particular, the rational unknotting/unlinking closure tangles of a rational tangle.

\begin{theorem}\label{theorem:Montesinos}
	
	Let  ${\mathcal T}=\tangle{p_1}{q_1}+\tangle{p_2}{q_2}+\cdots+\tangle{p_n}{q_n}$, with $n\geq 2$ and $q_i> 1$. Then,
	\begin{enumerate}
		\item $\mathcal T$ is unknottable if and only if $n=2$ and $p_1q_2+p_2q_1\equiv\pm1\,(\!\!\!\!\mod q_1q_2)$.
		\item $\mathcal T$ is unlinkable if and only if $n=2$, $q_1=q_2$ and $p_1+p_2\equiv0\,(\!\!\!\!\mod q_1)$. 
		\item $\mathcal T$ is splittable if and only if it is unlinkable.
	\end{enumerate}
	Moreover, if  $\mathcal T$  is unknottable (resp. unlinkable, splittable), then its unknotting (resp. unlinking, rational splitting) closure tangle ${\mathcal U}$ is integral and, if $-1<\dfrac{p_1}{q_1},\dfrac{p_2}{q_2}<1$, then ${\mathcal U}$ is either $[0]$, $[1]$ or $[-1]$.
\end{theorem}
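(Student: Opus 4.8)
The plan is to pass to double branched covers and read off all three properties from the Seifert geometry of the resulting Montesinos link. By Proposition~\ref{proposition:rational} it suffices to search for a \emph{rational} closure tangle $\mathcal U=\tangle{p}{q}$ (with $\gcd(p,q)=1$), so that unknottability, unlinkability and splittability of $\mathcal T$ are respectively equivalent to the existence of such a $\mathcal U$ for which $N(\mathcal T+\mathcal U)=N\bigl(\tangle{p_1}{q_1}+\cdots+\tangle{p_n}{q_n}+\tangle{p}{q}\bigr)$ is the unknot, the two-component unlink, or a split link. First I would record the dictionary with the double branched cover $\Sigma:=\Sigma_2\bigl(N(\mathcal T+\mathcal U)\bigr)$: the closure is the unknot iff $\Sigma=S^3$ (positive solution of the Smith conjecture), it is the two-component unlink iff $\Sigma=S^1\times S^2$, and it is split iff $\Sigma$ is reducible (a splitting sphere lifts to an essential sphere, and conversely by the equivariant sphere theorem).

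The geometric heart is that $\Sigma$ is the orientable Seifert fibered space over $S^2$ with Seifert invariants $(p_1,q_1),\dots,(p_n,q_n),(p,q)$, whose exceptional fibers have orders equal to those of $q_1,\dots,q_n,q$ exceeding $1$. I would then invoke two standard facts: such a space with at least three exceptional fibers is irreducible and is neither a lens space nor $S^1\times S^2$; and the only reducible ones are $S^1\times S^2$ and $\mathbb{RP}^3\#\mathbb{RP}^3$, both having at most two exceptional fibers. Since $n\ge 2$ and every $q_i>1$, each of the three properties forces at most two of the numbers $q_1,\dots,q_n,q$ to exceed $1$, hence $n=2$ and $q=1$; in particular $\mathcal U=[p]$ is integral, which already proves the first ``moreover'' assertion. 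For the precise arithmetic I would use the order of the first homology,
\[
\bigl|H_1(\Sigma)\bigr|=\bigl|p_1q_2q+p_2q_1q+p\,q_1q_2\bigr|,
\]
which for $q=1$ equals $|p_1q_2+p_2q_1+p\,q_1q_2|$ and coincides with the determinant of $N(\mathcal T+[p])$.

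With this in hand the three statements become elementary. For (a), $\Sigma=S^3$ is equivalent to $\Sigma$ being a lens space with trivial homology, i.e.\ $p_1q_2+p_2q_1+p\,q_1q_2=\pm1$ for some integer $p$, which is exactly $p_1q_2+p_2q_1\equiv\pm1\pmod{q_1q_2}$ (the case $\mathbb{RP}^3\#\mathbb{RP}^3$ is excluded since its homology has order $4$). For (b), $\Sigma=S^1\times S^2$ is equivalent to $H_1(\Sigma)$ being infinite, i.e.\ $p_1q_2+p_2q_1+p\,q_1q_2=0$ for some integer $p$; reducing modulo $q_1$ and modulo $q_2$ and using $\gcd(p_i,q_i)=1$ forces $q_1=q_2$, whereupon the equation becomes $p_1+p_2\equiv0\pmod{q_1}$. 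For (c), an unlink is split, so unlinkable implies splittable; conversely, if $\mathcal T$ is splittable then $\Sigma$ is reducible, hence $S^1\times S^2$ or $\mathbb{RP}^3\#\mathbb{RP}^3$, but the double branched cover of a split link always carries an $S^1\times S^2$ summand, ruling out $\mathbb{RP}^3\#\mathbb{RP}^3$; thus $\Sigma=S^1\times S^2$ and $\mathcal T$ is unlinkable by (b). Finally, the last ``moreover'' follows by a size estimate: when $|p_i|<q_i$ one has $|p_1q_2+p_2q_1|\le 2q_1q_2-q_1-q_2<2q_1q_2$, so the integer $p$ solving the equation $=\pm1$ or $=0$ satisfies $|p|<2$, giving $\mathcal U\in\{[0],[1],[-1]\}$.

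I expect the main obstacle to lie not in the arithmetic but in the passage between the cover $\Sigma$ and the link $N(\mathcal T+\mathcal U)$: namely, justifying cleanly that $\Sigma=S^3$ and $\Sigma=S^1\times S^2$ detect the unknot and the unlink (Smith conjecture and the equivariant sphere theorem), and, in the splittable direction of (c), isolating $\mathbb{RP}^3\#\mathbb{RP}^3$ as the reducible Seifert fibered space that \emph{cannot} cover a split link. Identifying the correct Seifert invariants of $\Sigma$ from the tangle data, and hence the homology formula above, is the other place where care is needed.
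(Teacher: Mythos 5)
Your proposal is correct and follows essentially the same route as the paper: both reduce to a rational closure tangle via Proposition \ref{proposition:rational}, identify the double branched cover as the Seifert fibered space over $S^2$ with invariants $p_i/q_i$ and $p/q$ via Montesinos, use the classification of Seifert fibered spaces (at least three exceptional fibers forces irreducibility and excludes $S^3$ and $S^1\times S^2$) to conclude $n=2$ and $q=1$, and then settle (a), (b), (c) arithmetically, with the same size estimate giving $\mathcal U\in\{[0],[1],[-1]\}$. The only cosmetic differences are that you phrase the $S^3$/lens-space condition through $|H_1|$ rather than the paper's Euler number equation $\frac{p_1}{q_1}+\frac{p_2}{q_2}+p=\pm\frac{1}{q_1q_2}$, and in (c) you exclude $\mathbb{RP}^3\#\mathbb{RP}^3$ via the $S^1\times S^2$ summand in the branched cover of a split link, whereas the paper notes that a reducible orientable Seifert fibration with base $S^2$ must be $S^2\times S^1$.
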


\begin{proof}
	Since ${\mathcal T}$ is not rational, then, by Proposition \ref{proposition:rational}, it has a rational unknotting/unlinking/splitting closure tangle ${\mathcal U}=\tangle{p}{q}$. The double cover of $S^3$ branched over  the Montesinos knot $N({\mathcal T}+{\mathcal U})$ is a Seifert fibered manifold $M$ \cite{M-73}, with invariant $$(0;p_1/q_1,\ldots,p_n/q_n,p/q).$$ By the classification of Seifert fibered spaces, if there are at least $3$ exceptional fibers, then $M$ is irreducible and not $S^3$. Therefore, if $\mathcal T$ is unknottable/unlinkable/splittable, then $n=2$ and $q=1$.
	\begin{enumerate}
		\item $\mathcal T$ is unknottable if and only if $M$ is $S^3$, which is equivalent to 
		$$\dfrac{p_1}{q_1}+\dfrac{p_2}{q_2}+p=\pm\dfrac{1}{q_1q_2},$$
		or, similarly, $p_1q_2+p_2q_1+pq_1q_2=\pm 1$. Moreover, if $-1<\dfrac{p_1}{q_1},\dfrac{p_2}{q_2}<1$, then clearly $-2<p<2$.
		\item $\mathcal T$ is unlinkable if and only if $M$ is $S^2\times S^1$, which is equivalent to  $$\dfrac{p_1}{q_1}+\dfrac{p_2}{q_2}+p=0,$$
		or, similarly, $q_1=q_2$ and $p_1+p_2+pq_1=0$. Moreover, if $-1<\dfrac{p_1}{q_1},\dfrac{p_2}{q_2}<1$, then clearly $-2<p<2$.
		\item $\mathcal T$ is splittable if and only if $M$ is reducible, which happens exactly when $M$ is $S^2\times S^1$, since $M$ is orientable and the fibration base is $S^2$.
	\end{enumerate}
\end{proof}

The following theorem describes the effect of adding a rational tangle to an unknottable/unlinkable/splittable 2-string tangle. 

\begin{theorem}\label{theorem:new_component}
	Let $\mathcal{T}$ be a 2-string tangle with unknotting closure tangle $\tangle{r}{s}$. Then, 
	\begin{enumerate}
		\item $\mathcal{T}+\tangle{p}{q}$ is unknottable if and only if $q=1$ or ($q=s$ and $p\equiv r\,(\!\!\!\!\mod q)$).
		\item $\mathcal{T}*\tangle{p}{q}$ is unknottable  if and only if  $p=1$ or ($p=r$ and $q\equiv s\,(\!\!\!\!\mod p))$.
	\end{enumerate} 
	
	A similar result holds for unlinkable/splittable tangles.
\end{theorem}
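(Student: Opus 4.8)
The plan is to treat the sum statement (a) first and then deduce the product statement (b) by a rotation, reducing everything to a single arithmetic criterion for when a sum of two rational tangles is again rational. First I would dispose of the degenerate case: if $\mathcal T$ is not essential then, being unknottable, its strings are quasi-parallel by Theorem~\ref{theorem:main} and hence $\mathcal T$ is trivial by Lemma~\ref{lemma:quasi-parallel}, so $\mathcal T$ is rational and the assertions reduce to elementary computations with rational tangles. I may therefore assume $\mathcal T$ is essential, in which case Theorem~\ref{theorem:unique_unknotting} guarantees that $\tangle{r}{s}$ is its \emph{unique} unknotting closure tangle.

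Now for (a). By definition $\mathcal T+\tangle{p}{q}$ is unknottable if and only if there is a tangle $\mathcal U$, which is rational by Proposition~\ref{proposition:rational}, with $N\!\left(\mathcal T+\tangle{p}{q}+\mathcal U\right)$ the unknot. Writing $\mathcal W=\tangle{p}{q}+\mathcal U$, this says $N(\mathcal T+\mathcal W)$ is the unknot; since $\mathcal T$ is essential, Proposition~\ref{proposition:rational} forces $\mathcal W$ to be rational and Theorem~\ref{theorem:unique_unknotting} forces $\mathcal W=\tangle{r}{s}$. Thus the question becomes purely arithmetic: for which $\tangle{p}{q}$ does there exist a rational $\mathcal U$ with $\tangle{p}{q}+\mathcal U=\tangle{r}{s}$?

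The crux is the following criterion, which I expect to be the main obstacle and which I would establish via double branched covers. The sum $\tangle{p}{q}+\tangle{a}{b}$ of two rational tangles in lowest terms is rational if and only if $q=1$ or $b=1$; indeed, by Montesinos \cite{M-73} its double branched cover is Seifert fibered over the disk with exceptional fibers of orders $q$ and $b$, and this is a solid torus (equivalently, the tangle is rational) exactly when at most one of these orders exceeds $1$, since two exceptional fibers of order $\geq 2$ produce an incompressible boundary torus. Granting this, if $q=1$ then $\tangle{p}{q}=[p]$ is integral and $[p]+\tangle{r-ps}{s}=\tangle{r}{s}$, so an admissible $\mathcal U$ always exists; if $q\neq 1$ then $\mathcal U=[a]$ must be integral and $\tangle{p}{q}+[a]=\tangle{p+aq}{q}$ equals $\tangle{r}{s}$ precisely when $q=s$ and $p\equiv r \pmod q$. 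This yields exactly the two alternatives in (a).

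Finally, I would obtain (b) from (a) by the $90^\circ$ rotation $\perp$, which is an ambient isotopy of the ball and hence preserves unknottability by Theorem~\ref{theorem:equivalence}, interchanges sum and product, and sends $\tangle{p}{q}$ to $\tangle{-q}{p}$. Using the identity $N(\mathcal A+\mathcal U)=D(\mathcal A*\bar{\mathcal U})$ together with $N(\mathcal X)=D(\mathcal X^\perp)$, one checks that $\mathcal T^\perp$ has unknotting closure tangle $\tangle{-s}{r}$; applying part (a) to $\mathcal T^\perp+\tangle{p}{q}^\perp=\mathcal T^\perp+\tangle{-q}{p}$ and translating the resulting congruences back through $\perp$ gives $p=1$ or ($p=r$ and $q\equiv s\pmod p$), as claimed. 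The unlinkable and splittable versions follow by the same scheme, replacing Theorem~\ref{theorem:unique_unknotting} by the uniqueness in Theorem~\ref{theorem:unique_split} and, in the splittable case, first using Proposition~\ref{proposition:rational} to replace the closure tangle $\mathcal W$ by a rational one with the same endpoints before invoking uniqueness.
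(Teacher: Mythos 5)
Your proposal follows essentially the same route as the paper's own proof: reduce the question to the equation $\tangle{p}{q}+\mathcal{U}=\tangle{r}{s}$ via Proposition \ref{proposition:rational} and the uniqueness of the unknotting closure tangle (Theorem \ref{theorem:unique_unknotting}), settle the resulting arithmetic, and deduce (b) from (a) by the $90^\circ$ rotation sending $\tangle{p}{q}$ to $\tangle{-q}{p}$ and the closure $\tangle{r}{s}$ to $\tangle{-s}{r}$. In fact you are more careful than the paper at two points: the paper simply asserts ``$U=[n]$'' with no justification, whereas your criterion that a sum of rational tangles $\tangle{p}{q}+\tangle{a}{b}$ in lowest terms is rational if and only if $q=1$ or $b=1$, proved via the Seifert-fibered structure of the double branched cover, is exactly the missing argument (and is in the same spirit as the paper's proof of Theorem \ref{theorem:Montesinos}); you also verify the converse implication, which the paper leaves implicit.

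The one place where you diverge from the paper is your treatment of inessential $\mathcal{T}$, and there your claim cannot be carried out: for rational $\mathcal{T}$ the statement of the theorem is actually false, so no ``elementary computation with rational tangles'' can establish it. For example, $\mathcal{T}=[0]$ has $\tangle{1}{1}$ as an unknotting closure tangle, and with $(r,s)=(1,1)$ the criterion in (a) would declare $[0]+\tangle{1}{2}=\tangle{1}{2}$ not unknottable, yet every rational tangle is unknottable. The theorem implicitly assumes $\mathcal{T}$ is essential --- precisely the hypothesis needed to invoke Theorem \ref{theorem:unique_unknotting}, and one the paper's own proof tacitly uses as well. So your essential-case argument is the whole (correct) proof; the inessential case should be excluded by hypothesis rather than ``handled,'' and your instinct that essentiality is the crux of the matter was right, even though your patch for it does not work.
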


\begin{proof} Since the tangle $\mathcal{T}$ has unknotting closure tangle $\tangle{r}{s}$, then $N\left(\mathcal{T}+\tangle{r}{s}\right)=D\left(\mathcal{T}*\tangle{r}{s}\right)={\rm unknot}$.
	
	\begin{enumerate}
		\item Suppose that $\mathcal{T}+\tangle{p}{q}$ is unknottable, with unknotting closure tangle $U$. 
		
		Then  $N\left(\mathcal{T}+\tangle{p}{q}+ U\right)={\rm unknot}$, hence $\tangle{p}{q}+ U=\tangle{r}{s}$, by Theorem \ref{theorem:unique_unknotting}. Therefore, $U=[n]$ and $\tangle{p}{q}=\tangle{r}{s}+[-n]=\tangle{r-ns}{s}$. Since $(p,q)=(r,s)=1$, we conclude that $q=s$ and $p\equiv r\,(\!\!\!\!\mod q)$.
		\item  Since $D\left(\mathcal{T}*\tangle{r}{s}\right)={\rm unknot}$, then $N\left(\mathcal{T}^\perp+\tangle{-s}{r}\right)={\rm unknot}$.
		
		The tangle $\mathcal{T}*\tangle{p}{q}$ is unknottable if and only if $\mathcal{T}^\perp+\tangle{-q}{p}$ is unknottable.
		
		This is equivalent, by (a), to $p=r$ and $q\equiv s\,(\!\!\!\!\mod p)$.
\end{enumerate} \end{proof}

The following corollary characterizes unknottable, unlinkable and splittable tangles among algebraic tangles with three rational components.

\begin{corollary}\label{corollary:algebraic}
	The tangle $\left(\tangle{p_1}{q_1}+\tangle{p_2}{q_2}\right)*\tangle{p_3}{q_3}$, with $q_1,q_2,p_3\neq 1$, is unknottable if and only if $\tangle{p_1}{q_1}+\tangle{p_2}{q_2}$ is unknottable, with unknotting closure tangle $[p_3]$, and $q_3\equiv1\,(\!\!\!\!\mod p_3)$.
	
	A similar result holds for unlinkable/splittable tangles.
\end{corollary}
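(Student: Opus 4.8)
The plan is to read this corollary as the specialization of Theorem \ref{theorem:new_component}(b) to the case where the left factor $\mathcal{T}=\tangle{p_1}{q_1}+\tangle{p_2}{q_2}$ is a two-term Montesinos tangle, using Theorem \ref{theorem:Montesinos} only to control the \emph{shape} of the unknotting closure tangle of $\mathcal{T}$. First I would observe that, since $q_1,q_2\neq 1$, the tangle $\mathcal{T}$ is non-rational and essential, so Theorem \ref{theorem:unique_unknotting} guarantees that its unknotting closure tangle (when it exists) is unique; moreover the ``Moreover'' clause of Theorem \ref{theorem:Montesinos} forces that closure tangle to be \emph{integral}, i.e. of the form $[r]=\tangle{r}{1}$ with $s=1$. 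This is the one external input beyond Theorem \ref{theorem:new_component}, and it is exactly what collapses the generic congruence $q\equiv s\pmod p$ into the clean congruence $q_3\equiv 1\pmod{p_3}$.

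For the \emph{sufficiency} direction, I would assume the right-hand side: that $\mathcal{T}$ is unknottable with unknotting closure tangle $[p_3]$ (so $r=p_3$, $s=1$) and that $q_3\equiv 1\pmod{p_3}$. Applying Theorem \ref{theorem:new_component}(b) with $\tangle{r}{s}=\tangle{p_3}{1}$, the criterion ``$p=r$ and $q\equiv s\pmod p$'' becomes ``$p_3=p_3$ and $q_3\equiv 1\pmod{p_3}$,'' which holds by hypothesis, whence $\mathcal{T}*\tangle{p_3}{q_3}$ is unknottable.

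For the \emph{necessity} direction, I would assume $\mathcal{T}*\tangle{p_3}{q_3}$ is unknottable. Viewing this product as a tangle decomposition, Corollary \ref{corollary:subtangle}(a) yields that the factor $\mathcal{T}$ is itself unknottable; as above its unknotting closure tangle is some integral $[r]$. Theorem \ref{theorem:new_component}(b) then gives the alternative ``$p_3=1$, or ($p_3=r$ and $q_3\equiv 1\pmod{p_3}$).'' Since $p_3\neq 1$ by hypothesis, the first alternative is excluded, leaving $p_3=r$ — that is, the unknotting closure tangle of $\mathcal{T}$ is $[p_3]$ — together with $q_3\equiv 1\pmod{p_3}$, which is precisely the right-hand side.

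The unlinkable and splittable versions then follow verbatim, because Theorem \ref{theorem:Montesinos} guarantees that the unlinking (respectively rational splitting) closure tangle of $\mathcal{T}$ is also integral, and both Corollary \ref{corollary:subtangle} and Theorem \ref{theorem:new_component} carry the stated analogues for those settings. The only real point requiring care — and the place I would double-check — is verifying that the hypotheses needed to invoke Theorem \ref{theorem:new_component} are genuinely present: namely that $\mathcal{T}$ is essential and non-rational so that its closure tangle is both unique and integral. The hypothesis $q_1,q_2\neq 1$ secures exactly this, and the role of $p_3\neq 1$ is solely to discard the trivial branch $p=1$ of the biconditional; everything else is substitution into the congruences of Theorem \ref{theorem:new_component}(b).
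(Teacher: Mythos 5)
Your proof is correct and follows essentially the same route as the paper: Corollary \ref{corollary:subtangle}(a) to get unknottability of $\tangle{p_1}{q_1}+\tangle{p_2}{q_2}$, the ``Moreover'' clause of Theorem \ref{theorem:Montesinos} to force the closure tangle to be integral, and Theorem \ref{theorem:new_component}(b) to convert this into the stated congruence. If anything, your write-up is slightly more careful than the paper's, since you make explicit both directions of the biconditional and the role of the hypothesis $p_3\neq 1$ in discarding the branch $p=1$ of Theorem \ref{theorem:new_component}(b).
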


\begin{proof}
	Suppose that $\left(\tangle{p_1}{q_1}+\tangle{p_2}{q_2}\right)*\tangle{p_3}{q_3}$ is unknottable. By Theorem \ref{theorem:subtangle} Corollary \ref{corollary:subtangle}, $\tangle{p_1}{q_1}+\tangle{p_2}{q_2}$ is also unknottable, and, by Theorem \ref{theorem:Montesinos}, its unknotting closure tangle is an integral tangle $[n]$. Then, by Theorem \ref{theorem:new_component}(b), $\left(\tangle{p_1}{q_1}+\tangle{p_2}{q_2}\right)*\tangle{p_3}{q_3}$ is unknottable  if and only if $n=p_3$ and $q_3\equiv1\,(\!\!\!\!\mod p_3)$.
\end{proof}

\begin{theorem}
	
	Let $\mathcal{T}=(B, \sigma)$ be an essential 2-string tangle and $D$ a disk intersecting $\sigma$ and separating the ends of $\sigma$ in two sets of two points. Suppose that the tangles separated from $T$ by $D$ are essential. Then $\mathcal{T}$ is unknottable if and only if it has an unknotting closure  which is a rational tangle with the same slope as $\partial D$.
	
	A similar result holds for unlinkable/splittable tangles.

\end{theorem}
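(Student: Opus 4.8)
The plan is to prove the nontrivial direction, that unknottability forces the closure slope to be that of $\partial D$, by passing to the double branched cover and analyzing the surface that $D$ lifts to; the reverse implication is immediate, since a rational unknotting closure (of whatever slope) exhibits $\mathcal{T}$ as unknottable by definition. For the forward implication, first note that $\mathcal{T}$ is essential, so by Proposition \ref{proposition:rational} any unknotting closure $\mathcal{U}$ is rational, and by Theorem \ref{theorem:unique_unknotting} it is unique; it therefore suffices to show that this $\mathcal{U}$ has the same slope as $\partial D$. Since $D$ meets $\sigma$ in two points (so that the two pieces $\mathcal{T}_1,\mathcal{T}_2$ cut off by $D$ are $2$-string tangles) and separates the four ends into two pairs, both $\mathcal{T}_i$ are essential by hypothesis.

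Next I would pass to the double branched cover. Let $M=\Sigma_2(B,\sigma)$ denote the double cover of $B$ branched over $\sigma$, a compact orientable $3$-manifold whose boundary torus $\partial M$ is the double cover of the four-punctured sphere $\partial B$ branched over the four ends. Taking branched covers of the trivial knot $N(\mathcal{T}+\mathcal{U})$ yields $M\cup_{\mu}V=S^3$, where $V=\Sigma_2(\mathcal{U})$ is a solid torus and $\mu$ is the slope on $\partial M$ along which its meridian is glued; under the standard dictionary between slopes on the four-punctured sphere $\partial B$ and slopes on $\partial M$, this $\mu$ corresponds to the slope of $\mathcal{U}$. Thus $M$ is the exterior of a knot $\kappa\subset S^3$, nontrivial because $\mathcal{T}$ is essential, and $\mu$ is the meridian of $\kappa$. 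The disk $D$, meeting the branch set in two points, lifts to an annulus $A=\Sigma_2(D)\subset M$ that separates $M$ into the pieces $M_i=\Sigma_2(\mathcal{T}_i)$, with $\partial A\subset\partial M$ of a single slope $\gamma$, the lift of the slope of $\partial D$. The goal becomes $\gamma=\mu$.

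The heart of the argument is that $A$ is an essential separating annulus with both complementary pieces nontrivial. Indeed, a compression or a boundary-parallelism of $A$ would force the corresponding $\mathcal{T}_i$ to be inessential, so $A$ is essential and, since each $\mathcal{T}_i$ is essential, neither $M_i$ is a solid torus. I would then invoke the classification of essential separating annuli in knot exteriors: such an annulus with boundary on $\partial E(\kappa)$ is either the cabling annulus of a cable (or torus) knot, in which case one of the two complementary pieces is a solid torus, or it has meridional boundary and $\kappa$ is a connected sum with the two pieces the exteriors of the summands. The first alternative is excluded because neither $M_i$ is a solid torus; hence $\gamma=\mu$, which by the slope dictionary says exactly that $\mathcal{U}$ has the slope of $\partial D$.

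The main obstacle is precisely this dichotomy: showing that, once both sides are essential, the only way a knot exterior is split by an essential annulus is as a connected sum with meridional gluing, so the cabling alternative (in which the closure slope could differ from $\partial D$) cannot occur. As a consistency check, the Montesinos decomposition of Theorem \ref{theorem:Montesinos}, whose closure is merely integral rather than of decomposing slope, does not satisfy the present hypotheses, since there the two pieces are rational and hence inessential. Finally, for the unlinkable and splittable statements I would run the same scheme, replacing $S^3$ by the double branched cover of the two-component unlink, namely $S^1\times S^2$, respectively by a reducible manifold; the lifted annulus is again essential with nontrivial complementary pieces, and the analogous classification of essential annuli pins the filling slope to $\gamma$, giving that the unlinking (respectively rational splitting) closure has the slope of $\partial D$.
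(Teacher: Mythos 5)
Your proof of the unknot case is correct, but it takes a genuinely different route from the paper's. The paper never leaves $S^3$: since the unknot admits no decomposition into two essential tangles, $\mathcal{T}_2+\mathcal{U}$ is inessential; an innermost-curve/outermost-arc argument, using essentiality of $\mathcal{T}_2$, pushes a disk separating the strings of $\mathcal{T}_2+\mathcal{U}$ off $\partial B_2$, after which its boundary lies in $\partial B_1-D$, encircles the two points of $D\cap\sigma$, and hence is parallel to $\partial D$; as this disk separates the strings of the rational tangle $\mathcal{U}$, the slope of $\mathcal{U}$ is the slope of $\partial D$. You trade this elementary cut-and-paste for the double branched cover plus the classification of essential annuli in nontrivial knot exteriors (cabling annulus with a solid-torus side, or meridional composing annulus), which is indeed a known theorem you may cite. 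Two supporting claims deserve care: first, ``neither $M_i$ is a solid torus'' and ``$\kappa$ is nontrivial'' both rest on the standard fact that the double branched cover of a $2$-string tangle is a solid torus if and only if the tangle is rational; second, incompressibility of $A$ is obtained more cheaply than by your remark about compressions forcing $\mathcal{T}_i$ inessential (which, as stated, needs the equivariant disk theorem): compressing $A$ would yield compressing disks for $\partial M$ of slope $\gamma$, which is impossible because a nontrivial knot exterior has incompressible boundary. With these points filled in, your unknot case is complete, at the price of heavier machinery than the paper's.

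The genuine gap is your last sentence, i.e., the unlinkable and splittable cases. There the filled manifold is $S^1\times S^2$, respectively the double branched cover of a split link, which is a nontrivial connected sum; so $M$ is no longer a knot exterior in $S^3$ but the exterior of a knot in a reducible manifold, and in the splittable case $M$ itself need not be irreducible (a string of $\mathcal{T}$ may carry a local knot). There is no citable ``analogous classification'' of essential annuli in this setting, and the Dehn-filling results that do exist at this level of generality (distance bounds between a reducing filling slope and the boundary slope of an essential annulus) bound $\Delta(\gamma,\mu)$ but never force the equality $\gamma=\mu$ that you need. So these two cases would require a genuinely new argument, for instance capping $A$ off to a torus in $S^1\times S^2$ and analyzing its compressions by hand, rather than a citation. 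This is exactly where the paper's downstairs approach is more robust: its disk argument transfers essentially verbatim to the unlink and split cases, with the splitting sphere playing the role of the disk bounded by the unknot (compare the proof of Proposition \ref{proposition:rational}), which is why the paper can honestly say ``a similar result holds,'' whereas your reduction cannot.
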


\begin{proof}
	
	If $\mathcal{T}$ has any unknotting closure tangle, then $\mathcal{T}$ is unknottable, by definition.\\
	Suppose now that $\mathcal{T}$ is unknottable, with unknotting closure tangle $\mathcal{U}$. Let $\mathcal{T}_i=(B_i, \sigma_i)$ denote the tangles separated by $D$ from $\mathcal{T}$, for $i=1, 2$. Consider the tangle $\mathcal{T}_2+\mathcal{U}$. As $\mathcal{T}_1$ is essential we have that $\mathcal{T}_2+\mathcal{U}$ is inessential. Let $E$ be a disk separating the strings of $\mathcal{T}_2+\mathcal{U}$. Note that $\partial E$ is in $\partial B_1$. As $\mathcal{T}_2$ is essential, in case $E$ intersects $\partial B_2$ we can reduce the number of components of $E\cap \partial B_2$ with an innermost curve/outermost arc argument, until $E$ is disjoint of $\partial B_2$.  Then, $\partial E$ is in $\partial B_1-D$ and separates the points of $D\cap \sigma_1$ from the other two end points of $\sigma_1$. Hence, the tangle $\mathcal{U}$, with respect to the fixed end points of $\mathcal{T}$, has the same slope as a rational tangle as $\partial D$ in $\partial B$.
\end{proof}

For a 2-string tangle $(B, \sigma)$, we say that a disk $D$ properly embedded in $B$ is a \textit{meridian} (resp., \textit{longitude}) {\em disk} if $\partial D$ separates $\partial B$ into a west and east disk (resp., into a north and south disk). In case $D$ is a meridian or longitude disk of $B$ with fixed boundary, we have the following corollary.

\begin{corollary}
	
	Let $\mathcal{T}=(B, \sigma)$ be an essential 2-string tangle and $D$ a meridian (resp., longitude) disk of $B$. Suppose that the tangles separated from $\mathcal{T}$ by $D$ are essential. Then $\mathcal{T}$ is unknottable if and only if $D(\mathcal{T})$ (resp., $N(\mathcal{T})$) is the unknot.
	
	A similar result holds for unlinkable/splittable tangles.
\end{corollary}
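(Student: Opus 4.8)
The plan is to read off the corollary from the preceding theorem, after verifying its hypotheses and pinning down the slope of $\partial D$ in each of the two cases. First I would observe that a meridian (resp.\ longitude) disk meets the hypotheses of the theorem: by definition $\partial D$ separates $\partial B$ into a west and an east disk (resp.\ a north and a south disk), and hence separates the four ends of $\sigma$ into the two pairs $\{NW,SW\}$, $\{NE,SE\}$ (resp.\ $\{NW,NE\}$, $\{SW,SE\}$), that is, into two sets of two points. Since the two tangles cut off by $D$ are assumed to be $2$-string tangles, each string of $\sigma$ must cross $D$, so $D$ indeed intersects $\sigma$; together with the essentiality assumptions, the theorem applies.

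The theorem then gives that $\mathcal{T}$ is unknottable if and only if it has a rational unknotting closure tangle $\mathcal{U}$ whose slope equals the slope of $\partial D$. Since a rational tangle is determined up to strong equivalence by its slope, I would next identify $\mathcal{U}$ explicitly. For a meridian disk the curve $\partial D$ separates $\{NW,SW\}$ from $\{NE,SE\}$, which is the slope of the $\infty$-tangle $[\infty]$, so $\mathcal{U}=[\infty]$; for a longitude disk $\partial D$ separates $\{NW,NE\}$ from $\{SW,SE\}$, the slope of the $0$-tangle $[0]$, so $\mathcal{U}=[0]$.

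It then remains to translate the closure $N(\mathcal{T}+\mathcal{U})$ into a numerator or denominator closure of $\mathcal{T}$. Here I would use the elementary tangle-calculus identities $N(\mathcal{T}+[\infty])=D(\mathcal{T})$ and $N(\mathcal{T}+[0])=N(\mathcal{T})$, consistent with the relation $N(\mathcal{T}+\mathcal{U})\approx D(\mathcal{T}*\bar{\mathcal{U}})$ recorded earlier. Since $\mathcal{U}$ being an unknotting closure tangle means exactly that $N(\mathcal{T}+\mathcal{U})$ is the unknot, the meridian case yields that $\mathcal{T}$ is unknottable if and only if $D(\mathcal{T})$ is the unknot, and the longitude case that $\mathcal{T}$ is unknottable if and only if $N(\mathcal{T})$ is the unknot. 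The unlinkable and splittable statements follow by the same steps, using the unlinkable/splittable versions of the preceding theorem (and, for splittability, the reduction to a rational closure tangle of Proposition~\ref{proposition:rational}) and replacing the unknot by the unlink and a split link.

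The step I expect to be the main obstacle is the slope bookkeeping: making sure the meridian disk is paired with the denominator closure (through $[\infty]$) and the longitude disk with the numerator closure (through $[0]$), and not the reverse. Once the conventions for the labelling $\{NW,SW,NE,SE\}$, for the slope of a rational tangle, and for the identities $N(\mathcal{T}+[\infty])=D(\mathcal{T})$, $N(\mathcal{T}+[0])=N(\mathcal{T})$ are fixed consistently, the argument is a routine specialization of the theorem.
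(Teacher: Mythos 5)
Your proposal is correct and follows exactly the route the paper intends: the paper states this corollary without a separate proof, as an immediate specialization of the preceding theorem, and your write-up simply makes that specialization explicit — checking the theorem's hypotheses, identifying the slope of a meridian (resp.\ longitude) disk with the rational tangle $[\infty]$ (resp.\ $[0]$), and using $N(\mathcal{T}+[\infty])=D(\mathcal{T})$, $N(\mathcal{T}+[0])=N(\mathcal{T})$. Your slope bookkeeping matches the paper's conventions (west/east disk $\leftrightarrow$ denominator closure, north/south disk $\leftrightarrow$ numerator closure), so there is nothing to correct.
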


In particular, in the previous corollary, when $D$ intersects $\sigma$ at two points, we have the following corollary.

\begin{corollary}
	Let $\mathcal{T}_1$ and $\mathcal{T}_2$ be essential 2-string tangles. Then
	\begin{enumerate}
		\item $\mathcal{T}_1+\mathcal{T}_2$ is unknottable if and only if $D(\mathcal{T}_1+\mathcal{T}_2)={\rm unknot}$.
		\item $\mathcal{T}_1*\mathcal{T}_2$ is unknottable if and only if $N(\mathcal{T}_1*\mathcal{T}_2)={\rm unknot}$.
	\end{enumerate} 
	
	A similar result holds for unlinkable/splittable tangles.
	
\end{corollary}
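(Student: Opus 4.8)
The plan is to obtain this corollary as a direct specialization of the preceding corollary, the one phrased in terms of meridian and longitude disks. First I would observe that the gluing disk $D=B_1\cap B_2$ of the sum $\mathcal{T}_1+\mathcal{T}_2$ (resp. of the product $\mathcal{T}_1*\mathcal{T}_2$) is exactly a meridian (resp. longitude) disk of the combined ball $B=B_1\cup B_2$. Indeed, by the definition of the tangle sum the west ends $NW,SW$ come from $\mathcal{T}_1$ and the east ends $NE,SE$ from $\mathcal{T}_2$, so $\partial D$ separates $\partial B$ into a west disk and an east disk; for the product the north ends $NW,NE$ come from $\mathcal{T}_1$ and the south ends $SW,SE$ from $\mathcal{T}_2$, so $\partial D$ separates $\partial B$ into a north and a south disk. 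In either case $D$ meets $\sigma$ in exactly two points, which is precisely the extra hypothesis ``$D$ intersects $\sigma$ at two points'' of the previous corollary, and the two tangles cut off by $D$ are literally $\mathcal{T}_1$ and $\mathcal{T}_2$, which are essential by assumption.

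With this identification in place, the previous corollary applies provided the ambient tangle is itself essential, and it then yields that $\mathcal{T}_1+\mathcal{T}_2$ is unknottable if and only if $D(\mathcal{T}_1+\mathcal{T}_2)$ is the unknot, and that $\mathcal{T}_1*\mathcal{T}_2$ is unknottable if and only if $N(\mathcal{T}_1*\mathcal{T}_2)$ is the unknot (the meridian disk pairs with the denominator closure, the longitude disk with the numerator closure, matching the two items). The unlinkable and splittable versions follow verbatim, since the previous corollary already carries those variants.

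The one point that genuinely requires an argument, and the step I expect to be the main obstacle, is checking that the sum (resp. product) of two essential tangles is again essential, so that the hypothesis ``$\mathcal{T}$ essential'' is met. I would prove this by a standard innermost-disk/outermost-arc argument. If $E$ were a properly embedded disk in $B$ separating the two strings of $\mathcal{T}_1+\mathcal{T}_2$, I would isotope $E$ so that $|E\cap D|$ is minimal. A circle of $E\cap D$ that is innermost in $E$ bounds a subdisk of $E$ disjoint from $\sigma$, and such circles can be removed by isotopy (a circle encircling a single point of $D\cap\sigma$ would give an impossible compression of a strand, while punctureless or doubly-punctured ones are boundary-parallel in $D-\sigma$). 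An outermost arc $\beta$ of $E\cap D$ in $E$ cuts off a subdisk $d_E\subset B_i$ disjoint from $\sigma$; if $\beta$ is inessential in $D-\sigma$ the disk $d_E$ gives a boundary-compression reducing $|E\cap D|$, whereas if $\beta$ separates the two points of $D\cap\sigma$ then $d_E$ is a properly embedded disk in $B_i$ disjoint from $\sigma_i$ separating the two strings of $\mathcal{T}_i$, contradicting the essentiality of $\mathcal{T}_i$. After this cleanup $E\cap D=\emptyset$, so $E$ lies in $B_1$ or $B_2$ and separates the two strings there, again contradicting essentiality of $\mathcal{T}_1$ or $\mathcal{T}_2$. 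Hence the sum is essential, and the same argument applies to the product.

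Finally, as an alternative route to item (b) I would note that it follows from item (a) by the $90^\circ$ rotation. Since $(\mathcal{T}_1*\mathcal{T}_2)^\perp$ is a sum of $\mathcal{T}_1^\perp$ and $\mathcal{T}_2^\perp$, with each $\mathcal{T}_i^\perp$ essential, and since rotation is an isotopy of $B$ that preserves unknottability while interchanging the numerator and denominator closures, applying item (a) to $\mathcal{T}_1^\perp+\mathcal{T}_2^\perp$ gives exactly the statement of item (b) after undoing the rotation.
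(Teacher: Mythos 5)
Your proposal follows the same route as the paper's own proof: the gluing disk of the sum (resp.\ product) is a meridian (resp.\ longitude) disk meeting $\sigma$ in exactly two points, the pieces it cuts off are $\mathcal{T}_1$ and $\mathcal{T}_2$, and the paper likewise disposes of item (b) by the $90^\circ$ rotation. The genuine difference is that you isolate and prove the hypothesis the paper passes over in silence: the theorem and the meridian/longitude corollary both require the \emph{ambient} tangle to be essential, whereas the present statement only assumes $\mathcal{T}_1$ and $\mathcal{T}_2$ are essential, so one needs that the sum (or product) of two essential $2$-string tangles is again essential. The paper simply calls (a) ``an immediate consequence of the theorem,'' implicitly taking this standard fact for granted; your innermost-circle/outermost-arc lemma is exactly the missing ingredient, and with it your proof is complete where the paper's is telegraphic. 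One sub-step of that lemma is justified incorrectly, although its conclusion is fine: an innermost circle of $E\cap D$ encircling both points of $D\cap\sigma$ is indeed parallel to $\partial D$ in $D-\sigma$, but it cannot be ``removed by isotopy'' --- pushing an interior circle of $E$ across $\partial D\subset\partial B$ would force the interior of the properly embedded disk $E$ through $\partial B$. The correct treatment is the one you already use for essential arcs: the innermost subdisk $E'\subset B_i$ is disjoint from $\sigma_i$ and $\partial E'$ separates the two points of $D\cap\sigma_i$ from the two ends of $\sigma_i$ on $\partial B$, so either $E'$ separates the two strings of $\mathcal{T}_i$ (contradicting essentiality of $\mathcal{T}_i$), or some string of $\mathcal{T}_i$ would have endpoints on both sides of $E'$ while being disjoint from it (impossible); hence such circles simply do not occur. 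With that local repair your argument is correct, and it is in fact more complete than the proof in the paper.
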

\begin{proof}
	The sum of the two 2-string tangles corresponds to a decomposition through a meridian disk in the resulting tangle. Hence, (a) is an immediate consequence of the theorem. The statement (b) is equivalent to (a) by a $90^\circ$  rotation of the tangles.
\end{proof}

\section{Colorings}\label{section:colorings}
In this section we make further remarks on the coloring invariants of knots being an obstruction for a tangle to be unknottable. This observations follow in line with a remark by Silver in Krebes paper \cite{K} on Fox colorings being able to detect when a 2-string tangle is not unknottable, and with the work of Kauffman and Lopes in \cite{KL-20} on involutory quandle colorings, which include Fox colorings, being able to detect when a 2-string tangle is not unknottable. Here we make observations that these statements cannot be extended to oriented quandles. We also determine a necessary condition from coloring invariants for unlinkability and splittability of 2-string tangles.

\begin{definition}
	A {\em quandle} is a set $X$ with an operation $\triangleright$ such that
	\begin{itemize}
		\item $\forall x\in X,x\triangleright x=x$;
		\item $\forall y,z\in X,\exists! x\in X: z=x\triangleright y$; 
		\item $\forall x,y,z\in X,(x\triangleright y)\triangleright z=(x\triangleright z)\triangleright(y\triangleright z)$.
	\end{itemize} 
\end{definition}

\begin{definition}
	A {\em coloring} of an oriented knot or tangle by the quandle $(X,\triangleright)$ is a labeling of its arcs by elements of $X$ is such a way that at a crossing where the right underarc has label $x$, the overarc has label $y$, and the left underarc has label $z$, then $z=x\triangleright y$, as in Figure \ref{figure:crossing}.
	\begin{figure}[ht]
		\centering
		\includegraphics[scale=.4]{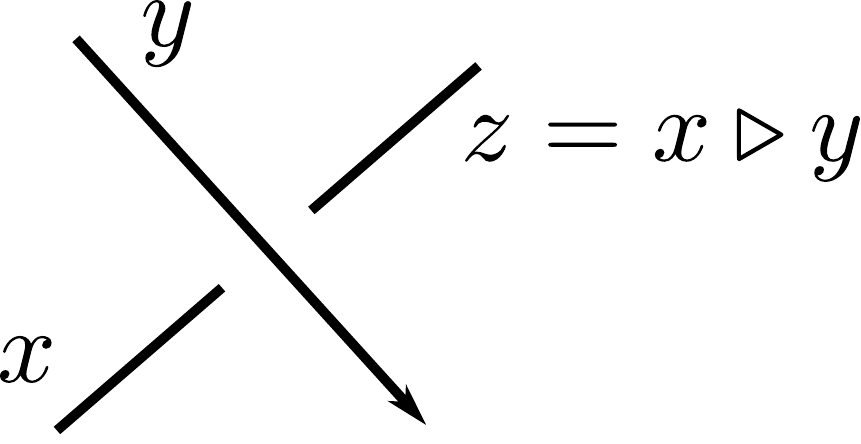}
		\caption{The consistency of the quandle operation and the labeling at each crossing.}
		\label{figure:crossing}
	\end{figure}
	
	We say that a quandle coloring of an oriented knot or tangle is {\em nontrivial} if it uses more than one color; we say that it is {\em trivial} otherwise. A tangle coloring such that all boundary arcs have the same color is called a $c$-coloring, and otherwise is called a $d$-coloring.
	We say also that an oriented knot is {\em polychromatic} if it has a nontrivial coloring and that an oriented tangle is {\em polychromatic} if it has a nontrivial $c$-coloring. Otherwise, the knot is said to be {\em monochromatic} and the tangle is said to be {\em monochromatic} if every $c$-coloring of the tangle is trivial.
	Since Reidemeister moves preserve  the existence of a nontrivial coloring, then an oriented knot or tangle is polychromatic if and only if any equivalent oriented knot or tangle is polychromatic. 
\end{definition}

If $(X,\triangleright)$ is a quandle, then $(X,\triangleleft)$, where $z\triangleleft y$ is the unique $x$ such that $z=x\triangleright y$, is also a quandle and a coloring of an oriented knot or tangle by $(X,\triangleright)$ determines a coloring of the knot or tangle with the opposite orientation by $(X,\triangleleft)$. However, if we change the orientation of some strings of a tangle while keeping the orientation of the remaining strings, it may happen that the original oriented tangle is polychromatic and the new oriented tangle is monochromatic. If the operations $\triangleleft$ and $\triangleright$ coincide, the quandle is called {\em involutory} or {\em unoriented}.
An oriented tangle has a nontrivial coloring by an involutory quandle  $(X,\diamond)$ if and only if it has a nontrivial coloring by $(X,\diamond)$, for any orientation of its strings. The {\em dihedral quandle} $R_n$ is the involutory quandle $\left(\mathbb{Z}/{n\mathbb{Z}},\diamond\right)$, where $x\diamond y\equiv 2y-x\,(\!\!\!\!\mod n)$ and $n$ is a  positive integer (for the sake of knot or tangle nontrivial colorings, we may consider only the case where $n$ is prime). We consider also the quandle $R_0=\left(\mathbb{Z},\diamond\right)$, where $x\diamond y=2y-x$. 


\begin{theorem}\label{theorem:monochromatic}
	If a tangle $\mathcal{T}$ is unknottable, then  $\mathcal{T}$ is monochromatic for some orientation.
\end{theorem}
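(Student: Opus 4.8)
The plan is to exploit two facts: that the unknot admits only trivial quandle colorings, and that a nontrivial $c$-coloring of $\mathcal{T}$ can always be extended across any closure tangle by coloring the closure monochromatically. First I would record that the unknot is monochromatic: every diagram of the unknot is equivalent to the zero-crossing diagram consisting of a single closed arc, which can only carry a single element of the quandle, and since the existence of a nontrivial coloring is preserved under equivalence, every diagram of the unknot admits only trivial colorings.

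Next, since $\mathcal{T}$ is unknottable, I would fix an unknotting closure tangle $\mathcal{U}$ so that $\mathcal{T}\cup\mathcal{U}$ is a tangle decomposition of the unknot $K$, and choose a diagram of $K$ restricting to a diagram of $\mathcal{T}$ and a diagram of $\mathcal{U}$. Orienting the single circle $K$ induces a coherent orientation on all of its strings, and in particular an orientation of $\mathcal{T}$; this is the orientation for which I will establish monochromaticity.

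The core step is a proof by contradiction. Suppose that, with this induced orientation, $\mathcal{T}$ is polychromatic, i.e. it admits a nontrivial $c$-coloring. By definition, all boundary arcs of $\mathcal{T}$ then carry a single common color $c$. I would extend this coloring to all of $K$ by coloring every arc of $\mathcal{U}$ with $c$. This extension is a legitimate quandle coloring of $K$: inside $\mathcal{T}$ it is the given coloring, inside $\mathcal{U}$ every crossing is monochromatic and hence consistent because $x\triangleright x=x$, and along the gluing sphere the two sides agree, both being $c$. The resulting coloring of $K$ uses more than one color, so $K$ is polychromatic, contradicting that the unknot is monochromatic. Hence $\mathcal{T}$ is monochromatic for the induced orientation.

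The only genuinely delicate points are bookkeeping ones: verifying that the common boundary color of a $c$-coloring matches the monochromatic coloring of $\mathcal{U}$ across the decomposition, and that orienting the connected circle $K$ really does induce the well-defined orientation of $\mathcal{T}$ used in the definition of a $c$-coloring. I expect no substantive obstacle beyond these checks, since the argument relies only on the idempotence axiom $x\triangleright x=x$ and on the triviality of all colorings of the unknot.
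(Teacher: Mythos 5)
Your proposal is correct and follows essentially the same argument as the paper: take the orientation of $\mathcal{T}$ induced by the unknot closure $K=N(\mathcal{T}+\mathcal{U})$, extend any nontrivial $c$-coloring of $\mathcal{T}$ across $\mathcal{U}$ by the common boundary color, and contradict the fact that the unknot admits only trivial colorings. Your write-up merely makes explicit the verifications (idempotence $x\triangleright x=x$ inside $\mathcal{U}$, invariance of polychromaticity under Reidemeister moves) that the paper leaves implicit.
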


\begin{proof}
	Let $\mathcal{U}$ be a unknotting closure tangle of $\mathcal{T}$. Suppose that $\mathcal{T}$ is polychromatic for the orientation induced by the unknot $K=N(\mathcal{T}+\mathcal{U})$. Then, by assigning the boundary color of $\mathcal{T}$ to all arcs of $\mathcal{U}$, we get a nontrivial coloring of $K$. 
	Since the trivial diagram of the unknot is monochromatic, we obtain a contradiction.
\end{proof}

The following corollary is also a remark by Silver in Krebes paper \cite{K} on Fox colorings being able to detect when a 2-string tangle is not unknottable, and it also appears in the work of Kauffman and Lopes in \cite{KL-20} on involutory quandle colorings being able to detect when a 2-string tangle is not unknottable.

\begin{corollary}\label{corollary:unknottablecoloring}
	If a tangle $\mathcal{T}$ is unknottable, then every $c$-coloring of $\mathcal{T}$ by an involutory quandle is trivial.
\end{corollary}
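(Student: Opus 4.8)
The plan is to deduce this directly from Theorem \ref{theorem:monochromatic}, using the orientation-independence of colorings by involutory quandles recorded just before the statement. First I would fix an involutory quandle $(X,\diamond)$ and recall the observation that an oriented tangle admits a nontrivial coloring by $(X,\diamond)$ if and only if it admits one for \emph{every} choice of orientation of its strings. In particular, whether a given $c$-coloring of $\mathcal{T}$ by $(X,\diamond)$ is trivial is insensitive to the orientation assigned to the strings of $\mathcal{T}$.

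Next I would invoke Theorem \ref{theorem:monochromatic}: since $\mathcal{T}$ is unknottable, there is \emph{some} orientation of $\mathcal{T}$ for which $\mathcal{T}$ is monochromatic, that is, for which every $c$-coloring is trivial. Specializing to colorings valued in the involutory quandle $(X,\diamond)$, this produces one orientation under which every $c$-coloring of $\mathcal{T}$ by $(X,\diamond)$ is trivial.

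Combining the two steps completes the argument. By orientation-independence, the triviality of all $(X,\diamond)$-valued $c$-colorings for that single orientation is equivalent to their triviality for any orientation; hence every $c$-coloring of $\mathcal{T}$ by $(X,\diamond)$ is trivial. As $(X,\diamond)$ was an arbitrary involutory quandle, this is exactly the assertion of the corollary.

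The only point requiring care — and really the whole content of the corollary beyond Theorem \ref{theorem:monochromatic} — is making explicit that the involutory hypothesis is precisely what upgrades the \emph{for some orientation} in Theorem \ref{theorem:monochromatic} to \emph{for every orientation}. For a general (oriented) quandle this upgrade fails, as the remarks preceding the statement emphasize, which is exactly why the corollary must be restricted to involutory quandles; so the one genuine obstacle is to phrase this upgrade cleanly rather than to carry out any computation.
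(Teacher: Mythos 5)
Your proposal is correct and uses exactly the ingredients of the paper's proof: Theorem \ref{theorem:monochromatic} combined with the fact that colorings by an involutory quandle are insensitive to the orientation of the strings, so ``monochromatic for some orientation'' forces triviality of involutory $c$-colorings for every orientation. The paper merely phrases the same argument in contrapositive form (a nontrivial involutory $c$-coloring makes $\mathcal{T}$ polychromatic for all orientations, hence not unknottable), which is not a genuinely different route.
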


\begin{proof}
	Suppose that $\mathcal{T}$ has a nontrivial $c$-coloring by an involutory quandle. Then $\mathcal{T}$ is polychromatic for all orientations, hence, by Theorem \ref{theorem:monochromatic}, $\mathcal{T}$ is not unknottable.
\end{proof}

By Corollary \ref{corollary:unknottablecoloring}, if one can find a polychromatic coloring of a tangle by an involutory quandle, then this tangle is not unknottable. For instance, since the $2$-string tangles $6_2$, $6_3$, $7_{13}$, $7_{15}$, $7_{16}$, $7_{17}$ and $7_{18}$ have polychromatic colorings by dihedral quandles (see Figure \ref{figure:coloring} for one such coloring), they are not unknottable.\\

\begin{figure}[ht]
	\centering
	\includegraphics[scale=.12]{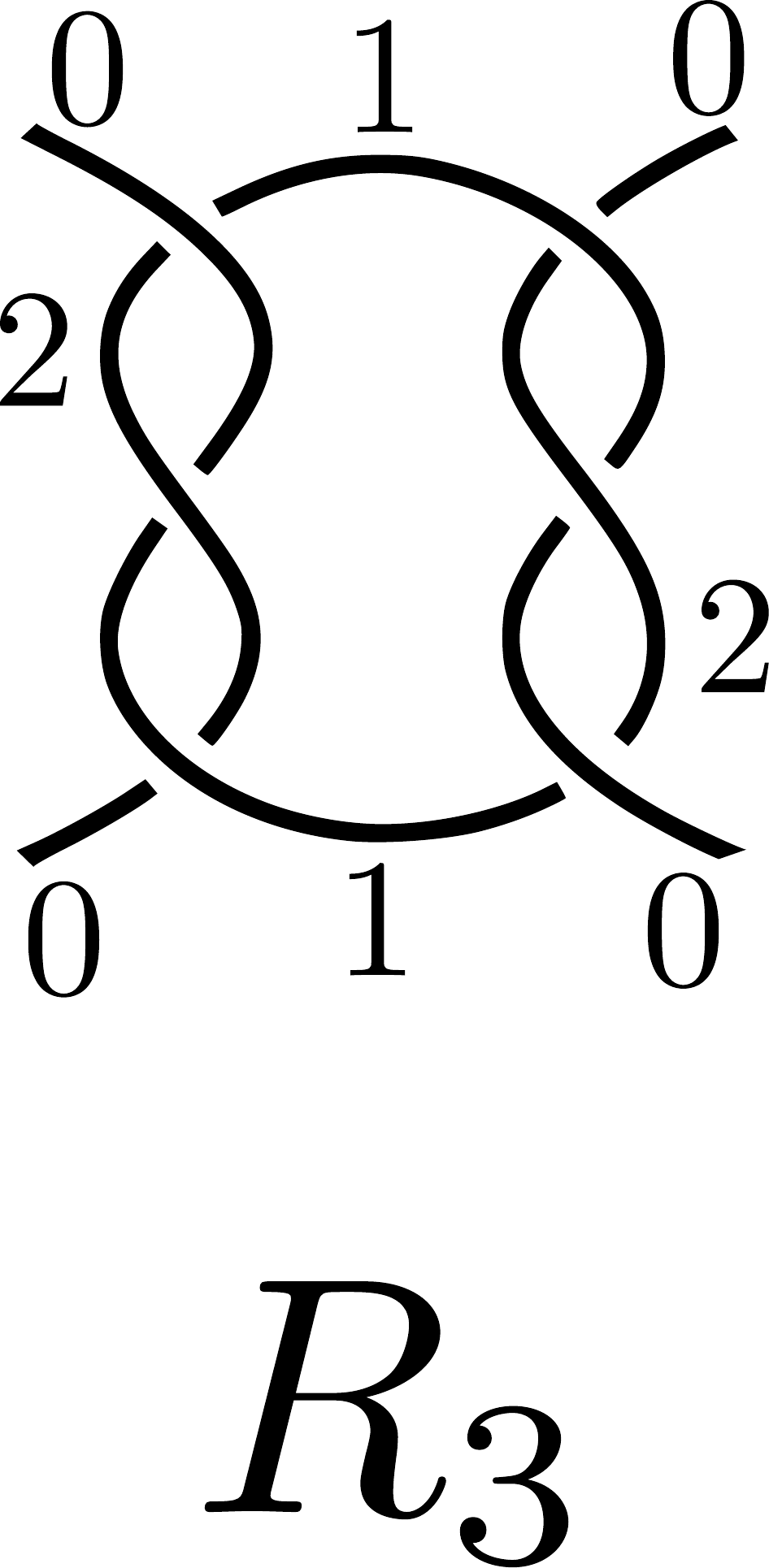}\quad		\includegraphics[scale=.12]{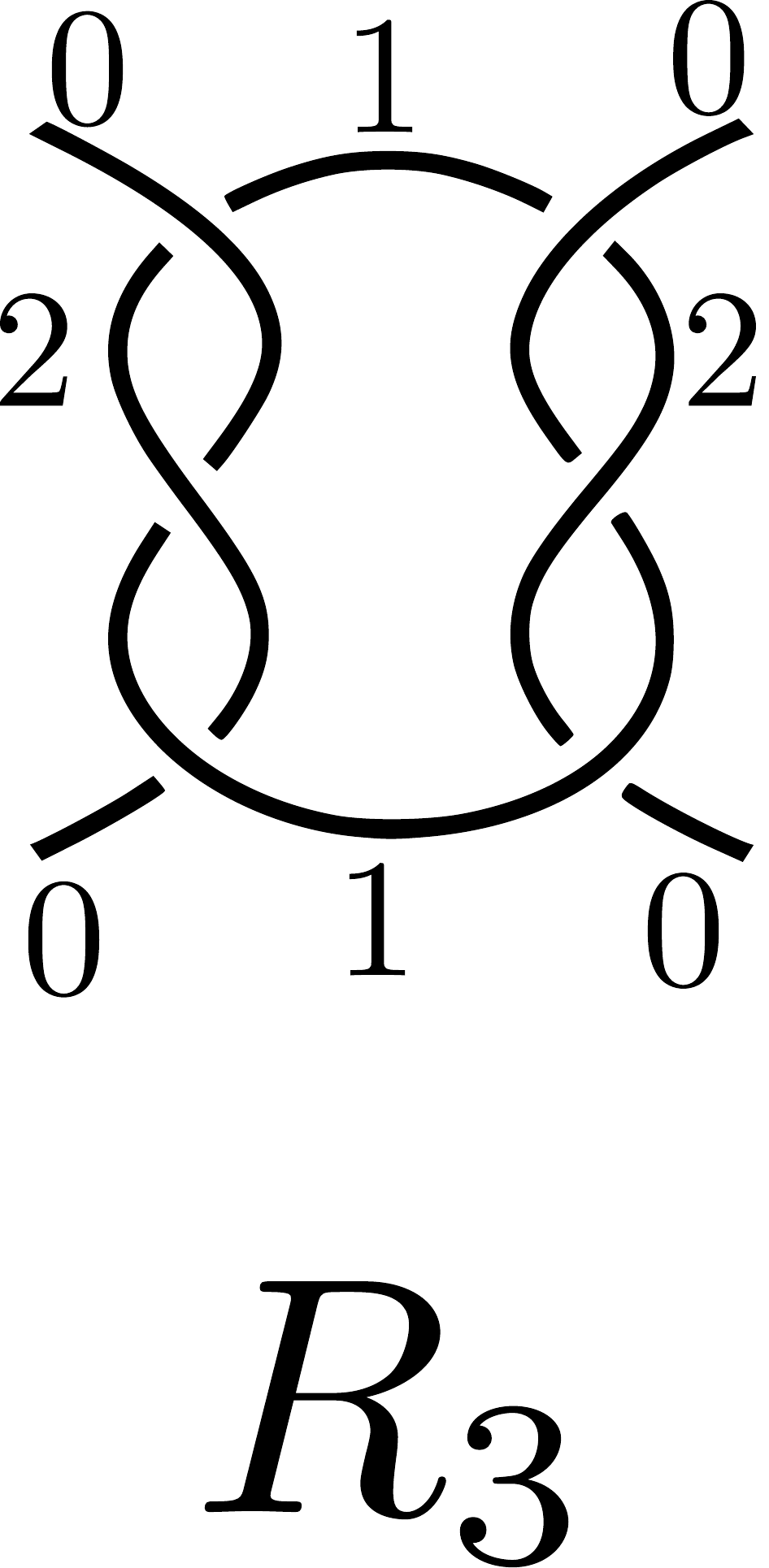}\quad		\includegraphics[scale=.12]{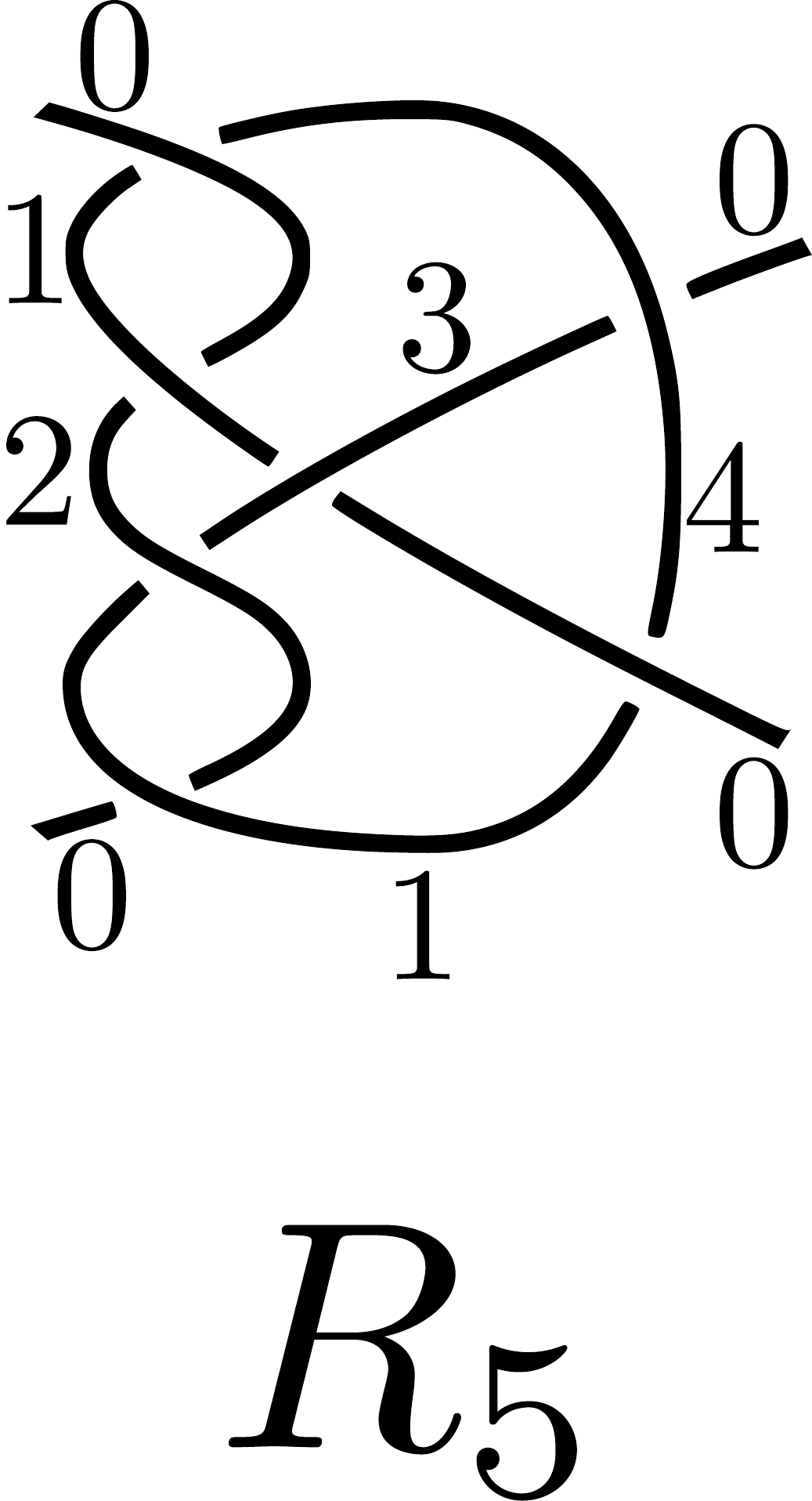}\quad		\includegraphics[scale=.12]{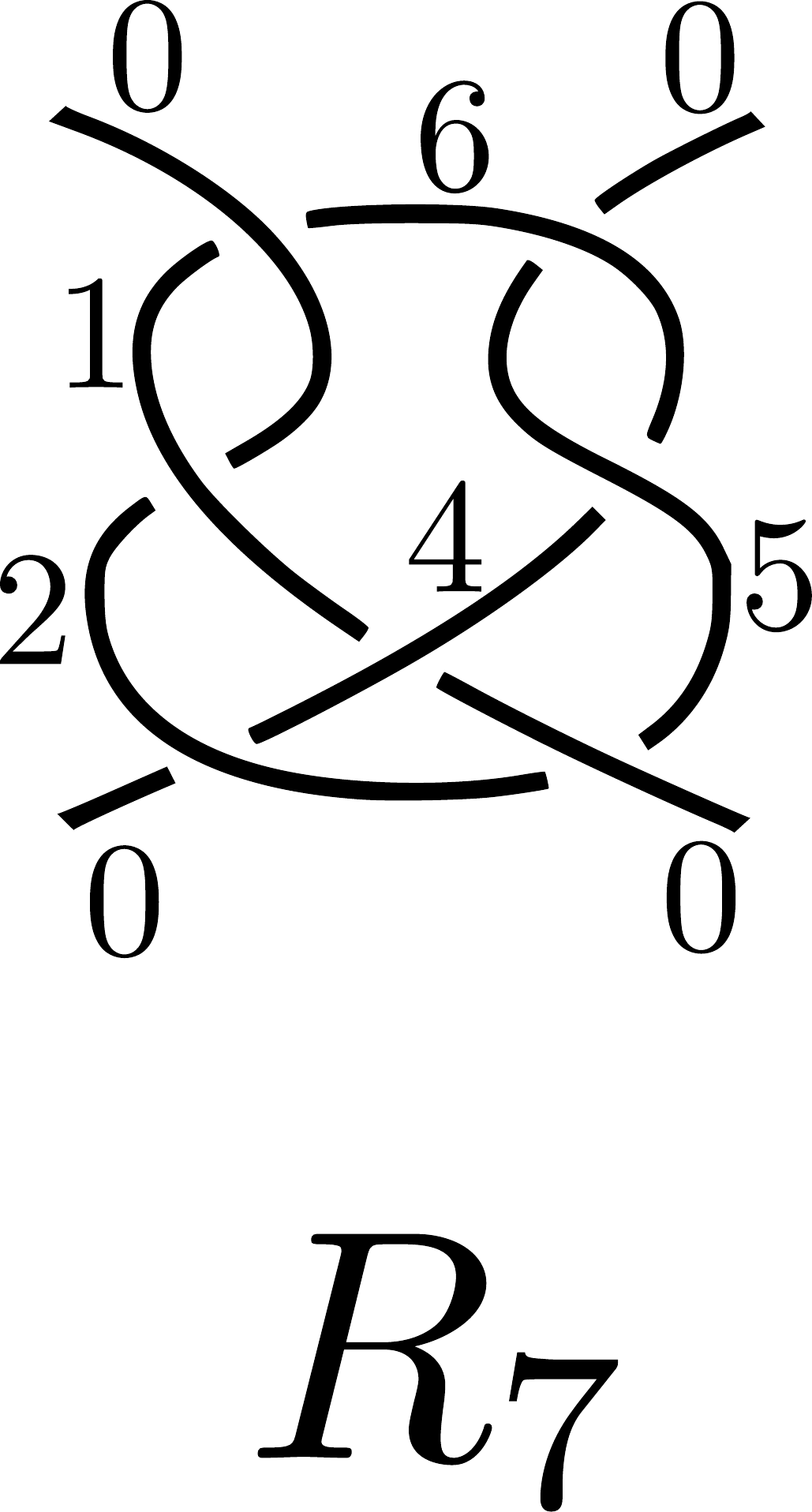}\quad 		\includegraphics[scale=.12]{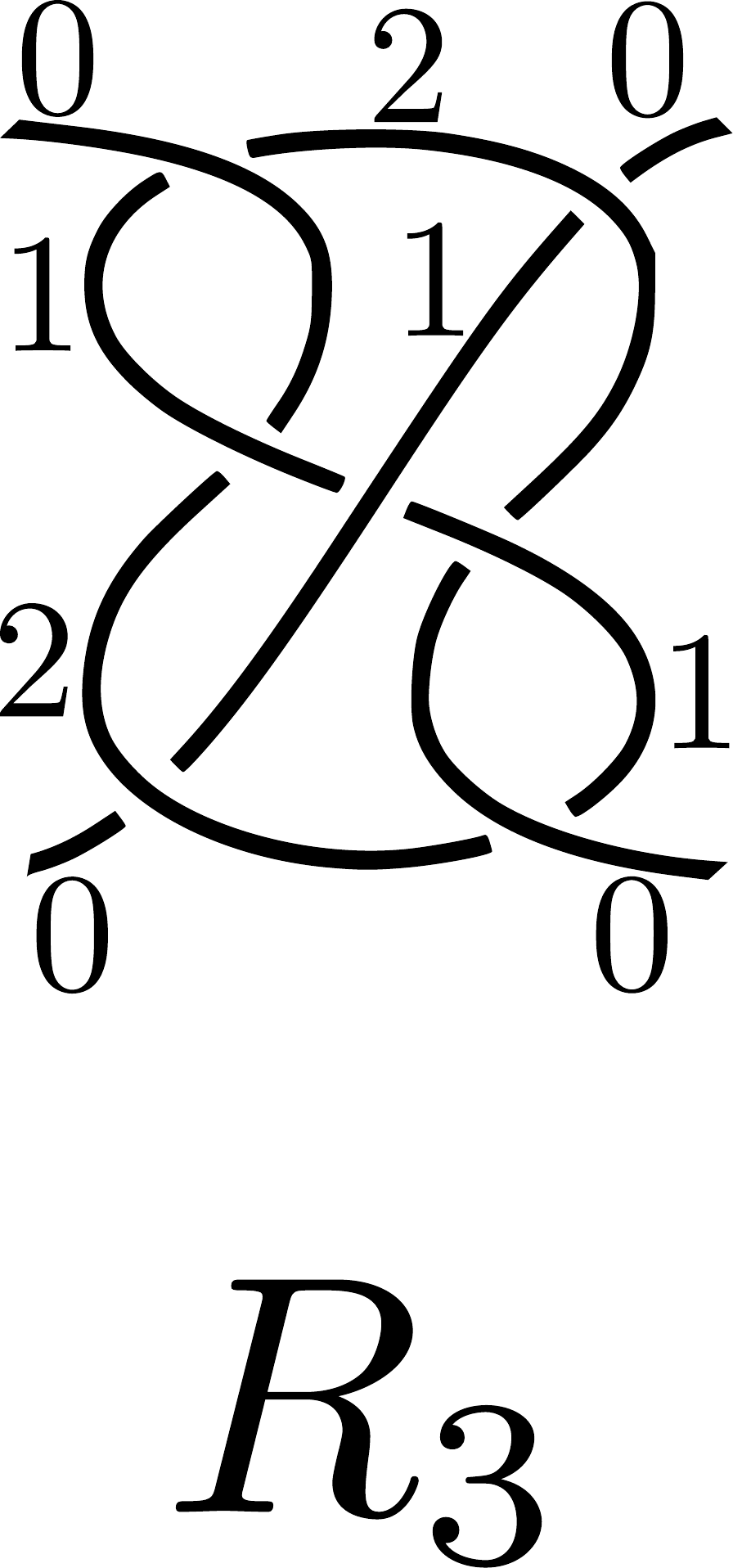}\quad		\includegraphics[scale=.12]{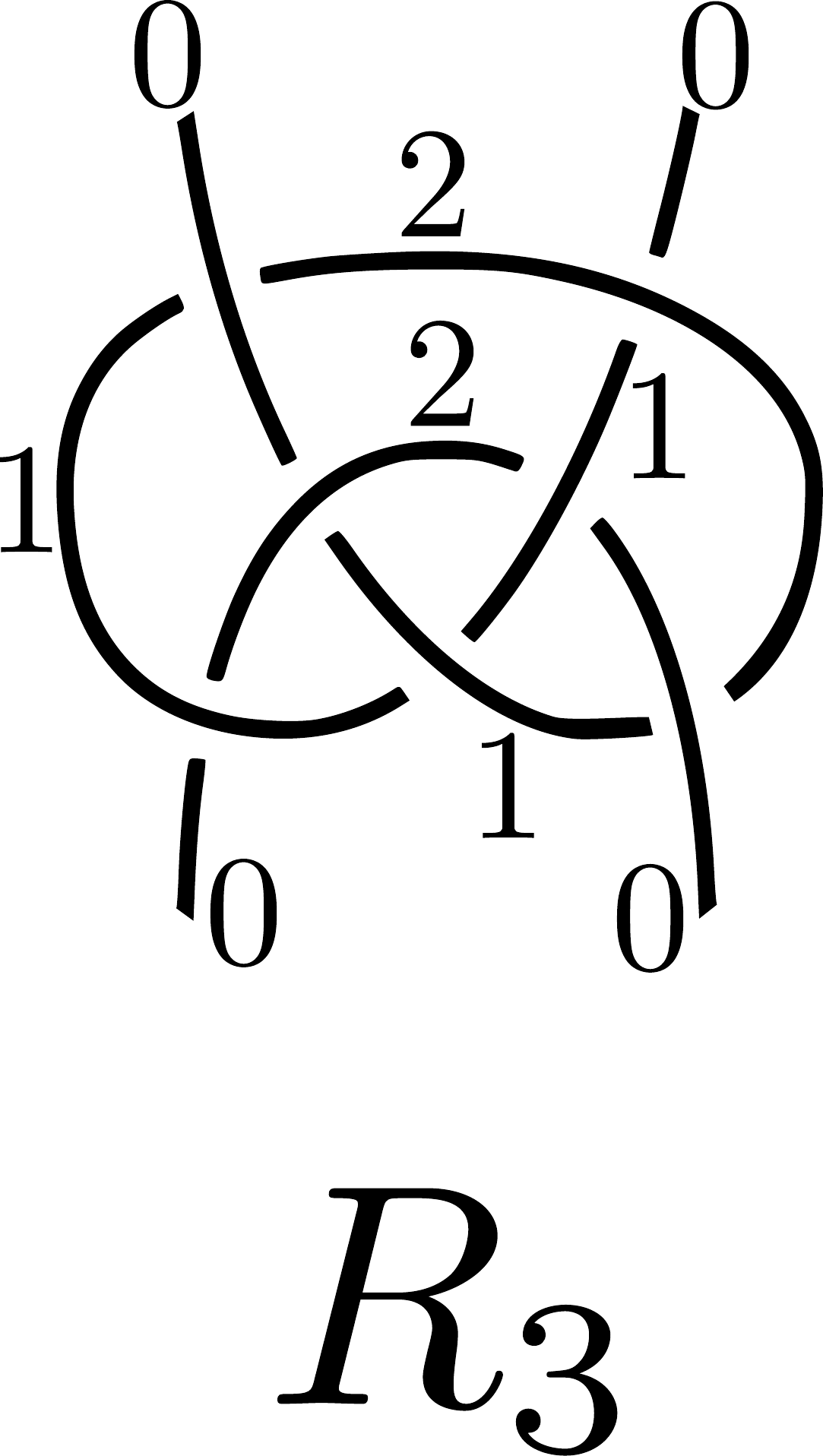}\quad
	\includegraphics[scale=.12]{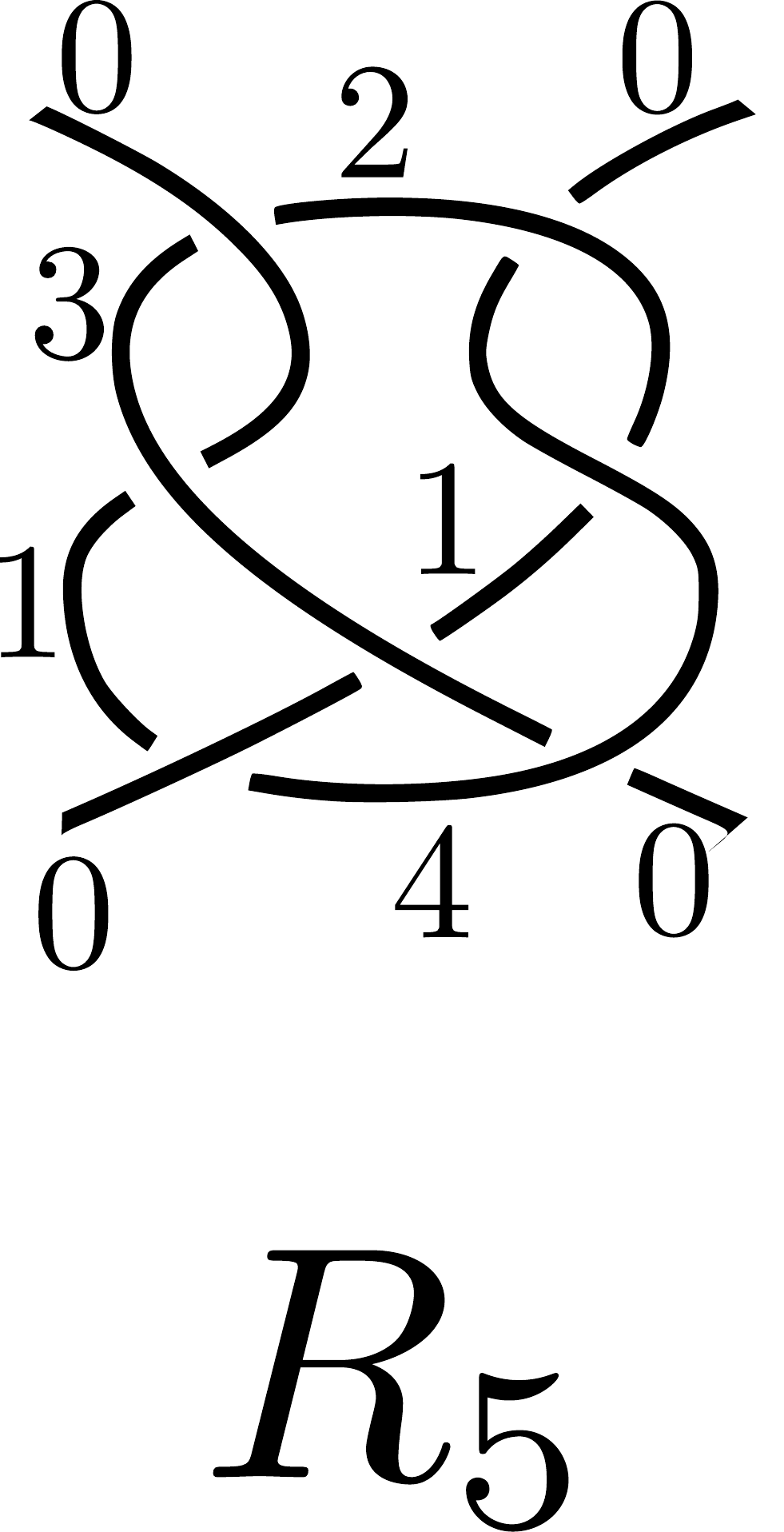} 
	\caption{Polychromatic colorings of  $6_2$, $6_3$, $7_{13}$, $7_{15}$, $7_{16}$, $7_{17}$ and $7_{18}$, respectively.}
	\label{figure:coloring}
\end{figure}





The converse of Theorem \ref{theorem:monochromatic} is not true. For instance, the tangle $6_4$ is not unknottable (see Section \ref{section:tables}), but it is monochromatic. To see this, suppose that there is a coloring of $6_4$ by some quandle, with the four ends having the same color $0$. Let the other arcs have colors $1$, $2$, $3$, $4$ as in Figure \ref{figure:notcolorable}. Since the arc with the color $1$ is in two crossings with a color $0$ overarc on the same side, then the colors $2$ and $3$ coincide. It then follows that the coloring is trivial.

\begin{figure}[ht]
	\centering
	\includegraphics[scale=.2]{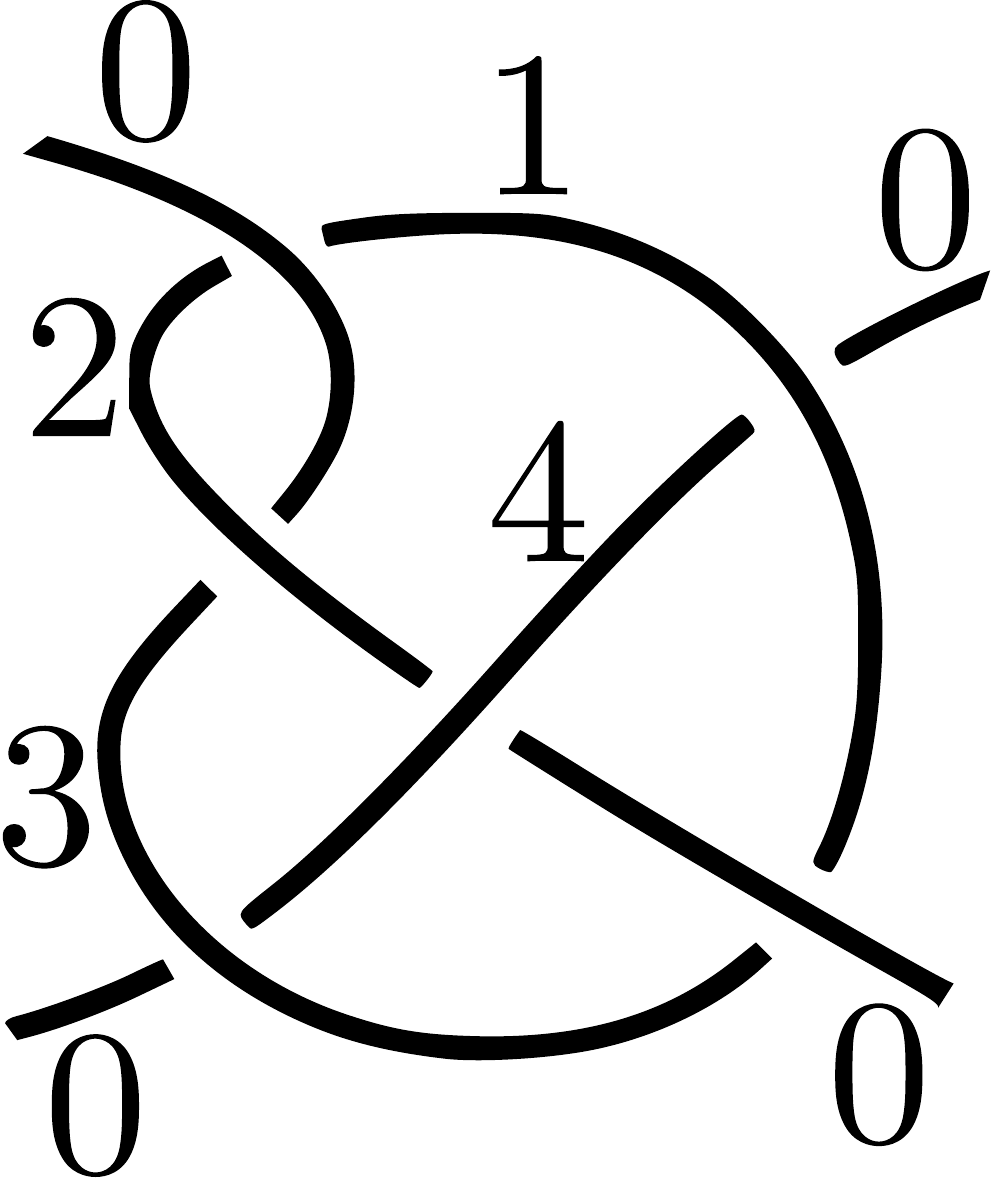}
	\caption{Any $c$-coloring of the tangle $6_4$ is trivial.}
	\label{figure:notcolorable}
\end{figure}

Also, the conclusion of Theorem \ref{theorem:monochromatic} cannot be extended to all orientations. For instance, the tangle $7_7$ is unknottable (see Section \ref{section:tables}),  but it is polychromatic for one of the orientations. 
To see this, consider the quandle ${\mathbb Z}_2[t]/(t^2+t+1)$ whose multiplication table is 

\begin{center}
	$\begin{array}{c|cccc}
		\triangleright&0&1&t&t+1\\
		\hline
		0&0&t+1&1&t\\
		1&t&1&t+1&0\\
		t&t+1&0&t&1\\
		t+1&1&t&0&t+1
	\end{array}$
\end{center}

There is a polychromatic coloring of one orientation of $7_7$ by this quandle (see Figure \ref{figure:77colorable}). The existence of this coloring implies that there is no  unknotting closure tangle of $7_7$ that connects the \textit{NW} and the \textit{SE} ends. 
\begin{figure}[ht]
	\centering
	\includegraphics[scale=.25]{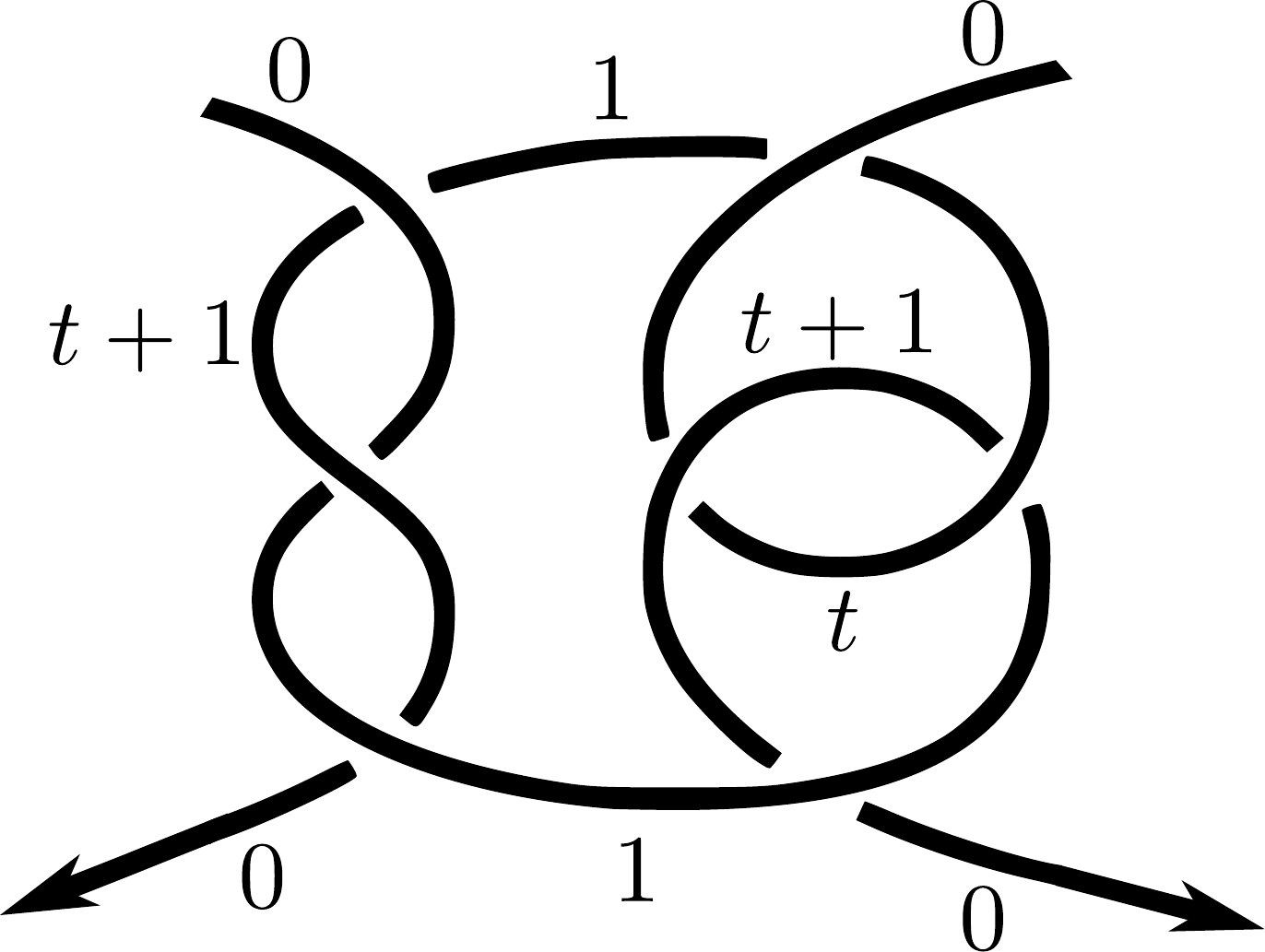}
	\caption{A coloring of the tangle $7_7$.}
	\label{figure:77colorable}
\end{figure}

\begin{theorem}\label{coloring}
	The 2-string tangle $\mathcal T$ has a polychromatic coloring by $R_n$ if and only if $\mathcal T+[\pm 2]$ has one such coloring. A similar result holds for $\mathcal T*[\pm 2]$.
\end{theorem}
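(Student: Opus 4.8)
The plan is to reduce the statement to a short linear-algebra computation over $\mathbb{Z}/n\mathbb{Z}$ that governs how a dihedral coloring propagates across the added twist region. Recall that a coloring by $R_n=(\mathbb{Z}/n\mathbb{Z},\diamond)$ assigns to each arc an element of $\mathbb{Z}/n\mathbb{Z}$ so that at every crossing the two underarcs $x,z$ and the overarc $y$ satisfy $z=2y-x$; because $R_n$ is involutory this relation is the same at positive and negative crossings, and it is reversible, so the colors on one side of a crossing determine those on the other. First I would fix a $c$-coloring and denote by $b$ the common color of the four boundary arcs. In $\mathcal{T}+[\pm2]$ the two arcs along which $\mathcal{T}$ meets the twist region are the $NE$ and $SE$ arcs of $\mathcal{T}$; call their colors $x$ (top) and $y$ (bottom).

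Next I would record the effect of a single half-twist as a $\mathbb{Z}/n\mathbb{Z}$-linear map $M$ on the interface pair $(x,y)$. Two features of $M$ are all the argument needs: (i) $M$ fixes the diagonal vector, i.e. $M(b,b)^{\mathsf T}=(b,b)^{\mathsf T}$, since a crossing between two equally colored strands leaves them colored $b$ (as $2b-b=b$); and (ii) $M$ is invertible, with $\det M=\pm1$, because the crossing relation is reversible. Composing, the twist region $[\pm2]$ realizes $M^{\pm2}$, which inherits both properties. Hence requiring the two arcs on the far (east) side of $[\pm2]$ to carry the boundary color $b$ amounts to $M^{\pm2}(x,y)^{\mathsf T}=(b,b)^{\mathsf T}$, and since $M^{\pm2}$ is invertible and fixes $(b,b)^{\mathsf T}$, the unique solution is $(x,y)=(b,b)$.

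This identity is the heart of the proof: it says every $c$-coloring of $\mathcal{T}+[\pm2]$ must already color the interface arcs $NE,SE$ of $\mathcal{T}$ with the boundary color $b$, so its restriction to $\mathcal{T}$ is a $c$-coloring of $\mathcal{T}$ with the same boundary color; conversely, any $c$-coloring of $\mathcal{T}$ with boundary color $b$ extends over the twist region by coloring every arc of $[\pm2]$ with $b$ (consistent by (i)), yielding a $c$-coloring of $\mathcal{T}+[\pm2]$. I would then check that this correspondence preserves nontriviality in both directions: in either coloring the only arcs that could fail to have color $b$ lie in $\mathcal{T}$, since all arcs of the twist region are forced to be $b$, so a color other than $b$ occurs in $\mathcal{T}+[\pm2]$ exactly when it occurs in $\mathcal{T}$. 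This gives the equivalence ``$\mathcal{T}$ is polychromatic by $R_n$ $\iff\mathcal{T}+[\pm2]$ is.''

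For the product $\mathcal{T}*[\pm2]$ I would either run the identical computation with the south interface arcs $SW,SE$ of $\mathcal{T}$ in place of $NE,SE$, or invoke the $90^\circ$ rotation $\perp$ already used in the proof of Theorem~\ref{theorem:new_component}: since $\perp$ is an equivalence it preserves polychromaticity (so $\mathcal{T}$ is polychromatic iff $\mathcal{T}^{\perp}$ is), and it carries $\mathcal{T}*[\pm2]$ to a sum of $\mathcal{T}^{\perp}$ with a horizontal $[\pm2]$, reducing the product case to the sum case just proved (a possible sign change of the twist is harmless, as both signs are treated). There is no real obstacle here; the only point requiring care is setting up the propagation of colors through the twist region correctly and verifying that the bijection of colorings respects nontriviality. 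Properties (i) and (ii) of $M$, rather than its precise matrix entries (which depend on sign and over/under conventions), are what make the argument go through, and the same reasoning in fact yields the statement for any integer tangle $[k]$ in place of $[\pm2]$.
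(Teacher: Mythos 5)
Your proof is correct and takes essentially the same route as the paper's: one direction extends a nontrivial $c$-coloring of $\mathcal{T}$ trivially over the twist region, and the other shows that any $c$-coloring of $\mathcal{T}+[\pm 2]$ forces every arc of $[\pm 2]$ (in particular the interface arcs of $\mathcal{T}$) to carry the boundary color, so nontriviality transfers to the restriction. The only difference is presentational: you justify that forcing step via the invertible linear propagation map $M$ fixing the diagonal (which incidentally yields the statement for any integer tangle $[k]$), whereas the paper simply asserts that the four arcs of $[\pm 2]$ must share one color.
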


\begin{proof}
	Any polychromatic coloring of $\mathcal T$ can be extended trivially to $\mathcal T+[\pm 2]$. Conversely, consider a $c$-coloring of $\mathcal T+[\pm 2]$. Then the four arcs of $[\pm2]$ must have the same color, hence the four boundary arcs of $\mathcal T$ also have that color. If the coloring of $\mathcal T+[\pm2]$ is nontrivial then the coloring of $\mathcal T$ is also nontrivial.
\end{proof}

This theorem shows, for instance, that $7_{16}$ has a polychromatic $R_3$-coloring, since $7_{16}\approx\mathcal{T}*[-2]$, where $\mathcal{T}\approx 6_2$, and the tangle $6_2$ has a polychromatic $R_3$-coloring.\\

To prove that a tangle is not unknottable we can use polychromatic  $R_n$-colorings of the tangle as this implies that any closure of the tangle would have more than $n$ distinct colorings, being this an obstruction for the closure to be the unknot. However, this is not sufficient to prove that a tangle is not unlinkable, as an unlink with $t$ components has $n^t$ distinct $R_n$-colorings. Hence, we could consider the possibility of proving that $\mathcal{T}$ is not unlinkable by showing that it has no non-trivial $R_n$-coloring for some $n$. The next theorem shows that this is also not a strategy to obstruct unlinkability.


The following theorems establish general results on $R_n$-colorings of tangles.

\begin{theorem}
	Every tangle $\mathcal{T}$ with more than one string has a nontrivial $R_n$-coloring, for every $n$.
\end{theorem}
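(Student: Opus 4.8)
The plan is to translate $R_n$-colorings into a homogeneous linear system over $\mathbb{Z}/n\mathbb{Z}$ and then win by a counting argument. Recall that for the dihedral quandle $R_n=(\mathbb{Z}/n\mathbb{Z},\diamond)$ with $x\diamond y=2y-x$, the coloring condition at a crossing with overarc $b$ and underarcs $a,c$ is exactly the Fox relation $a+c\equiv 2b\,(\mathrm{mod}\ n)$, i.e. the single linear equation $a-2b+c\equiv 0$. Thus an $R_n$-coloring of a diagram of $\mathcal{T}$ is precisely an element of the kernel of the integer ``crossing matrix'' $M$ (one column per arc, one row per crossing) reduced modulo $n$, and such a coloring is trivial exactly when it is a constant vector $(x,\dots,x)$, of which there are exactly $n$. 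So it suffices to prove that this kernel has more than $n$ elements.

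The key step is to compare the number of arcs with the number of relations. I would fix a diagram of $\mathcal{T}$ with $k\geq 2$ strings and $c$ crossings, and note that cutting the $k$ intervals at each undercrossing produces the arcs: since each crossing contributes exactly one undercrossing point and each such cut raises the number of pieces by one, the diagram has exactly $k+c$ arcs, against $c$ relations. Hence $M$ induces a homomorphism $(\mathbb{Z}/n\mathbb{Z})^{k+c}\to(\mathbb{Z}/n\mathbb{Z})^{c}$, and I would bound its kernel straight from the size of its image, rather than through vector-space dimensions, so that composite $n$ needs no special treatment: the image lies in $(\mathbb{Z}/n\mathbb{Z})^{c}$, so $|\operatorname{im}M|\leq n^{c}$, and by the first isomorphism theorem $|\ker M|=n^{k+c}/|\operatorname{im}M|\geq n^{(k+c)-c}=n^{k}\geq n^{2}$.

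Finishing is then immediate: the coloring module has at least $n^{2}$ elements while only $n$ of them are constant, so for $k\geq 2$ and $n\geq 2$ there is a coloring using more than one color, that is, a nontrivial $R_n$-coloring; the same inequality with $\mathbb{Z}$ in place of $\mathbb{Z}/n\mathbb{Z}$ settles $R_0$ as well. The genuinely delicate point — where I would be most careful — is the arc count $k+c$: I must verify that the undercrossing points are interior to the strings (never at the $2k$ boundary endpoints) and pairwise distinct, so that the $c$ cuts really yield $k+c$ arcs. Everything else is pleasantly robust: the image bound, and hence the conclusion $|\ker M|\geq n^{k}>n$, is insensitive to how the $c$ relations are distributed among the crossings or to whether they are independent, so no analysis of the rank of $M$ is required.
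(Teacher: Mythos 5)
Your proof is correct and follows essentially the same route as the paper's: both encode $R_n$-colorings as the solution set of the linear system with one equation per crossing and $k+c$ arc variables (the paper writes $k$ crossings and $s$ strings, hence $k+s$ variables), and both conclude that this solution set is too large to consist only of the $n$ constant colorings. The only difference is that you bound the kernel by counting via the first isomorphism theorem where the paper invokes rank--nullity (rank at most the number of crossings, hence nullity at least the number of strings), which makes your version slightly more robust for composite $n$; note only that your closing remark about $R_0$ should be phrased as a rank argument over $\mathbb{Z}$ rather than a cardinality count, since $\mathbb{Z}^{k+c}$ is infinite.
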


\begin{proof}
	Let $\mathcal{T}$ be a tangle with $k$ crossings and $s$ strings. An $R_n$-coloring of $\mathcal{T}$ is determined by a system of $k$ linear equations (one for each crossing), with $k+s$ variables (one for each arc). The matrix defined by this system has rank at most $k$, thus has nullity at least $s$. Since $s>1$, there exists a nontrivial $R_n$-coloring of $\mathcal{T}$.
\end{proof}





We say that a $R_n$-coloring of a tangle verifies the {\em alternating  sum rule} if, ordering  the  ends  of  the tangle clockwise, the sum of the colors of the odd ends is the same as the sum of the colors of the even ends.  Note that a $R_n$-coloring of a 2-string tangle verifies  the alternating  sum rule if and only if the sum of colors of the \textit{NW} and \textit{SE} boundary arcs equals the sum of colors of the \textit{NE} and \textit{SW} boundary arcs. We say that a tangle $\mathcal{T}$ verifies the \textit{alternating sum rule} if every $R_n$-coloring of $\mathcal{T}$ verifies the alternating sum rule.

\begin{lemma}\label{lemma: alternating sum}
	Any tangle $\mathcal{T}$ verifies the alternating sum rule.
\end{lemma}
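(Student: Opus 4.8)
The plan is to prove the lemma by converting the given arc-coloring (a Fox/dihedral $R_n$-coloring) into a coloring of the \emph{regions} of the diagram, and then reading each boundary color off as a difference of neighbouring region labels, so that the alternating sum collapses by telescoping.

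First I would fix a diagram $\mathcal{D}$ of $\mathcal{T}$ inside the projection disk $B$ and recall that any tangle diagram is checkerboard colorable: the regions of $B\setminus\mathcal{D}$ can be two-colored black and white so that two regions sharing an arc receive opposite colors (the color of a region being the parity of the number of strands met by a generic path from it to $\partial B$). Using the fixed $R_n$-coloring $c$ of the arcs, I would then define a region labeling $g$ valued in $\mathbb{Z}/n\mathbb{Z}$ (or in $\mathbb{Z}$ when $n=0$) by choosing a base region, setting its label to $0$, and declaring that crossing an arc of color $\gamma$ from a black region into a white region raises $g$ by $\gamma$ (and from white into black lowers it by $\gamma$). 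Equivalently, the identity $c(\text{arc})=g(\text{white side})-g(\text{black side})$ should hold for every arc.

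The main obstacle is showing that $g$ is well defined, i.e.\ independent of the chosen path. Since $B$ is simply connected, it suffices to verify that the total change of $g$ around each crossing vanishes. At a crossing the four incident regions alternate in color; the over-strand contributes its single color $y$ on two opposite sides, while the two under-arcs contribute their colors $x$ and $z$. Traversing the four regions and applying the black/white sign rule, the net change is $x-y+z-y=(x+z)-2y$, which is $0$ precisely by the defining relation $z=x\diamond y=2y-x$ of $R_n$. This is the one step where the coloring hypothesis is used, and it is the crux of the argument; checkerboard colorability together with simple-connectivity of $B$ then upgrades this local consistency to a globally well-defined $g$.

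Finally I would run around $\partial B$. The $2m$ ends cut $\partial B$ into boundary arcs lying on regions $R_1,\dots,R_{2m}$ (indexed cyclically), which alternate in checkerboard color, and the arc of $\mathcal{T}$ terminating at the $i$-th end separates $R_{i-1}$ from $R_i$; hence its color equals $\pm\bigl(g(R_i)-g(R_{i-1})\bigr)$, with the sign governed by the colors of $R_{i-1}$ and $R_i$ (equivalently by the parity of $i$). Substituting these expressions, the alternating sum $\sum_i (-1)^{i+1} c(e_i)$ reduces in every case to the cyclic telescoping sum $\sum_{i=1}^{2m}\bigl(g(R_{i-1})-g(R_i)\bigr)=0$, which establishes the alternating sum rule over all $R_n$ simultaneously. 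I expect the only delicate points to be pinning down the signs once and for all via the checkerboard parity, and stating checkerboard colorability carefully for diagrams in a disk; everything else is routine.
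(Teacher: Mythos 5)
Your proof is correct, and it takes a genuinely different route from the paper's. The paper argues by composition: it checks the rule directly on the elementary tangles $[0]$, $[1]$, $[-1]$, $[\infty]$, and then proves a gluing statement --- if $\mathcal{T}_1$ and $\mathcal{T}_2$ verify the rule and $\mathcal{T}=\mathcal{T}_1\cup\mathcal{T}_2$ is obtained by identifying some ends, then (ordering the ends of $\mathcal{T}_2$ so that identified ends have opposite parities) the defect of $\mathcal{T}$ is the sum of the defects of the pieces, hence zero --- so the lemma follows by induction over a decomposition of any diagram into crossings and trivial arcs. You instead pass from the Fox arc coloring to a Dehn-type region labeling: checkerboard-color the regions, define $g$ so that every arc color is the signed difference of the labels of its two adjacent regions, observe that the only obstruction to $g$ being well defined is the holonomy around a crossing, which vanishes exactly because $x+z=2y$ in $R_n$, and then recognize the alternating boundary sum as a cyclic telescoping sum of region-label differences. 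Both arguments cover all $R_n$ (including $R_0=\mathbb{Z}$) and tangles with any number of strings. The paper's proof is shorter and needs no auxiliary structure, though its parity bookkeeping is terse and it quietly uses decomposability of a diagram into elementary pieces; yours is more conceptual --- the alternating sum is exhibited as a total holonomy around $\partial B$, forced to vanish by simple connectivity of the disk --- and it reproves the Fox--Dehn coloring correspondence along the way. One slip to repair when writing it up: your parenthetical definition of the checkerboard coloring (parity of the number of strands met by a path to $\partial B$) is not well defined, since regions of both colors touch $\partial B$; for instance in $[0]$ both the outer region and the region between the two strands reach the boundary with zero crossings, yet they must receive opposite colors. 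Fix instead a base region (or a basepoint on $\partial B$) and use the parity of intersections of a generic path to it; this is well defined because every closed loop in the disk meets the diagram in an even number of points, each string being homologous mod $2$, rel its endpoints, into $\partial B$.
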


\begin{proof}
	The tangles $[0]$, $[1]$, $[-1]$ and $[\infty]$ clearly verify the alternating sum rule.
	
	Let $\mathcal{T}_1$ and $\mathcal{T}_2$ be tangles that verify the  alternating sum  rule. Consider the tangle $\mathcal{T}=\mathcal{T}_1\cup\mathcal{T}_2$ obtained by identifying some ends of $\mathcal{T}_1$ and $\mathcal{T}_2$. By ordering the ends of $\mathcal{T}_2$ in such a way that the common ends of $\mathcal{T}_1$ and $\mathcal{T}_2$ have opposite parities, we conclude that the difference of sum of the colors of the odd and even ends of $\mathcal{T}$ equals the sum of differences coming from $\mathcal{T}_1$ and $\mathcal{T}_2$, which is zero. Therefore, the sum of the colors of the odd ends of $\mathcal{T}$ is the same as the sum of the colors of the even ends.
\end{proof}

On the remainder of this section, we restrict ourselves to $R_0=\left(\mathbb{Z},\diamond\right)$ colorings. 

\begin{definition}
	Let $a,b,c,d$ be the \textit{NW}, \textit{NE}, \textit{SW}, \textit{SE} boundary colors of a $R_0$ $d$-coloring of a tangle $\mathcal{T}$. The {\em coloring fraction} of this coloring is defined by $\dfrac{b-a}{b-d}$.
\end{definition}

In the definition of coloring fraction, note that as the $R_0$ coloring is a $d$-coloring, from Lemma \ref{lemma: alternating sum}, $b-a$ and $b-d$ cannot be simultaneously 0. Hence,  the coloring fraction $\dfrac{b-a}{b-d}$ is well defined in $\mathbb{Q}\cup \{\pm \infty\}$.

The following theorem states that the coloring fraction of a rational tangle is the same as its arithmetical fraction.

\begin{theorem}[Kauffman and Lambropoulou \cite{KL-04}]The coloring fraction of a $R_0$ $d$-coloring of the rational tangle $\tangle{p}{q}$ is $\dfrac{p}{q}$.
\end{theorem}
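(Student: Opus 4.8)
The plan is to prove this by induction on the number of twists needed to build the rational tangle $\tangle{p}{q}$ from the trivial tangles, checking at each stage that the coloring fraction transforms under twisting exactly as the arithmetic fraction does. Two facts will be used throughout. First, $R_0=(\mathbb{Z},\diamond)$ with $x\diamond y=2y-x$ is involutory, so an $R_0$-coloring does not depend on any choice of orientation and a crossing simply imposes $z=2y-x$ on its three local arcs, with $y$ the overarc. Second, by Lemma~\ref{lemma: alternating sum}, any $R_0$-coloring of a $2$-string tangle with \textit{NW}, \textit{NE}, \textit{SW}, \textit{SE} boundary colors $a,b,c,d$ satisfies $a+d=b+c$; this relation is what makes the coloring fraction behave well under rotation.

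For the base of the induction I would take $[0]=\tangle{0}{1}$ and $[\infty]=\tangle{1}{0}$. In $[0]$ the \textit{NW} and \textit{NE} ends lie on a single arc, so $a=b$ and the coloring fraction $\frac{b-a}{b-d}$ equals $0$; in $[\infty]$ the \textit{NE} and \textit{SE} ends lie on a single arc, so $b=d$ and the coloring fraction equals $\infty$. Both agree with the arithmetic fraction.

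For the inductive step I would use that every rational tangle arises from $[0]$ by a sequence of horizontal twists $\mathcal{T}\mapsto\mathcal{T}+[\pm 1]$ and $90^\circ$ rotations $\mathcal{T}\mapsto\mathcal{T}^\perp$, following the continued fraction expansion of $p/q$; it then suffices to track the boundary colors through these two moves. Under a horizontal twist only the \textit{NE} and \textit{SE} colors change, from $b,d$ to $b',d'$, and the single added crossing forces (for one crossing sign) $d'=b$ and $b'=2b-d$. Substituting into $\frac{b'-a}{b'-d'}$ and simplifying shows the coloring fraction increases by exactly $1$, while the mirror crossing decreases it by $1$, matching the rule that $\mathcal{T}+[\pm 1]$ has arithmetic fraction one more or less than that of $\mathcal{T}$. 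Under the rotation the four colors are cyclically relabeled; computing the new coloring fraction and invoking $a+d=b+c$ shows it equals $-1/f$, where $f$ is the old coloring fraction, matching the behaviour of the arithmetic fraction under $\perp$. I would also note that a $d$-coloring of the whole tangle restricts to a $d$-coloring of the smaller one, since the relations $d'=b$, $b'=2b-d$ show that an all-equal boundary on $\mathcal{T}$ forces an all-equal boundary on $\mathcal{T}+[\pm 1]$; hence the induction hypothesis always applies. Because $f\mapsto f+1$ and $f\mapsto -1/f$ realize the full continued fraction algorithm starting from $0$, the output is $p/q$ for every rational tangle.

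The hard part will be the horizontal-twist computation together with the sign bookkeeping: one must pin down which crossing convention corresponds to $+[1]$ and confirm it yields $+1$ rather than $-1$ in the coloring fraction, consistently with the convention defining the arithmetic fraction. Once the twist move is matched to $f\mapsto f+1$ with the correct sign, the rotation identity is essentially a reformulation of the alternating sum rule, and the induction closes.
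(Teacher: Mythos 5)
Your proposal is correct and follows essentially the same route as the paper: an induction over the elementary moves building a rational tangle, with base cases $[0]$ and $[\infty]$, the computation $\frac{(2b-d)-a}{(2b-d)-b}=f+1$ for a horizontal twist, and the alternating sum rule to control the remaining move. The only cosmetic difference is that you close the induction using $90^\circ$ rotations (giving $f\mapsto -1/f$) where the paper instead treats the vertical products $\mathcal{T}*[\pm 1]$ directly by "similar reasoning"; these generate the same family of tangles, and your explicit checks that a $d$-coloring restricts to a $d$-coloring and that the crossing-sign conventions match are details the paper leaves implicit.
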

\begin{proof}
	For $\tangle{p}{q}=[0]$ and $\tangle{p}{q}=[\infty]$, the result is immediate.\\
	Let $\mathcal{T}=\tangle{p}{q}$ be a tangle such that $\dfrac{b-a}{b-d}=\dfrac{p}{q}$. Then the boundary colors of $\mathcal{T}+[1]$ are $a,2b-d,c,b$. Since $\dfrac{2b-d-a}{2b-d-b}=\dfrac{b-a}{b-d}+1=\dfrac{p}{q}+1$, the result holds for the tangle $\mathcal{T}+[1]$.\\
	A similar reasoning (using the previous Lemma) shows that the result also holds for the tangles $\mathcal{T}+[-1]$, $\mathcal{T}*[1]$ and $\mathcal{T}*[-1]$. By successive applications of this property, the result holds for all rational tangles.\end{proof}

\begin{theorem}Consider a $R_0$-coloring of the tangle $\mathcal{T}+\mathcal{U}$ and its restrictions to $\mathcal{T}$ and $\mathcal{U}$. If these coloring are all $d$-colorings and the coloring fractions of  $\mathcal{T}$ and $\mathcal{U}$ are $r_1$ and $r_2$, then the coloring fraction of $\mathcal{T}+\mathcal{U}$ is $r_1+r_2$.
\end{theorem}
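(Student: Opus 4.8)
The plan is to write out all boundary colors explicitly and reduce the claimed identity, after clearing a common denominator, to the gluing relations together with a single application of the alternating sum rule. First I would label the \textit{NW}, \textit{NE}, \textit{SW}, \textit{SE} boundary colors of $\mathcal{T}$ by $a_1,b_1,c_1,d_1$ and those of $\mathcal{U}$ by $a_2,b_2,c_2,d_2$. Since $\mathcal{T}+\mathcal{U}$ is obtained by gluing the east disk of $\mathcal{T}$ to the west disk of $\mathcal{U}$, the \textit{NE} arc of $\mathcal{T}$ is identified with the \textit{NW} arc of $\mathcal{U}$, and the \textit{SE} arc of $\mathcal{T}$ with the \textit{SW} arc of $\mathcal{U}$; for the given coloring to restrict to each summand, these identified arcs must share a color, forcing $b_1=a_2$ and $d_1=c_2$. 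The outer boundary of $\mathcal{T}+\mathcal{U}$ then inherits \textit{NW}, \textit{NE}, \textit{SW}, \textit{SE} colors $a_1,b_2,c_1,d_2$, so by the definition of coloring fraction the statement amounts to
\[
\frac{b_2-a_1}{b_2-d_2}=\frac{b_1-a_1}{b_1-d_1}+\frac{b_2-a_2}{b_2-d_2}.
\]

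The key step is to observe that the two denominators coincide. Applying the alternating sum rule (Lemma \ref{lemma: alternating sum}) to $\mathcal{U}$ gives $a_2+d_2=b_2+c_2$, and substituting the gluing relations $a_2=b_1$ and $c_2=d_1$ turns this into $b_1+d_2=b_2+d_1$, that is, $b_1-d_1=b_2-d_2$. Writing $\delta$ for this common value, the right-hand side becomes $\big((b_1-a_1)+(b_2-a_2)\big)/\delta$, and the remaining gluing relation $a_2=b_1$ collapses the numerator to $(b_1-a_1)+(b_2-b_1)=b_2-a_1$. Thus $r_1+r_2=(b_2-a_1)/\delta$, which is exactly the coloring fraction of $\mathcal{T}+\mathcal{U}$, completing the computation. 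Note that $\mathcal{T}$'s own alternating sum rule is not needed, since the color $c_1$ never enters any of the three fractions.

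The only point needing care — and the step I expect to be the main obstacle — is the degenerate case $\delta=0$, in which all three fractions are infinite and the bare equation above reads as an identity among infinite values. Here I would invoke the hypothesis that each of the three colorings is a $d$-coloring: combined with Lemma \ref{lemma: alternating sum}, this guarantees that the relevant numerators $b_1-a_1$, $b_2-a_2$ and $b_2-a_1$ are all nonzero, so each fraction is a genuine element of $\mathbb{Q}\cup\{\pm\infty\}$ rather than an undefined $0/0$. One then has to check that the stipulated addition on $\mathbb{Q}\cup\{\pm\infty\}$ assigns a consistent value to this triple. Away from $\delta=0$ the identity is purely formal, being the numerator equality $b_2-a_1=(b_1-a_1)+(b_2-a_2)$ divided through by the common denominator $\delta$, and requires no input beyond the two gluing relations and the single use of the alternating sum rule.
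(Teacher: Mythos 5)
Your proof is correct and takes essentially the same route as the paper: the paper also reduces the identity to the alternating sum rule applied to $\mathcal{U}$, which equates the two denominators (it encodes the gluing by reusing the letters $b$ and $d$ for the identified arcs instead of writing explicit relations $b_1=a_2$, $d_1=c_2$), after which the numerators telescope. Your observation about the degenerate case $\delta=0$ flags a point that the paper's computation passes over silently.
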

\begin{proof}
	Let $a,b,c,d$ be the boundary colors of $\mathcal{T}$ and $b,e,d,f$ be the boundary colors of $\mathcal{U}$. Then, since $a+d=b+c$ and $b+f=d+e$, we have
	$$
	\dfrac{e-a}{e-f}-\left(\dfrac{b-a}{b-d}+\dfrac{e-b}{e-f}\right)=0.
	$$\end{proof}

\begin{theorem}
	Let $L=N(\mathcal{T}+\mathcal{U})$ be a link with a nontrivial $R_0$-coloring. If, for $R_0$-colorings, $\mathcal{T}$ and $\mathcal{U}$ are monochromatic, then the coloring fractions of $\mathcal{T}$ and $\mathcal{U}$ are symmetric.
\end{theorem}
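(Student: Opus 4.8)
The plan is to follow the boundary colors through the numerator closure and then read off the two coloring fractions directly. Matching the conventions of the preceding theorem on sums, I would label the \textit{NW}, \textit{NE}, \textit{SW}, \textit{SE} boundary colors of $\mathcal{T}$ by $a,b,c,d$ and those of $\mathcal{U}$ by $b,e,d,f$, so that the two arcs shared by $\mathcal{T}$ and $\mathcal{U}$ in the sum carry the colors $b$ and $d$. The numerator closure joins the \textit{NW} and \textit{NE} ends of $\mathcal{T}+\mathcal{U}$ (colored $a$ and $e$) by a single crossingless arc, and likewise its \textit{SW} and \textit{SE} ends (colored $c$ and $f$); since such an arc carries one color, this forces $a=e$ and $c=f$.

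The first substantive step, and the only place the monochromatic hypothesis enters, is to verify that the restrictions of the coloring to $\mathcal{T}$ and to $\mathcal{U}$ are $d$-colorings, so that their coloring fractions are defined at all. I would argue by contradiction: if the restriction to $\mathcal{T}$ were a $c$-coloring, then monochromaticity makes it trivial, forcing $a=b=c=d$; combined with $a=e$ and $c=f$ this equates all four boundary colors of $\mathcal{U}$, so the restriction to $\mathcal{U}$ is also a $c$-coloring and hence trivial, whence the coloring of $L$ is trivial, contradicting the hypothesis. A symmetric argument rules out the restriction to $\mathcal{U}$ being a $c$-coloring. I expect this bookkeeping, rather than any computation, to be the main obstacle, since the remainder is linear.

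With both restrictions now $d$-colorings, I would apply Lemma \ref{lemma: alternating sum} to $\mathcal{T}$ to get $a+d=b+c$, that is $a-c=b-d$. By definition the coloring fraction of $\mathcal{T}$ is $r_1=\dfrac{b-a}{b-d}$, while that of $\mathcal{U}$, using $e=a$ and $f=c$, is
$$
r_2=\frac{e-b}{e-f}=\frac{a-b}{a-c}=\frac{a-b}{b-d}.
$$
Hence $r_1+r_2=\dfrac{(b-a)+(a-b)}{b-d}=0$, so $r_1=-r_2$ and the coloring fractions are symmetric. In the degenerate case $b=d$ the alternating sum rule also forces $a=c$, so both $r_1$ and $r_2$ equal $\pm\infty$, which is again symmetric. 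Alternatively, one may note that $a=e$ makes the coloring fraction $\dfrac{e-a}{e-f}$ of $\mathcal{T}+\mathcal{U}$ equal to $0$ and then invoke the additivity of coloring fractions under tangle sum to conclude $r_1+r_2=0$; the direct computation has the mild advantage of also covering the degenerate case.
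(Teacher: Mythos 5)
Your proposal is correct and takes essentially the same approach as the paper: monochromaticity forces the induced colorings of $\mathcal{T}$ and $\mathcal{U}$ to be $d$-colorings, the closure arcs give $a=e$ and $c=f$, and the two fractions sum to zero. Your direct computation via Lemma \ref{lemma: alternating sum} is just an inlined version of the paper's appeal to the additivity theorem (whose proof is that same computation), and your degenerate case $b=d$ coincides with the paper's case where the induced coloring of $\mathcal{T}+\mathcal{U}$ is a $c$-coloring, yielding the opposite infinite fractions.
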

\begin{proof}
	If the coloring induced by $L$ on $\mathcal{T}$ and $\mathcal{U}$ was a $c$-coloring, then it would be trivial. Therefore, the induced colorings are $d$-colorings. If the induced coloring of $\mathcal{T}+\mathcal{U}$ is a $d$-coloring, then, by the previous theorem, the coloring fraction of $\mathcal{T}+\mathcal{U}$, which is 0 for the {\em NW} and {\em NE} colors being the same, is the sum of the coloring fractions of $\mathcal{T}$ and $\mathcal{U}$. If the induced coloring of $\mathcal{T}+\mathcal{U}$ is a $c$-coloring, then the coloring fraction of $\mathcal{T}$ is $\pm\infty$ and the coloring fraction of $\mathcal{U}$ is $\mp\infty$.
\end{proof}
\begin{corollary}\label{corollary: unlinking obstruction}
	Let $\mathcal{T}$ be a $R_0$-monochromatic tangle with unlinking (or rational splitting) closure tangle $\tangle{p}{q}$. Then $\mathcal{T}$ has coloring fraction $\dfrac{-p}{q}$.
\end{corollary}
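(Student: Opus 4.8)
The plan is to deduce the statement directly from the preceding theorem, once its two standing hypotheses are verified for the pair $\mathcal{T}$ and $\mathcal{U}=\tangle{p}{q}$. By the description of an unlinking (resp.\ rational splitting) closure tangle recalled before Proposition \ref{proposition:rational}, the link $L=N(\mathcal{T}+\mathcal{U})$ is an unlink (resp.\ a split link). I would first record that $L$ carries a nontrivial $R_0$-coloring: being an unlink or a split link, $L$ is the disjoint union of at least two sublinks, so assigning one constant color to one sublink and a different constant color to the remaining components produces a coloring that is consistent at every crossing (the monochromatic coloring of each piece satisfies $x\diamond x=x$) and uses more than one color.

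Next I would check that $\mathcal{U}=\tangle{p}{q}$ is $R_0$-monochromatic. The quandle $R_0$ is involutory, since $(x\diamond y)\diamond y=2y-(2y-x)=x$. A rational tangle is unknottable (as noted after Theorem \ref{lemma:side_by_side}), so Corollary \ref{corollary:unknottablecoloring} applies: every $c$-coloring of $\mathcal{U}$ by the involutory quandle $R_0$ is trivial, that is, $\mathcal{U}$ is $R_0$-monochromatic. Together with the hypothesis that $\mathcal{T}$ is $R_0$-monochromatic, both hypotheses of the preceding theorem are then in place.

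Applying the preceding theorem to the nontrivial $R_0$-coloring of $L$ yields that the coloring fractions of $\mathcal{T}$ and $\mathcal{U}$ are symmetric. By the theorem of Kauffman and Lambropoulou, every $R_0$ $d$-coloring of the rational tangle $\tangle{p}{q}$ has coloring fraction $\dfrac{p}{q}$, so in particular the coloring of $\mathcal{U}$ induced from $L$ does. Hence the coloring fraction of $\mathcal{T}$ is $\dfrac{-p}{q}$, as claimed.

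I do not expect a genuine obstacle, since the corollary is essentially a repackaging of the preceding theorem together with the Kauffman--Lambropoulou computation. The one point requiring care is confirming the two hypotheses: that the rational closure tangle $\mathcal{U}$ is monochromatic (handled by $R_0$ being involutory and Corollary \ref{corollary:unknottablecoloring}), and that a nontrivial $R_0$-coloring of $L$ exists and forces the restrictions to $\mathcal{T}$ and $\mathcal{U}$ to be $d$-colorings so that their coloring fractions are defined --- but this last forcing is already carried out inside the proof of the preceding theorem and only needs to be invoked.
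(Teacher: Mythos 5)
Your proof is correct and follows exactly the paper's route: the paper's own (very terse) proof consists precisely of noting the two hypotheses of the preceding theorem hold --- the split link $N\bigl(\mathcal{T}+\tangle{p}{q}\bigr)$ has a nontrivial $R_0$-coloring and $\tangle{p}{q}$ is $R_0$-monochromatic --- and then implicitly invoking the Kauffman--Lambropoulou computation of the coloring fraction of a rational tangle. You have simply filled in the justifications (the constant-per-component coloring of a split diagram, and monochromaticity of $\tangle{p}{q}$ via unknottability of rational tangles together with Corollary \ref{corollary:unknottablecoloring} and the involutivity of $R_0$) that the paper leaves unstated.
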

\begin{proof}
	Every split link $N\left(\mathcal{T}+\tangle{p}{q}\right)$ has a nontrivial $R_0$-coloring and $\tangle{p}{q}$ is $R_0$-monochromatic.
\end{proof}

This corollary shows that a tangle with coloring invariant $p/q$ can possibly only have $\tangle{-p}{q}$ as its rational splitting closure tangle. However, even if the resulting link is not split, it has nevertheless zero determinant, as stated in the following theorem.
\begin{theorem}
	Let $\mathcal{T}$ be a tangle with coloring invariant $p/q$. Then $N\left(\mathcal{T}+\tangle{-p}{q}\right)$ is a link with determinant 0.
\end{theorem}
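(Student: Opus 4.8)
The plan is to construct a nontrivial $R_0$-coloring of the link $N\left(\mathcal{T}+\tangle{-p}{q}\right)$ and then appeal to the standard fact that a link has determinant $0$ precisely when it admits a nontrivial integral, i.e. $R_0$, coloring. Concretely, the determinant of a link equals the absolute value of the determinant of the reduced coloring matrix of any of its diagrams (equivalently, the order of $H_1$ of its double branched cover), and this quantity vanishes exactly when the homogeneous coloring system has a solution that is not constant on the arcs. Thus it is enough to exhibit one nontrivial $R_0$-coloring of the closure.

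First I would take the given $R_0$ $d$-coloring of $\mathcal{T}$ realizing the coloring fraction $\dfrac{p}{q}$, with boundary colors $a,b,c,d$ at $NW,NE,SW,SE$, and extend it across the summand $\tangle{-p}{q}$. Because a rational tangle is colored by its boundary data, the colors $b,d$ that $\mathcal{T}$ induces on the shared east disk extend to an $R_0$-coloring of $\tangle{-p}{q}$, whose coloring fraction is $-\dfrac{p}{q}$ by the theorem of Kauffman and Lambropoulou. Provided $\dfrac{p}{q}$ is finite we have $b\neq d$, so this extension and the resulting coloring of $\mathcal{T}+\tangle{-p}{q}$ are both $d$-colorings, and the additivity of the coloring fraction under tangle sum gives $\dfrac{p}{q}+\left(-\dfrac{p}{q}\right)=0$ for the coloring fraction of $\mathcal{T}+\tangle{-p}{q}$.

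Next I would read off the boundary of $\mathcal{T}+\tangle{-p}{q}$. Coloring fraction $0$ forces its $NW$ and $NE$ arcs to share a color, and then the alternating sum rule (Lemma \ref{lemma: alternating sum}) forces its $SW$ and $SE$ arcs to share a color as well. Hence the two closing arcs of the numerator closure can each be assigned a single color, so the coloring descends to an $R_0$-coloring of $N\left(\mathcal{T}+\tangle{-p}{q}\right)$. This coloring is nontrivial, since a $d$-coloring is never constant on the arcs of the tangle (otherwise all four boundary colors would agree and it would be a $c$-coloring), so at least two colors persist after closing up.

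The main obstacle is the bookkeeping needed to guarantee that the pieces really are $d$-colorings, so that the additivity theorem applies; this is exactly the condition $b\neq d$, equivalently $\dfrac{p}{q}\neq\pm\infty$. I would dispatch the degenerate slope $\dfrac{p}{q}=\pm\infty$ (where $\tangle{-p}{q}=[\infty]$ and $a=c$, $b=d$, $a\neq b$) by a direct check: in this case the denominator closure already inherits a nontrivial $R_0$-coloring, and $N\left(\mathcal{T}+[\infty]\right)$ is precisely that closure. Granting the coloring-theoretic input above, the existence of a nontrivial $R_0$-coloring of $N\left(\mathcal{T}+\tangle{-p}{q}\right)$ forces its determinant to be $0$.
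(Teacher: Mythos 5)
Your construction of the nontrivial $R_0$-coloring is essentially the paper's own: the paper's proof consists of a single assertion that there is a $d$-coloring of $\mathcal{T}+\tangle{-p}{q}$ whose two north ends share a color, from which the numerator closure inherits a nontrivial $R_0$-coloring; your use of the Kauffman--Lambropoulou fraction theorem, additivity of coloring fractions, and the alternating sum rule is exactly the derivation that sentence leaves implicit, and you are in fact more careful than the paper about nontriviality and about the slope $\pm\infty$ case. The endings differ, but only superficially: the paper reduces the integral coloring modulo every prime $n$ larger than all the colors to obtain nontrivial $R_n$-colorings, so infinitely many primes divide the determinant and it must vanish, whereas you directly quote the equivalence between determinant $0$ and the existence of a nontrivial $R_0$-coloring. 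Both endings rest on standard facts that the paper does not prove, so this divergence is harmless.

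There is, however, a genuine gap at the crux of your argument, precisely where the hypothesis is used. The claim that ``because a rational tangle is colored by its boundary data, the colors $b,d$ that $\mathcal{T}$ induces on the shared east disk extend to an $R_0$-coloring of $\tangle{-p}{q}$'' is false as a general principle: $R_0$-colorings take values in $\mathbb{Z}$, and two adjacent boundary colors of a rational tangle cannot be prescribed arbitrarily. For instance, the two-crossing vertical twist tangle $\tangle{1}{2}$ has no integral coloring whose west ends are colored $0$ and $1$: its colorings are determined by the two north colors $(a,b)$, the south-west color being $2b-a$ (up to crossing conventions), so one would need $2b=1$. In general, an integral extension across $\tangle{-p}{q}$ with prescribed west colors $b,d$ forces $q\mid(b-d)$, and realizing all admissible boundary data requires an inductive argument over the twist construction, in the spirit of the paper's proof of the Kauffman--Lambropoulou theorem. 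Your proof is saved by an observation you never make: since the given coloring of $\mathcal{T}$ has fraction $\frac{b-a}{b-d}=\frac{p}{q}$ in lowest terms, the identity $q(b-a)=p(b-d)$ yields $q\mid(b-d)$, so the needed extension does exist. Alternatively, you could sidestep integrality altogether: the coloring equations $z=2y-x$ are linear and homogeneous, so one can extend over $\mathbb{Q}$ (where only $b\neq d$ is needed) and clear denominators at the end to get a nontrivial $R_0$-coloring of the closure. As written, though, the decisive step is justified by an incorrect general claim --- a gap which, to be fair, the paper's one-line proof also leaves unaddressed.
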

\begin{proof}
	Since there is a $d$-coloring of $\mathcal{T}+\tangle{-p}{q}$ such that its north ends of  have the same color, then $N\left(\mathcal{T}+\tangle{-p}{q}\right)$ has a nontrivial $R_0$-coloring. Therefore, for every prime number $n$ larger than the colors of $\mathcal{T}+\tangle{-p}{q}$, $N\left(\mathcal{T}+\tangle{-p}{q}\right)$ has a nontrivial $R_n$-coloring. Hence, $n$ divides the determinant of $N\left(\mathcal{T}+\tangle{-p}{q}\right)$, therefore this determinant must be  zero.
\end{proof}

\section{Unknottability of essential 2-string tangles with crossing number at most 7}\label{section:tables}

In this section, we classify unknottable  unlinkable and splittable tangles with at most $7$ crossings, with the notation of table 1 in \cite{KSS}.

\begin{theorem}\label{theorem:classification}An essential $2$-string tangle with crossing number at most 7 is unknottable if and only if it is equivalent to $5_1$, $6_1$, $7_2$, $7_5$, $7_7$ or $7_{14}$.
\end{theorem}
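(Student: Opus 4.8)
The plan is to classify unknottability for all essential 2-string tangles in the table of \cite{KSS} with crossing number at most $7$ by combining the positive and negative tools developed above. On the negative side, the main obstruction is the involutory quandle coloring criterion: by Corollary \ref{corollary:unknottablecoloring}, any tangle admitting a nontrivial $c$-coloring by a dihedral quandle $R_n$ fails to be unknottable. As already recorded after that corollary, this immediately rules out $6_2$, $6_3$, $7_{13}$, $7_{15}$, $7_{16}$, $7_{17}$ and $7_{18}$. For the remaining non-unknottable candidates I would hunt for a polychromatic $R_n$-coloring (trying small primes $n=3,5,7$ first), or, when colorings fail as for $6_4$, use the meridian/longitude-disk corollary together with a determinant or branched-cover computation of $D(\mathcal T)$ or $N(\mathcal T)$ to certify that no rational closure yields the unknot. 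For tangles presented as sums or products of simpler pieces, Theorem \ref{theorem:subtangle} and Corollary \ref{corollary:subtangle} let me propagate non-unknottability downward, exactly as in the worked example $7_{16}\approx\mathcal T*[-2]$ with $\mathcal T\approx 6_2$.

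On the positive side, for each of the six claimed tangles $5_1$, $6_1$, $7_2$, $7_5$, $7_7$ and $7_{14}$ I would exhibit an explicit unknotting closure tangle, which by Proposition \ref{proposition:rational} may be taken rational, and verify the closure is the unknot. The cleanest route is Theorem \ref{theorem:main}: produce an embedded (punctured) disk realizing the two strings as quasi-parallel, or equivalently draw a diagram of the unknot containing the tangle diagram, as was done for the $5_1$ tangle in Figure \ref{figure:51ab}. When a tangle is an algebraic combination of rational tangles, I would instead apply Theorem \ref{theorem:Montesinos} or Corollary \ref{corollary:algebraic} directly: for a two-summand Montesinos tangle $\tangle{p_1}{q_1}+\tangle{p_2}{q_2}$ the congruence $p_1q_2+p_2q_1\equiv\pm1\,(\!\!\!\!\mod q_1q_2)$ settles unknottability arithmetically, and Corollary \ref{corollary:algebraic} handles the three-component algebraic case.

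The main obstacle I anticipate is bookkeeping over the full table rather than any single deep argument: I must confirm that every essential tangle with at most $7$ crossings \emph{other} than the six listed is genuinely not unknottable, which means no candidate slips through the coloring net undetected. The delicate cases are precisely those that are monochromatic for \emph{all} orientations, since for these the dihedral-coloring obstruction is silent; here I expect to fall back on determinant arguments via double branched covers (the homological interpretation of \cite{K,R-00}) or on the structural corollaries of Section \ref{section: algebraic properties} applied to a sum or product decomposition. Conversely, for the positive cases the subtlety is orientation-sensitivity illustrated by $7_7$: although $7_7$ is unknottable, it is polychromatic for one orientation, so I must be careful to use the \emph{existence} of some monochromatic orientation (Theorem \ref{theorem:monochromatic}) only as a necessary condition and to certify unknottability by an actual closure rather than by the coloring test alone.
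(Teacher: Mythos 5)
Your proposal follows essentially the same route as the paper: exhibit explicit rational unknotting closure tangles for $5_1$, $6_1$, $7_2$, $7_5$, $7_7$, $7_{14}$ and check the closures are unknots; rule out $6_2$, $6_3$, $7_{13}$, $7_{15}$, $7_{16}$, $7_{17}$, $7_{18}$ by nontrivial dihedral-quandle colorings via Corollary \ref{corollary:unknottablecoloring}; and settle the remaining tangles arithmetically. The one substantive caveat concerns your plan for the coloring-silent cases, in particular $6_4$: the meridian/longitude-disk corollary requires \emph{both} tangles separated by the disk to be essential, whereas the natural product decomposition $6_4\approx\left(\tangle{1}{3}+\tangle{-1}{2}\right)*\tangle{-2}{1}$ has the inessential rational factor $[-2]$, so that corollary's hypothesis fails (and no decomposition of a $\leq 7$-crossing tangle into two essential pieces is available). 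The paper instead treats all ten remaining tangles ($6_4$, $7_1$, $7_3$, $7_4$, $7_6$, $7_8$--$7_{12}$) uniformly, writing each as an algebraic expression in rational tangles and applying Theorem \ref{theorem:Montesinos} together with Corollary \ref{corollary:algebraic}; e.g.\ for $6_4$, the Montesinos summand $\tangle{1}{3}+\tangle{-1}{2}$ is unknottable only with closure tangle $[0]\neq[-2]$, so $6_4$ is not unknottable. You do list these structural corollaries, but only as a fallback; they should be your primary tool for all ten tangles, with the determinant/branched-cover computations you mention being unnecessary.
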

\begin{proof}
	It can be easily checked (see Figure \ref{figure:unknotting}) that 	$N(5_1+[-1])$, $N(6_1+[-1])$, $N(7_2+[-1])$, $N(7_5+[0])$, $N(7_7+[0])$ and $N(7_{14}+[-1])$ are unknotted.
	
	\begin{figure}[ht]
		\centering
		\includegraphics[scale=.08]{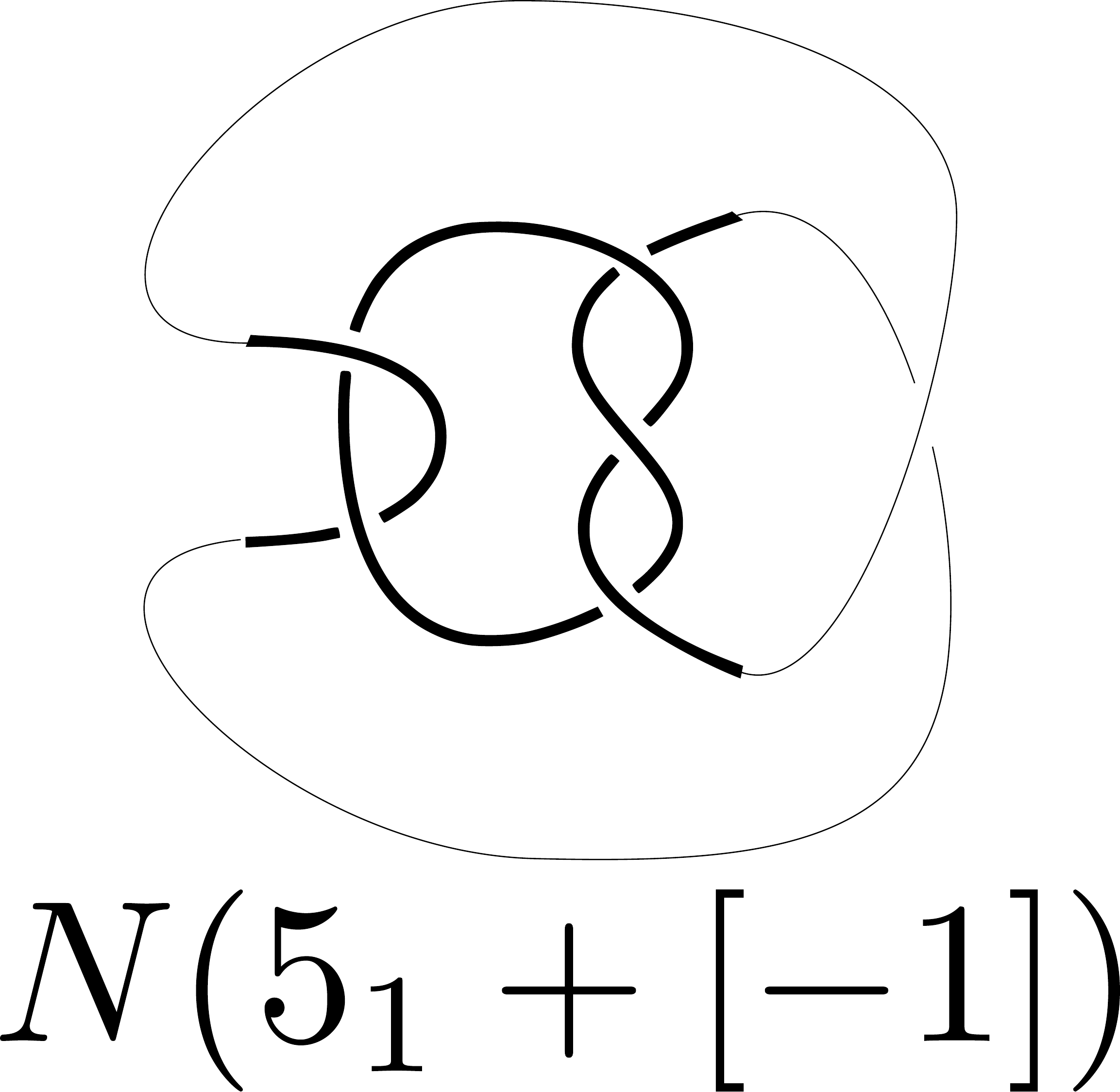}\,	\includegraphics[scale=.08]{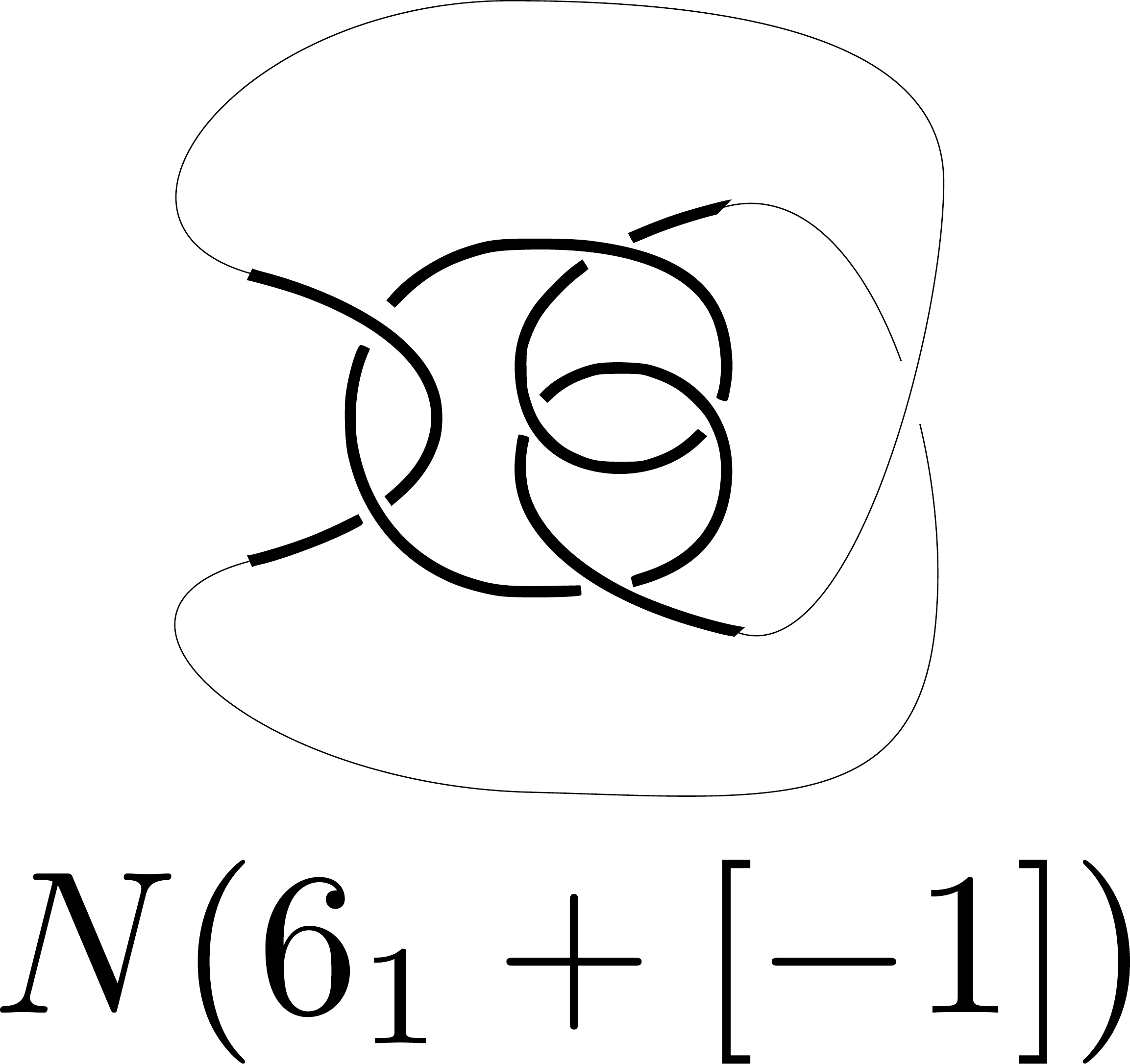}\, 	\includegraphics[scale=.08]{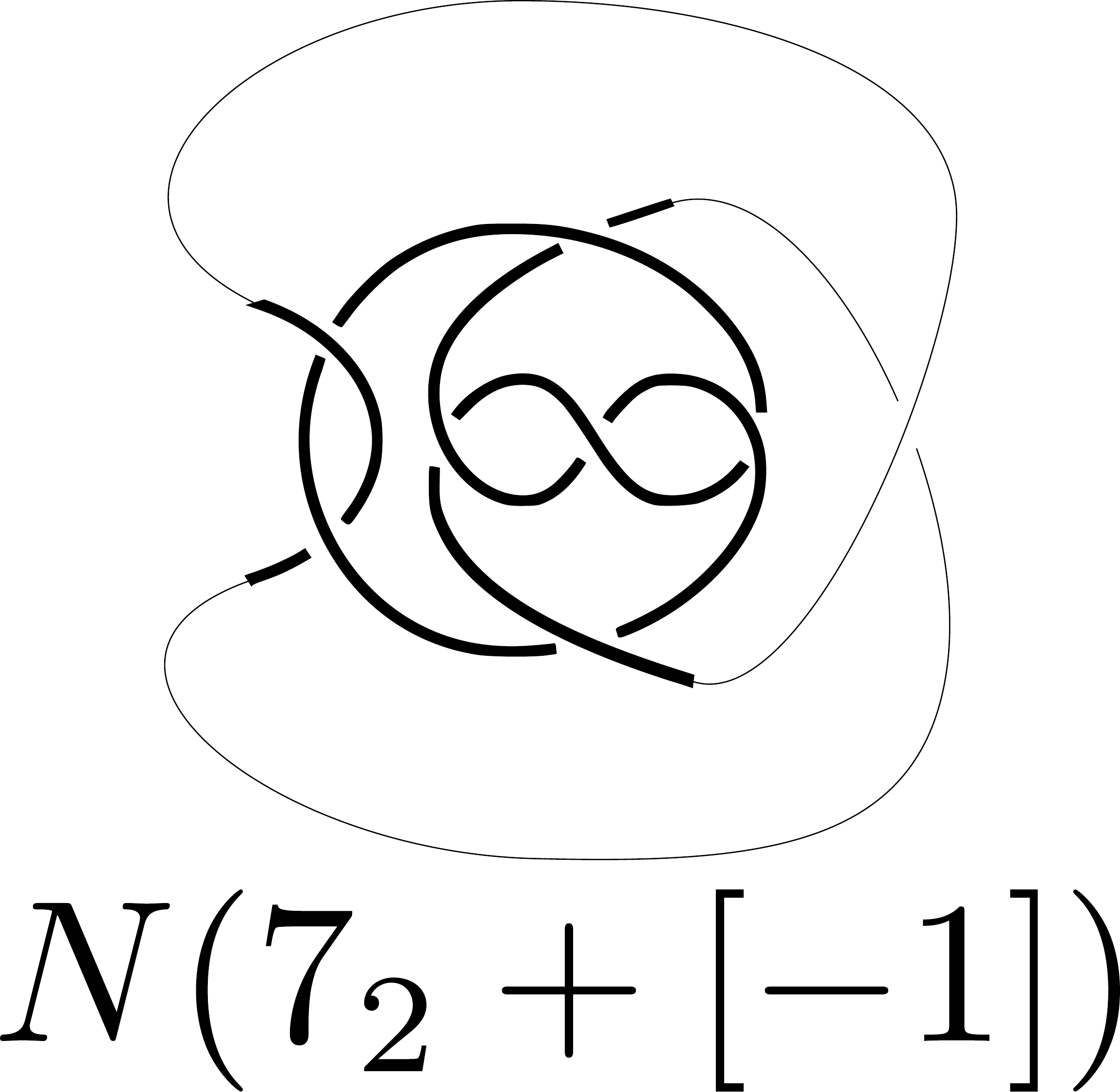}\,	\includegraphics[scale=.08]{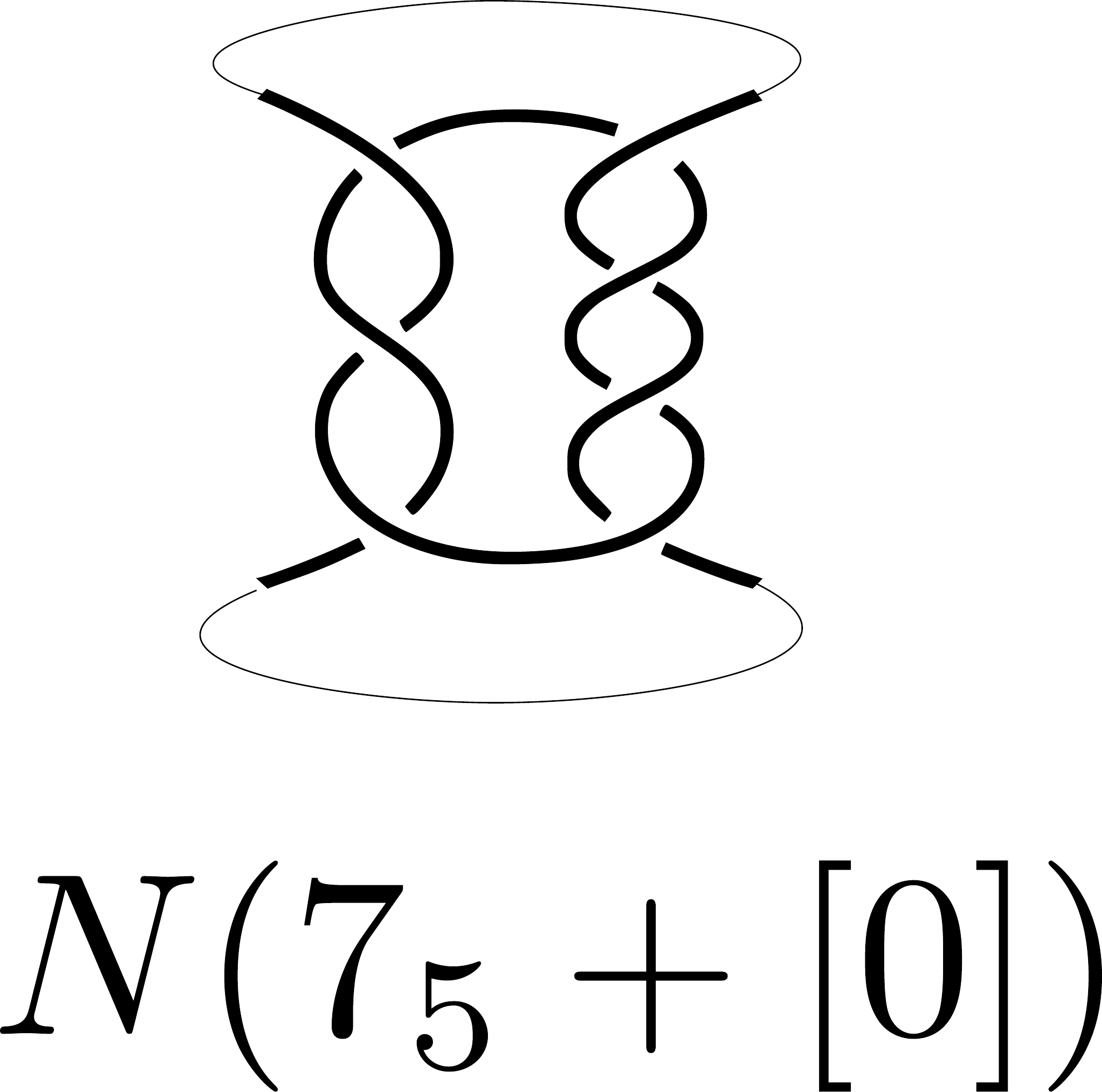}\, 	\includegraphics[scale=.08]{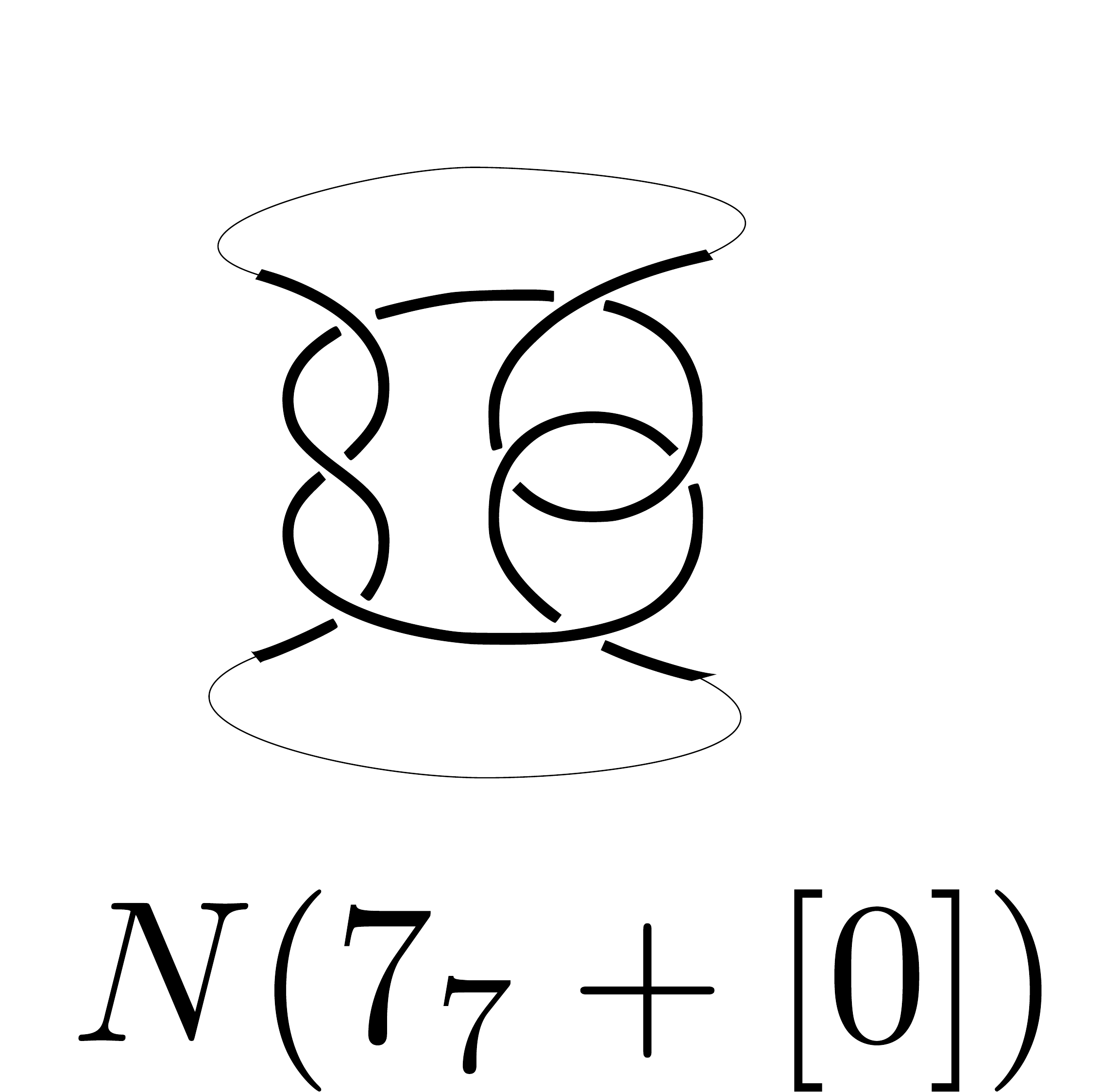}\, 	\includegraphics[scale=.08]{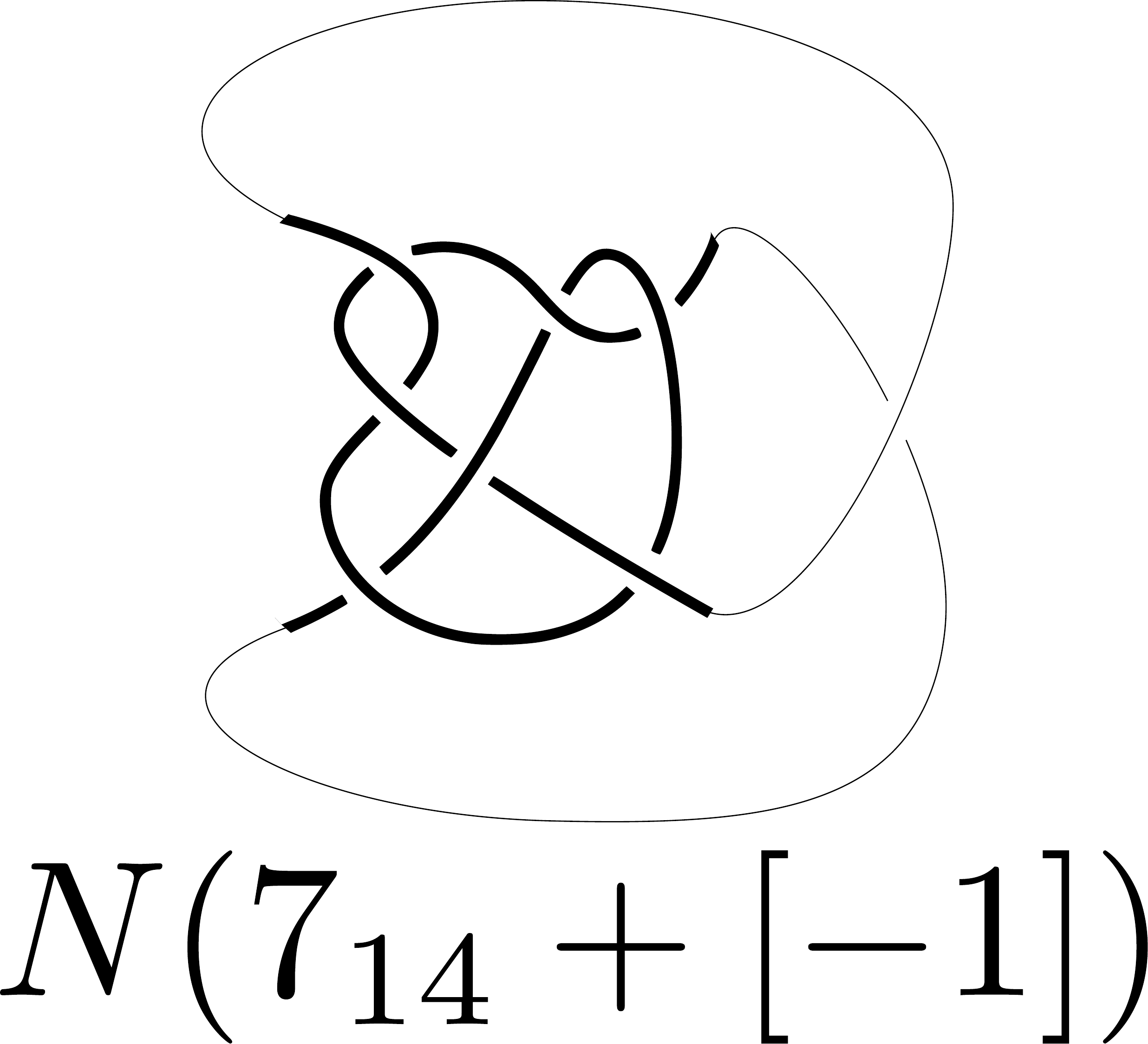}  
		\caption{The unknotting closure tangles of $5_1$, $6_1$, $7_2$, $7_5$, $7_7$ and $7_{14}$.}
		\label{figure:unknotting}
	\end{figure}

	 Conversely, the $2$-string tangles $6_2$, $6_3$, $7_{13}$, $7_{15}$, $7_{16}$, $7_{17}$ and $7_{18}$ have nontrivial colorings by dihedral quandles (see Section \ref{section:colorings}), therefore they are not unknottable. The remaining $2$-string tangles with crossing number at most $7$ are algebraic, with the expression given by Table \ref{table:algebraic}, and a direct application of Theorems \ref{theorem:Montesinos} and \ref{corollary:algebraic} shows that these tangles are not unknottable.
	
	\begin{table}[ht]
		\begin{center}
			\renewcommand{\arraystretch}{2}
			\begin{tabular}{c|cp{5mm}c|c}
				Tangle & Algebraic expression&&Tangle & Algebraic expression\\
				\cline{1-2}  \cline{4-5}
				$6_4$ &$\left(\tangle{1}{3}+\tangle{-1}{2}\right)*\tangle{-2}{1}$&&$7_8$&$\left(\tangle{-1}{2}+\tangle{-1}{3}\right)*\tangle{-2}{1}$ \\
				$7_1$ & $\tangle12+\tangle15$ &&$7_9$ & $\left(\tangle{-1}{2}+\tangle{-2}{3}\right)*\tangle{-2}{1}$ \\
				$7_3$ &$\tangle12+\tangle27$&&$7_{10}$&$\left(\tangle{-1}{2}+\tangle{1}{3}\right)*\tangle{3}{1}$ \\
				$7_4$ &$\tangle13+\tangle14$&&$7_{11}$&$\left(\tangle{-2}{3}+\tangle{1}{2}\right)*\tangle{-3}{1}$ \\
				$7_6$ &$\tangle13+\tangle25$&&$7_{12}$&$\left(\tangle{2}{3}+\tangle{-1}{2}\right)*\tangle{-2}{1}$ 
			\end{tabular}
		\end{center}
		\caption{Algebraic expressions of $6_4$, $7_1$, $7_3$, $7_4$, $7_6$, $7_8$, $7_9$, $7_{10}$, $7_{11}$ and $7_{12}$.}
		\label{table:algebraic}	
	\end{table}
\end{proof}

\begin{theorem}An essential $2$-string tangle with crossing number at most 7 is unlinkable or splittable if and only if it is equivalent to $6_3$.
\end{theorem}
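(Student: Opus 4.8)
The plan is to sort the essential $2$-string tangles of crossing number at most $7$ according to the trichotomy already used in the proof of Theorem~\ref{theorem:classification}: the unknottable tangles $5_1,6_1,7_2,7_5,7_7,7_{14}$; the algebraic tangles $6_4,7_1,7_3,7_4,7_6,7_8,\dots,7_{12}$ listed in Table~\ref{table:algebraic}; and the tangles $6_2,6_3,7_{13},7_{15},7_{16},7_{17},7_{18}$ carrying nontrivial dihedral colorings. Throughout I will use that an unlink is a split link, so that unlinkable implies splittable and hence non-splittable implies non-unlinkable; thus for each tangle other than $6_3$ it suffices to rule out splittability, while for $6_3$ it suffices to exhibit an unlinking closure. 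For the first group I would invoke Scharlemann's Theorem~\ref{theorem:split_unknot}: an essential tangle that is simultaneously unknottable and splittable is rational. Since each of these six tangles is essential, hence non-rational, and is already known to be unknottable, none can be splittable, and therefore none is unlinkable.

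For the algebraic group I would apply Theorem~\ref{theorem:Montesinos} and Corollary~\ref{corollary:algebraic} case by case. The tangles $7_1,7_3,7_4,7_6$ are sums $\tangle{p_1}{q_1}+\tangle{p_2}{q_2}$, and Theorem~\ref{theorem:Montesinos}(b) makes unlinkability equivalent to $q_1=q_2$ together with $p_1+p_2\equiv 0\pmod{q_1}$; in each of these four tangles the two denominators differ, so none is unlinkable, and by Theorem~\ref{theorem:Montesinos}(c) none is splittable. The tangles $6_4,7_8,\dots,7_{12}$ are products $\left(\tangle{p_1}{q_1}+\tangle{p_2}{q_2}\right)*[p_3]$, and Corollary~\ref{corollary:algebraic} reduces their splittability to that of the inner sum with integral closure $[p_3]$; again the inner denominators $q_1,q_2$ differ in every case, so the inner sum is neither unlinkable nor splittable, and hence neither is the product.

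The colored group is the heart of the argument. Here I would first show that $6_3$ is unlinkable by exhibiting an explicit rational tangle $\mathcal U$ for which $N(6_3+\mathcal U)$ is the two-component unlink; this is consistent with the earlier observation that attaching a parallel strand to a string of $6_3$ produces a tangle embedding in the three-component unlink. For the remaining six tangles I would prove non-splittability, which by the remark above also gives non-unlinkability. Several of them reduce immediately to $6_2$ via Corollary~\ref{corollary:subtangle}(b): for instance $7_{16}\approx 6_2*[-2]$, and in a product decomposition of an unlinkable tangle both factors must be unlinkable or unknottable, which $6_2$ is not; since these tangles have unknotted strings, a rational splitting closure would split them into two unknotted components, that is, an unlink, so non-unlinkability already forces non-splittability. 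The tangles not accessible in this way, together with $6_2$ itself, I would treat directly with the coloring-fraction machinery of Section~\ref{section:colorings}: compute the $R_0$ coloring fraction $p/q$, so that by Corollary~\ref{corollary: unlinking obstruction} and the uniqueness of the rational splitting closure (Theorem~\ref{theorem:unique_split}) the only possible splitting closure is $\tangle{-p}{q}$, and then show that this single candidate does not yield a split link.

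The main obstacle will be this last step, in particular for $6_2$: verifying that the candidate closure $N\!\left(6_2+\tangle{-p}{q}\right)$ is genuinely non-split. The final theorem of Section~\ref{section:colorings} shows that this candidate necessarily has determinant zero, so the determinant yields no information here; I therefore expect to need the sharper obstruction from the double branched cover, namely that the candidate closure is split precisely when the double cover of $S^3$ branched over it is reducible, and to rule this out by identifying that cover and checking that it is irreducible and not $S^2\times S^1$. This mirrors the situation of the Krebes example flagged in the introduction, whose non-unlinkability lies beyond the reach of the determinant, so I anticipate it to be the delicate point of the proof.
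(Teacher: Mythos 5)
Your overall architecture --- Scharlemann's Theorem~\ref{theorem:split_unknot} for the six unknottable tangles, Theorem~\ref{theorem:Montesinos} and Corollary~\ref{corollary:algebraic} for the algebraic tangles of Table~\ref{table:algebraic}, an explicit unlinking closure for $6_3$, and the coloring-fraction machinery of Corollary~\ref{corollary: unlinking obstruction} to pin down a unique candidate splitting closure for the remaining tangles --- coincides with the paper's proof, and your checks on the algebraic group (distinct denominators $q_1\neq q_2$ in every sum, hence not unlinkable by Theorem~\ref{theorem:Montesinos}(b) and not splittable by (c)) are exactly what the paper does. Your detour for $6_2$ and $7_{16}$ is unnecessary but sound: the paper simply records $6_2\approx\tangle13+\tangle13$ and $7_{16}\approx\left(\tangle13+\tangle13\right)*\tangle{-2}{1}$ (its Table~\ref{table:algebraicII}) and disposes of them by the same Montesinos computation as the others; your route through Corollary~\ref{corollary:subtangle}(b) plus the unknotted-strings remark reaches the same conclusion, and your proposed double-branched-cover check of $N\left(6_2+\tangle{-2}{3}\right)$ is, in effect, a re-derivation of Theorem~\ref{theorem:Montesinos} in that special case.

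The genuine gap is the step you yourself flag as delicate, and it is precisely the step your proposal does not actually carry out: proving that the unique candidate closures of $7_{13}$, $7_{15}$, $7_{17}$, $7_{18}$ are not split. Your plan --- ``identify the double branched cover and check that it is irreducible'' --- is workable for Montesinos links, but these four tangles are exactly the non-algebraic ones, so their candidate closures $N\left(7_{13}+\tangle{-3}{4}\right)$, $N\left(7_{15}+\tangle{-2}{3}\right)$, $N\left(7_{17}+\tangle{-8}{7}\right)$, $N\left(7_{18}+[-2]\right)$ have no ready Seifert-fibered description, and identifying their double branched covers and certifying irreducibility is a substantial project for which you offer no method. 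The paper sidesteps this entirely with computable invariants: for $7_{17}$ and $7_{18}$ the candidate closures have nonzero linking number, and for $7_{13}$ and $7_{15}$ their Jones polynomials differ from those of the only split links they could be (trefoil plus unknot, respectively the unlink, since the string knot types are known). Note also a small but real inaccuracy in your formulation: a link is \emph{not} split ``precisely when'' its double branched cover is reducible --- splitness implies an $S^2\times S^1$ summand, but reducibility can also come from a connected sum decomposition of a non-split link; only the direction ``irreducible cover implies non-split,'' which is the one you need, is correct as stated.
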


\begin{proof}It can be easily checked (see Figure \ref{figure:unlinking}) that 	 $N(6_3+[0])$ is the unlink.
	\begin{figure}[ht]
		\centering
		\includegraphics[scale=.08]{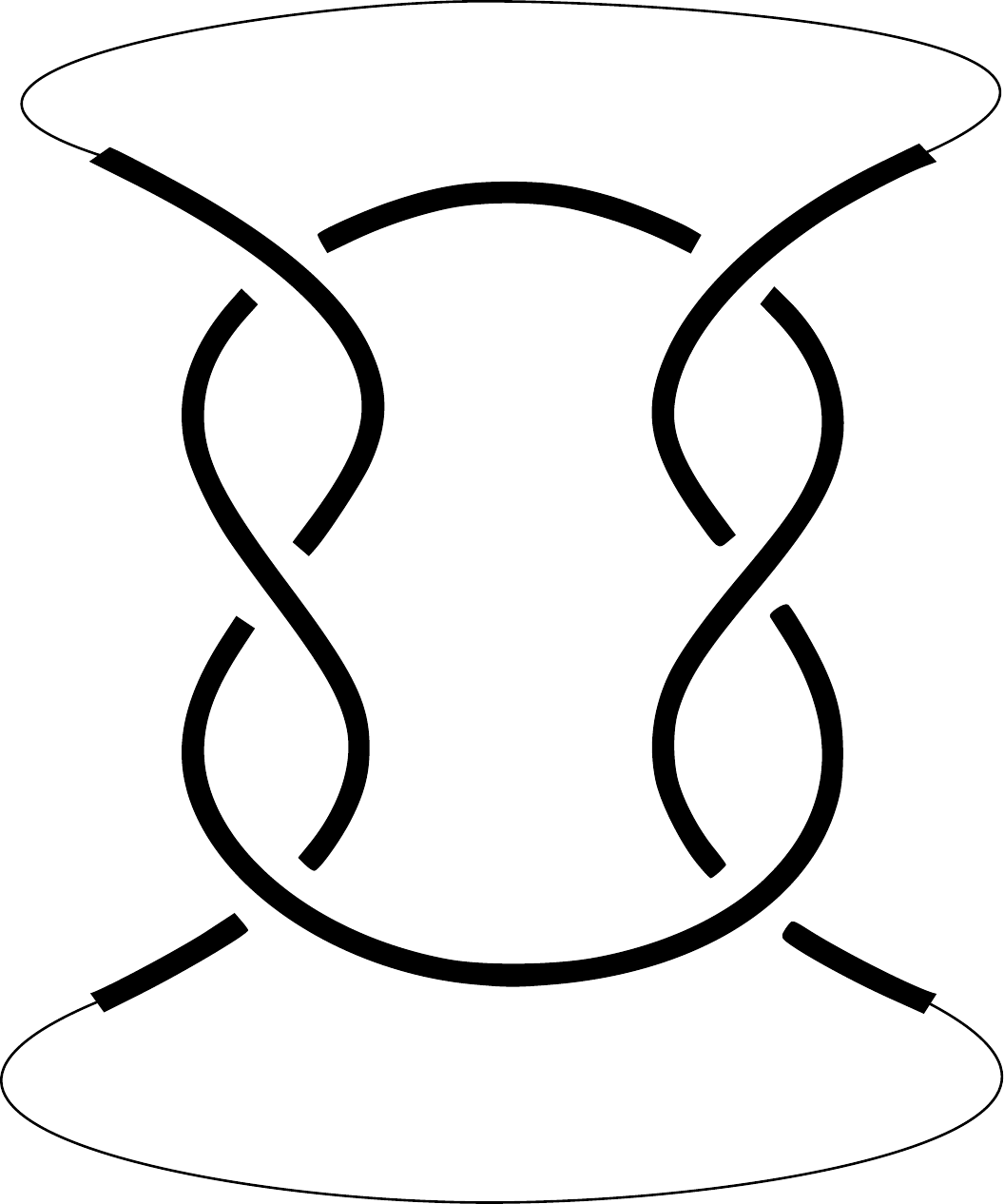}\,	
		\caption{The unlinking closure tangle of $6_3$.}
		\label{figure:unlinking}
	\end{figure}
	
	By Theorem \ref{theorem:split_unknot}, the tangles $5_1$, $6_1$, $7_2$, $7_5$, $7_7$ or $7_{14}$ are not unlinkable, since they are unknottable.
	
	The tangles $6_2$, $6_4$, $7_1$, $7_3$, $7_4$, $7_6$, $7_8$, $7_9$, $7_{10}$, $7_{11}$ and $7_{12}$ are algebraic, with the expression given by Tables \ref{table:algebraic} and \ref{table:algebraicII}, and a direct application of Theorems \ref{theorem:Montesinos} and \ref{corollary:algebraic} shows that these tangles are not unlinkable nor splittable.
	
	\begin{table}[ht]
		\begin{center}
			\renewcommand{\arraystretch}{2}
			\begin{tabular}{c|cp{5mm}c|c}
				Tangle & Algebraic expression&&Tangle & Algebraic expression\\
				\cline{1-2}  \cline{4-5}
				$6_2$ &$\tangle13+\tangle13$&&$7_{16}$ & $\left(\tangle13+\tangle13\right)*\tangle{-2}{1}$ \\

			\end{tabular}
		\end{center}
		\caption{Algebraic expressions of $6_2$ and $7_{16}$.}
		\label{table:algebraicII}
	\end{table}

	The tangles $7_{13}$, $7_{15}$, $7_{17}$ and  $7_{18}$ are $R_0$-monochromatic and have the coloring invariants of Table \ref{table:coloringinvariant}. Hence, from Corollary \ref{corollary: unlinking obstruction}, it remains to show that the corresponding rational tangles are not the unlinking or rational splitting closure tangles.
	
	\begin{table}[ht]
		\begin{center}
			\renewcommand{\arraystretch}{2}
			\begin{tabular}{c|cp{5mm}c|c}
				Tangle & Coloring invariant&&Tangle & Coloring invariant\\
				\cline{1-2}  \cline{4-5}
				$7_{13}$ &\tiny{$\dfrac{3}{4}$}&&$7_{17}$ &\tiny{$\dfrac{8}{7}$} \\
				$7_{15}$ &\tiny{$\dfrac{2}{3}$}&&$7_{18}$ & \tiny{$2$}\\
			\end{tabular}
		\end{center}
		\caption{Coloring invariants of $7_{13}$, $7_{15}$, $7_{17}$ and $7_{18}$.}
		\label{table:coloringinvariant}
	\end{table}
	
	The link $N(7_{13}+\tangle{-3}{4})$, as in Figure \ref{figure:not_splittable}, is not split since its Jones polynomial, $-t^{-3}+t^{-1}-t-t^5-t^7+t^9-t^{11} + t^{13}$, is different from the Jones polynomial of a split link defined by the right-handed trefoil and the unknot, $t^2+t^6-t^8$. 
	The link $N(7_{15}+\tangle{-2}{3})$, as in Figure \ref{figure:not_splittable}, is not split  since its Jones polynomial, 	$t^{-11}-t^{-9}-t^{-5}-t + t^3-t^5$, is different from the Jones polynomial of the two component unlink, $-t-t^{-1}$.  
	The links $N(7_{17}+\tangle{-8}{7})$ and  $N(7_{18}+[-2])$, as in Figure \ref{figure:not_splittable}, are not split since their linking number is not zero. Therefore, from Corollary \ref{corollary: unlinking obstruction}, the tangles $7_{13}, 7_{15}, 7_{17} \text{ and } 7_{18}$ are not splittable, and hence also not unlinkable (we can also conclude that the tangle $7_{13}$ is not unlinkable since one of its strings is knotted).	
	\begin{figure}[ht]
		\centering
		\includegraphics[scale=.08]{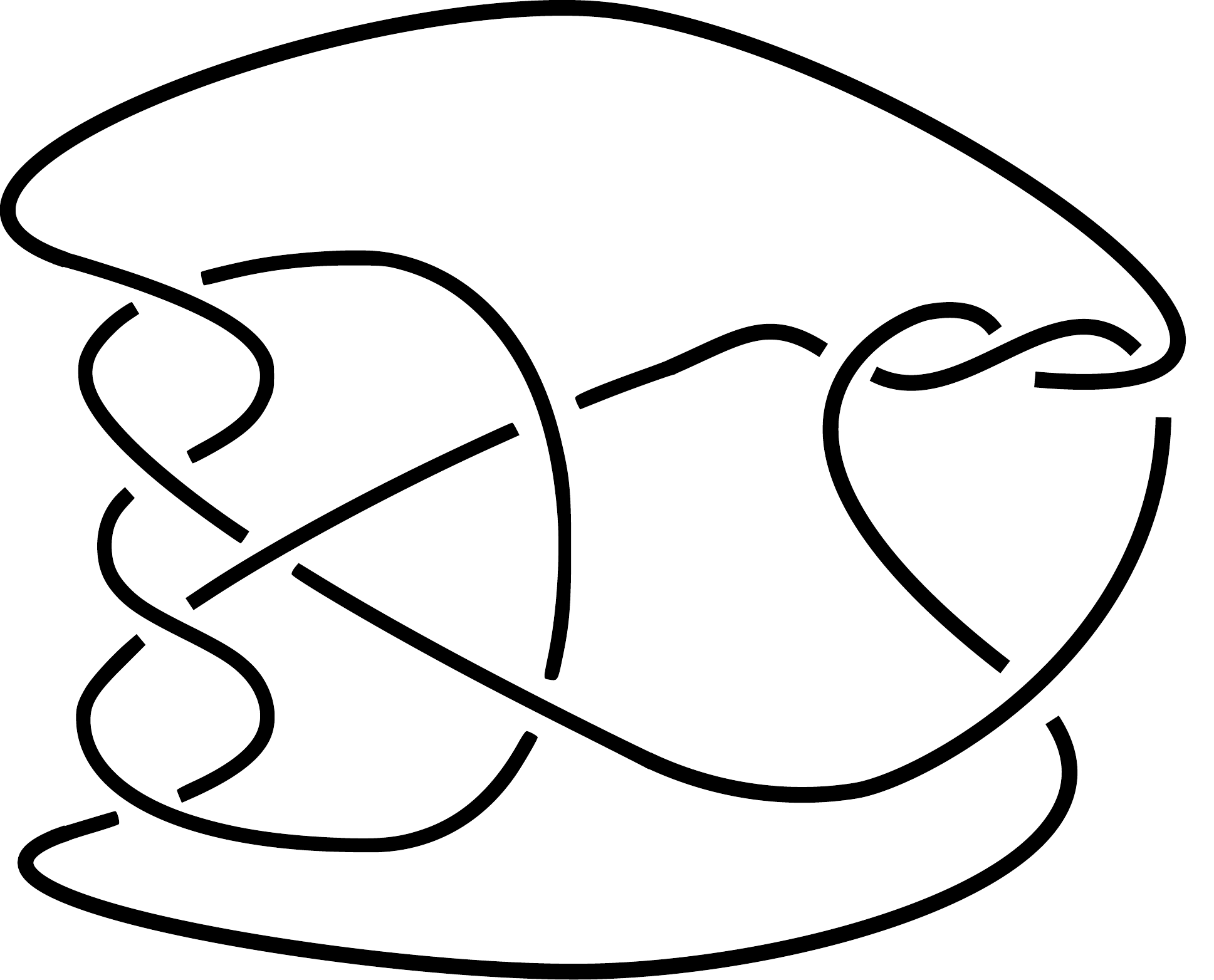}\qquad\includegraphics[scale=.08]{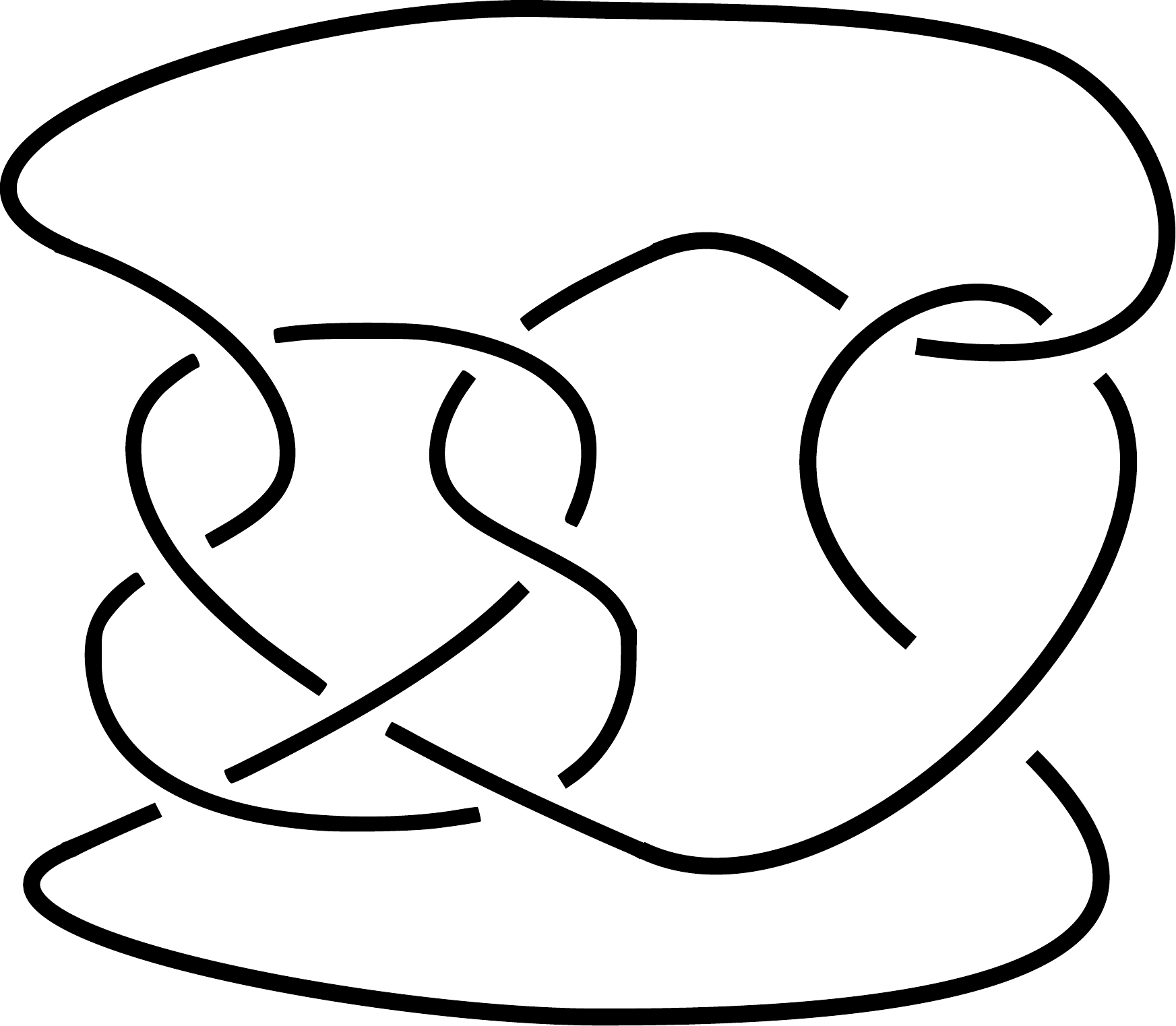}\qquad\includegraphics[scale=.08]{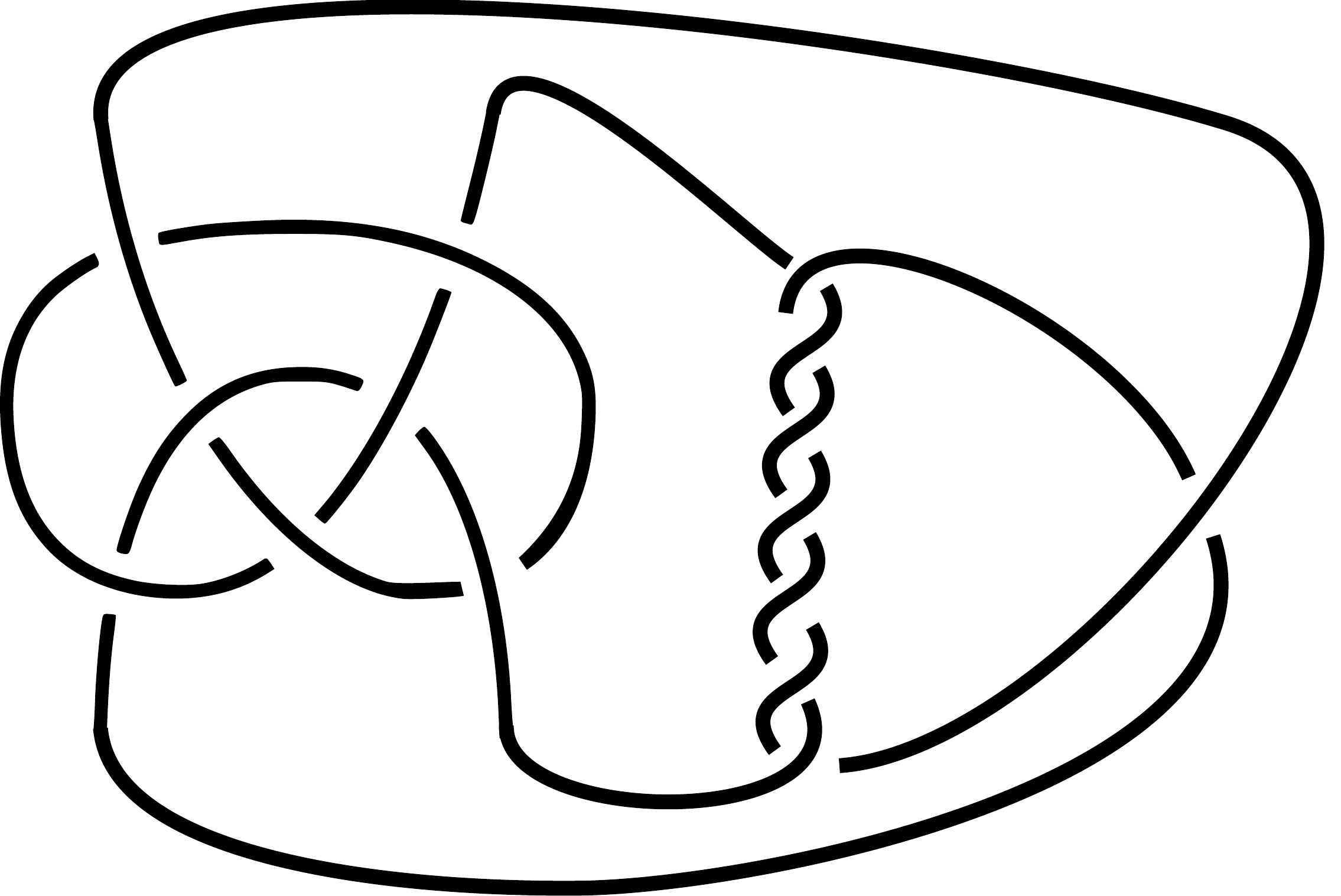}\qquad\includegraphics[scale=.08]{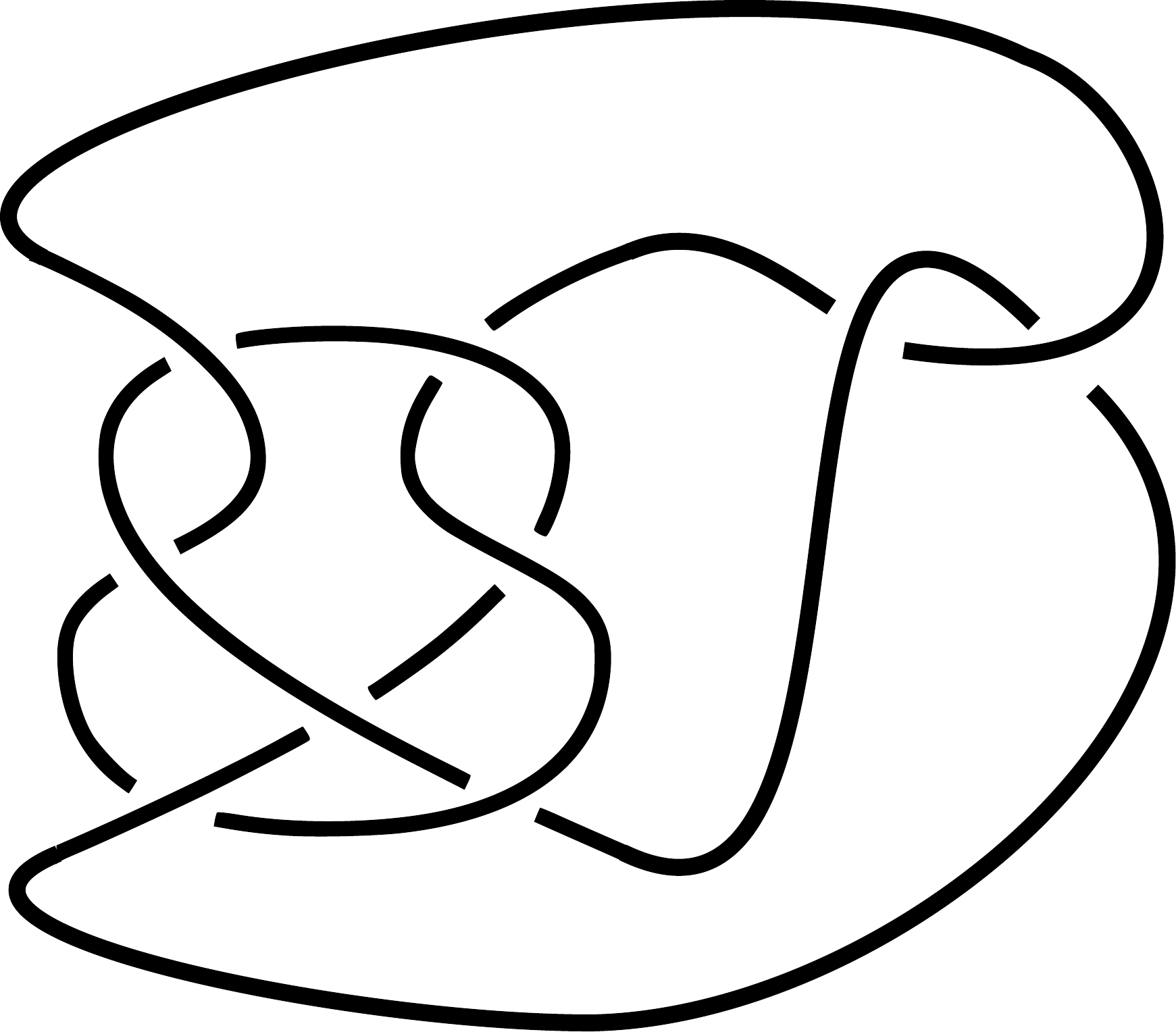}
		\caption{The links $N(7_{13}+\tangle{-3}{4})$, $N(7_{15}+\tangle{-2}{3})$, $N(7_{17}+\tangle{-8}{7})$, $N(7_{18}+[-2])$.}
		\label{figure:not_splittable}
	\end{figure}
\end{proof}

\vspace{.4cm}
\noindent
\address{\textsc{Department of Mathematics,\\
University of Coimbra}}\\
\email{\textit{E-mail:}\texttt{
		nogueira@mat.uc.pt}}

\vspace{.4cm}
\noindent
\address{\textsc{Department of Mathematics,\\
		University of Coimbra}}\\
\email{\textit{E-mail:}\texttt{
		ams@mat.uc.pt}}


\begin{thebibliography}{HD82}
	
	
	
	
	\normalsize
	\baselineskip=17pt
	
	

	\bibitem{BS-86} S. Bleiler, M. Scharlemann, \textit{Tangles, property P, and a problem of J. Martin}, Math. Ann. 273 (1986), pp. 215-225.
	\bibitem{BS-88} S. Bleiler, M. Scharlemann, \textit{A projective plane in $\mathbb{R}^4$ with three critical points is standard. Strongly invertible knots have property P}, Topology 27 No. 4 (1988), pp. 519-540.
	\bibitem{CR-89} T. Cochran, D. Ruberman, \textit{Invariants of tangles}, Math. Proc. Camb. Phil. Soc. 105 (1989), pp. 299-306.
	\bibitem{ES-90} C. Enrst, D. W. Sumners, \textit{A calculus for rational tangles: applications to DNA recombination}, Math. Proc. Camb. Phil. Soc. 108 (1990), pp. 489-515.
	\bibitem{EM-88} M. Eudave-Mu\~noz, \textit{Primeness and sums of tangles}, Trans. of Amer. Math. Soc. 306 No. 2 (1988), pp. 773-790.
	\bibitem{KSS} T. Kanenobu, H. Saito, S. Satoh, \textit{Tangles with up to seven crossings}, Interdisciplinary Information Sciences 9 No. 1 (2003), pp. 127-140.
	\bibitem{KL-04} L. Kauffman, S. Lambropoulou, \textit{On the classification of rational tangles}, Advances in Applied Mathematics 33 (2004), pp. 199-237. 
	\bibitem{KL-11} L. Kauffman, S. Lambropoulou, \textit{Hard unknots and collapsing tangles}, Series on Knots and Everything, Introductory Lectures on Knot Theory, World Scientific (2011), pp. 187-247. 
	\bibitem{KL-20} L. Kauffman, P. Lopes, \textit{The prevalence of persistent tangles}, Topopology and its Applications, 271 (2020), 107040, 12 pp.
	\bibitem{K} D. Krebes, \textit{An obstruction to embedding 4-tangles in links}, J. Knot Theory and its Ramifications 8 No. 03 (1999), pp. 321-352.
	\bibitem{KSW-99} D. Krebes, D. Silver, S. Williams, \textit{Persistent invariants of tangles},  J. of Knot Theory and Its Ramifications 9 No. 4 (2000), pp. 471-477. 
	\bibitem{M-73} J. M. Montesinos, \textit{Variedades de Seifert que son recubridores cíclicos ramificados de dos hojas}, Bol. Soc. Mat. Mexicana 18, No. 2 (1973), pp. 1-32.
	\bibitem{NS} J. M. Nogueira, A. Salgueiro, \textit{The minimum crossing number of essential tangles}, J. Knot Theory and its Ramifications 23 No. 10 (2014), 1450054, 15 pp.
	\bibitem{PSW-05}	J. H. Przytycki, D. D. Silver, S. G. Williams, \textit{3-manifolds, tangles and persistent invariants}, Math. Proc. Cambridge Philos. Soc. 139 (2005), pp. 291-306.
	\bibitem{R-00} D. Ruberman, \textit{Embedding tangles in links}, J. of Knot Theory and its Ramifications 9 No. 4 (2000), pp. 523-530. 
	\bibitem{S-85} M. Scharlemann, \textit{Smooth spheres in $\mathbb{R}^4$ with four critical points are standard}, Invent. Math. 79 (1985), pp. 125-141.
	\bibitem{SW-99} D. Silver, S. Williams, \textit{Virtual tangles and a theorem of Krebes}, J. of Knot Theory and Its Ramifications 8 No. 7 (1999), pp. 941-945.
	\bibitem{SW-18} D. Silver, S. Williams, \textit{Tangles and links: a view with trees}, J. of Knot Theory and Its Ramifications 27 No. 12 (2018), 1850061, 7 pp.
	\bibitem{T-08} S. Taylor, \textit{Boring split links and unknots}, Ph.D. Dissertation, University of California Santa Barbara, 2008.	
	
\end{thebibliography}
\end{document}